\newtheorem{theorem}[equation]{Theorem}
\newtheorem{lemma}[equation]{Lemma}
\newtheorem{proposition}[equation]{Proposition}
\def\sideremark#1{\ifvmode\leavevmode\fi\vadjust{\vbox to0pt{\vss
 \hbox to 0pt{\hskip\hsize\hskip1em
\vbox{\hsize2cm\tiny\raggedright\pretolerance10000 
 \noindent #1\hfill}\hss}\vbox to8pt{\vfil}\vss}}} 
\def\Ad{{\rm Ad}\,}
\def\ad{{\rm ad}\,}
\begin{document}


\title{Killing Vector Fields of Constant Length on Riemannian Normal
        Homogeneous Spaces}

\author{Ming Xu\footnote{Address: College of Mathematics,
        Tianjin Normal University,
        Tianjin 300387, P.R.China; e--mail: {\tt mgmgmgxu@163.com}.
        Research supported by NSFC no. 11271216, State Scholarship
        Fund of CSC (no. 201408120020), Science and Technology Development
Fund for Universities and Colleges in Tianjin
(no. 20141005), Doctor fund of Tianjin Normal
        University (no. 52XB1305).}\,\, \&
        Joseph A. Wolf\footnote{ Corresponding author.
        Address: Department of Mathematics, University of California, Berkeley,
        CA 94720--3840; e--mail: {\tt jawolf@math.berkeley.edu}.
        Research partially supported by a Simons Foundation grant and by the
        Dickson Emeriti Professorship at the University of California,
        Berkeley.}}

\date{}

\maketitle

\begin{abstract}
Killing vector fields of constant length correspond to isometries of
constant displacement.  Those in turn have been used to study homogeneity
of Riemannian and Finsler quotient manifolds.  Almost all of that work 
has been done for group manifolds or, more generally, for symmetric
spaces.  This paper extends the scope of research on constant length
Killing vector fields to a class of Riemannian normal homogeneous spaces.
\end{abstract}

\section{Introduction}
\setcounter{equation}{0}
An isometry $\rho$ of a metric space $(M,d)$ is called
Clifford--Wolf (CW) if
it moves each point the same distance, i.e. if the displacement function
$\delta(x) = d(x,\rho(x))$ is constant.  W. K. Clifford \cite{C1873}
described such isometries for the $3$--sphere, and G. Vincent
\cite{V1947} used the term {\em Clifford translation} for constant
displacement isometries of round spheres in his study of spherical
space forms $S^n/\Gamma$ with $\Gamma$ metabelian.
Later J. A. Wolf (\cite{W1960}, \cite{W1961a}, \cite{W1961b}) extended the
use of the term {\em Clifford translation} to the context of metric spaces,
especially Riemannian symmetric spaces.  There the point is his theorem
\cite{W1962b} that a complete locally
symmetric Riemannian manifold $M$ is homogeneous if and only if, in the
universal cover $\widetilde{M} \to M = \Gamma \backslash \widetilde{M}$,
the covering group $\Gamma$ consists of Clifford translations.  In part Wolf's
argument was case by case, but later V. Ozols (\cite{O1969},\cite{O1973},
\cite{O1974b}) gave a general argument for the
situation where $\Gamma$ is a cyclic subgroup of the identity component
$I^0(\widetilde{M})$ of the isometry group  $I(\widetilde{M})$.
H. Freudenthal \cite{F1963} discussed the situation where
$\Gamma \subset I^0(\widetilde{M})$, and introduced the term
{\em Clifford--Wolf isometry} (CW) for isometries of constant displacement.
That seems to be the term in general usage.
More recently, the result \cite{W1962b} for locally symmetric homogeneous
Riemannian manifolds was extended to Finsler manifolds by S. Deng and
J. A. Wolf \cite{DW2012}.
\smallskip

In the setting of non--positive sectional curvature, isometries of bounded
displacement are already CW \cite{W1964}.
Further there has been some work relating CW and homogeneity
for pseudo--Riemannian manifolds (\cite{W1961b},\cite{W1962a}).
\smallskip

Recently, V.~N.~Berestovskii and Yu.~G.~Nikonorov classified
all simply connected Riemannian homogeneous spaces such that the homogeneity
can be achieved by CW translations, i.e. CW homogeneous spaces (\cite{BN2008a},
\cite{BN2008b}, \cite{BN2009}). Also, S.~Deng and M.~Xu studied CW
isometries and CW homogeneous spaces in Finsler geometry (\cite{DX2012},
\cite{DX2013a}, \cite{DX2013b}, \cite{DX2013c}, \cite{DX2014a}, \cite{DX2014b}).
\smallskip

Most of the research on CW translations has been concerned with Riemannian
(and later Finsler) symmetric spaces.  There we have a full understanding
of CW translations (\cite{W1962b} and \cite{DW2012}), but little is known about
CW translations on non-symmetric homogeneous Riemannian spaces. For example,
there are not many examples of Clifford-Wolf translations on Riemannian
normal homogeneous space $G/H$ with $G$ compact simple, except the those
found on Riemannian symmetric spaces and in some closely related settings
(see \cite{DMW1986} and \cite{D1983}).
\smallskip

The infinitesimal version, apparently introduced by
V. N. Berestovskii and Yu. G. Nikonorov, is that of Killing vector fields
of constant length.  We will refer to those Killing vector fields as
{\em Clifford--Killing} vector fields or CK vector fields.  They correspond
(at least locally) to one parameter local groups of CW isometries.  The purpose
of this work is to study, and classify all CK vector fields on Riemannian
normal homogeneous spaces $M=G/H$.
\smallskip

We recall the general definition of Riemannian normal homogeneous spaces.
Let $G$ be a connected Lie group and $H$ a compact subgroup,
such that $M=G/H$ carries
a $G$--invariant Riemannian metric.  Thus the Lie algebra $\mathfrak{g}$
has an $\Ad(H)$--invariant direct sum decomposition
$\mathfrak{g}=\mathfrak{h} + \mathfrak{m}$ where the natural projection $\pi: G \to G/H$
maps $\mathfrak{m}$ onto the tangent space at the base point $o = \pi(\mathbf{e})$,
and the Riemannian metric corresponds to a positive definite inner product
$\langle \cdot \,, \cdot \rangle_\mathfrak{m}$ on $\mathfrak{m}$.
The Riemannian manifold $M$ is called {\em naturally reductive} if the
$\Ad(H)$--invariant decomposition $\mathfrak{g} = \mathfrak{h} + \mathfrak{m}$
can be chosen so that
$$
\langle \mathfrak{pr}_{\mathfrak{m}}[u,v],w\rangle_\mathfrak{m}
 + \langle v, \mathfrak{pr}_{\mathfrak{m}}[u,w]\rangle_\mathfrak{m} = 0
\text{ for all } u,v,w \in \mathfrak{m}
$$
where $\mathfrak{pr}_{\mathfrak{m}}: \mathfrak{g} \to \mathfrak{m}$
is the projection with kernel $\mathfrak{h}$.  When $\langle \cdot \,, \cdot \rangle_\mathfrak{m}$ is
the restriction of an $\Ad(G)$--invariant nondegenerate symmetric bilinear form $\langle \cdot \,, \cdot \rangle$ on $\mathfrak{g}$ such that
$\mathfrak{h} \perp \mathfrak{m}$,\,
$M = G/H$ is a {\em Riemannian normal homogeneous space}.
In this general definition, Riemannian normal homogeneous space is viewed as a
generalization of Riemannian symmetric space, including the non-compact type.
If we expect the Riemannian normal homogeneous space to be related to Riemannian isometric
submersions, there is another definition of Riemannian normal homogeneous space, which
require $\langle\cdot,\cdot\rangle_\mathfrak{m}$ is the restriction of
an $\Ad(G)$-invariant (bi-invariant) inner product on $\mathfrak{g}$. In this definition,
$\mathfrak{g}$ must be compact, i.e. $G$ is quasi-compact, or equivalently
the universal cover of $G$ is the product of a compact semi-simple Lie group and an Euclidean
space (which can be 0).
Note that, in both definition, the normal homogeneous metric on $M$ depends on $G$.
\smallskip

In this work, we will only consider the special case that $G$ is a \
compact connected simple Lie group.  Our main result is

\begin{theorem} \label{main}
Let $G$ be a compact connected simple Lie group and $H$ a closed subgroup with
$0<\dim H<\dim G$.  Fix a normal Riemannian metric on $M = G/H$ . Suppose
that there is a nonzero vector $v\in\mathfrak{g}$
defining a CK vector field on $M = G/H$. Then $M$ is a complete
locally symmetric Riemannian manifold, and its universal Riemannian cover is
an odd dimensional sphere of constant curvature or a Riemannian symmetric
space $SU(2n)/Sp(n)$.
\end{theorem}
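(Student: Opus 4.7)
My strategy is to recast the CK condition as an algebraic constraint on $v\in\mathfrak{g}$, extract its first-order consequences via differentiation along the adjoint orbit, and then invoke the classification of compact simple Lie algebras. Because $G$ acts transitively by isometries, transporting the value $X_v(gH)$ back to the base point $o$ by the isometry induced by $g^{-1}$ identifies it with $\mathfrak{pr}_\mathfrak{m}(\Ad(g^{-1})v)\in\mathfrak{m}$. The inner product on $\mathfrak{m}$ is the restriction of the $\Ad(G)$-invariant form $\langle\cdot,\cdot\rangle$ on $\mathfrak{g}$, with $\mathfrak{h}\perp\mathfrak{m}$, so $\|\Ad(g)v\|^2$ is constant in $g$; hence the CK condition is equivalent to
\begin{equation*}
\|\mathfrak{pr}_\mathfrak{h}(\Ad(g)v)\|^2 = \|v_\mathfrak{h}\|^2 \qquad \text{for every }g\in G,
\end{equation*}
i.e.\ to the constancy of $w\mapsto\|w_\mathfrak{h}\|^2$ along the adjoint orbit $\mathcal{O}=\Ad(G)v\subset\mathfrak{g}$.

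Differentiating this identity along $g(t)=\exp(tu)\cdot g_0$, with $w=\Ad(g_0)v$ arbitrary in $\mathcal{O}$, and using Ad-invariance of $\langle\cdot,\cdot\rangle$, reduces the first-order condition to $\langle[w_\mathfrak{h},w],u\rangle=0$ for every $u\in\mathfrak{g}$, and hence to the pointwise relation $[w_\mathfrak{h},w_\mathfrak{m}]=0$ holding throughout $\mathcal{O}$. Simplicity of $\mathfrak{g}$ excludes $v_\mathfrak{h}=0$ (otherwise $\Ad(G)v\subset\mathfrak{m}$ would force $\mathfrak{m}=\mathfrak{g}$ and $\mathfrak{h}=0$), and $X_v\not\equiv 0$ excludes $v_\mathfrak{m}=0$, so both components are nonzero. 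The relation $[v_\mathfrak{h},v_\mathfrak{m}]=0$ then gives $\exp(tv)=\exp(tv_\mathfrak{m})\exp(tv_\mathfrak{h})$; since $v_\mathfrak{h}\in\mathfrak{h}$ fixes $o$, the integral curve $t\mapsto\exp(tv)\cdot o$ of $X_v$ coincides with the curve $t\mapsto\exp(tv_\mathfrak{m})\cdot o$, which is a geodesic for the naturally reductive metric. The analogous factorization holds at every point after group translation, so each integral curve of $X_v$ is a geodesic of $M$ of speed $\|v_\mathfrak{m}\|$.

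The remaining content of the theorem is a classification. The condition $[w_\mathfrak{h},w_\mathfrak{m}]=0$ on every $w\in\mathcal{O}$, combined with constancy of $\|w_\mathfrak{h}\|$, is highly rigid: the nonlinear map $\mathfrak{pr}_\mathfrak{h}\colon\mathcal{O}\to\mathfrak{h}$ lands in a single sphere, and at each point the image commutes with the corresponding $\mathfrak{m}$-component. Pushing this further — by taking higher-order derivatives of $\|w_\mathfrak{h}\|^2$, by examining the $\Ad(H)$-isotypic decomposition of $\mathfrak{m}$, and by using the centralizer structure of $v_\mathfrak{h}$ — should force $(\mathfrak{g},\mathfrak{h})$ into a finite list. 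A case-by-case walk through the compact simple Lie algebras using their root data should then show that the only surviving pairs are those whose normal homogeneous quotient is covered by an odd-dimensional round sphere or by $SU(2n)/Sp(n)$; in each surviving case, direct inspection of the isotropy representation yields $[\mathfrak{m},\mathfrak{m}]\subset\mathfrak{h}$ after covering, and thus local symmetry. The main obstacle is this classification step: the first-order reduction is quick and clean, but translating $[w_\mathfrak{h},w_\mathfrak{m}]=0$ on the entire orbit into a tractable combinatorial criterion on root systems and then systematically eliminating every simple pair outside the two stated families is where the genuine work lies.
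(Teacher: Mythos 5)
Your opening reduction is correct and matches the paper's starting point: since $\|\mathrm{Ad}(g)v\|$ is constant, the CK condition is equivalent to constancy of $\|\mathrm{pr}_{\mathfrak{h}}(\mathrm{Ad}(g)v)\|$ along the adjoint orbit, and simplicity of $\mathfrak{g}$ rules out either projection vanishing identically. Your first-order consequence $[w_\mathfrak{h},w_\mathfrak{m}]=0$ for all $w\in\mathrm{Ad}(G)v$ is also correctly derived (and, since the orbit is connected, is in fact equivalent to the CK condition), and the geodesic remark is fine but plays no role in the theorem.

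The genuine gap is everything after that. The statement to be proved is a classification, and your proposal defers it entirely: phrases like ``should force $(\mathfrak{g},\mathfrak{h})$ into a finite list'' and ``a case-by-case walk \dots should then show'' stand in for what is, in the paper, the whole proof --- roughly Sections 3 through 8. You give no mechanism for turning the orbit-wide condition $[w_\mathfrak{h},w_\mathfrak{m}]=0$ into a usable combinatorial constraint, no argument pinning down the admissible vectors $v$, no determination of the admissible subalgebras $\mathfrak{h}$, and no verification of local symmetry or identification of the universal covers. For comparison, the paper makes the problem combinatorial by a different device: it conjugates $v$ into a Cartan subalgebra $\mathfrak{t}$ compatible with $\mathfrak{h}$ (so $\mathfrak{t}=\mathfrak{t}\cap\mathfrak{h}+\mathfrak{t}\cap\mathfrak{m}$), exploits constancy of $\|\mathrm{pr}_{\mathfrak{h}}(\rho(v))\|$ over the Weyl group to prove the key orthogonality lemma for projections of orthogonal roots (Lemma \ref{lemma-3-1}), deduces that $v$ must be one of a very short list of singular vectors in each classical type (Proposition \ref{prop-3-2}), and then, with the subgroup-reduction Lemma \ref{trivial-lemma} and averaging identities like Lemma \ref{lemma-3-3}, runs a long case-by-case analysis over all simple types to eliminate the exceptional and $\mathfrak{c}_n$ cases, pin $\mathfrak{h}$ down in the surviving cases, and check $[\mathfrak{m},\mathfrak{m}]\subset\mathfrak{h}$ there. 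None of that work, nor any substitute for it, is present in your proposal, so as it stands it is a plan rather than a proof of the theorem.
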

\smallskip

It is obvious to see that when $\dim H=\dim G$, $M$ is reduced to a single
point, and when $\dim H=0$,
$M$ is locally Riemannian symmetric because it is covered by $G$ with the
bi-invariant Riemannian metric.

Riemannian normal homogeneity is a much weaker condition than Riemannian
symmetry
or locally Riemannian symmetric homogeneity. Even in the case where $G$ is a
compact connected simple 
Lie group, every smooth coset space $G/H$ has at least one invariant
normal Riemannian metric, while of course the list of Riemannian symmetric
spaces $G/H$ is rather short.  But Theorem \ref{main} provides the same 
classification result for CK vector fields.  It suggests that the existence 
of nontrivial CK vector fields and CW translations
will impose very stronger restrictions on a Riemannian homogeneous space,
at least when that space is Riemannian normal homogeneous.
\smallskip

On the other hand, we do not have have comprehensive results when $G$ is of 
non-compact type. When $G$ is compact but not simple,
generally speaking, a Riemannian normal homogeneous space $M$
does not have a perfect local decomposition into symmetric spaces.
Thus the study of CW translations and
CK vector fields in this situation is still open.
\smallskip

The proof of the Theorem \ref{main} is organized as follows. In Section 2, 
we summarize the notations and preliminaries for the Riemannian normal 
homogeneous spaces we will consider. In Section 3, we present some
preliminary lemmas, study the CK vector fields at the Cartan subalgebra 
level, and prove Theorem \ref{main} in the easiest situations. In 
Section 4, we prove the Theorem \ref{main} when $G$ is an exceptional 
Lie group. From Section 5 to Section 8, we prove the Theorem
\ref{main} when $G$ is a classical Lie group, i.e. of type 
$\mathfrak{a}_n$, $\mathfrak{b}_n$,
$\mathfrak{c}_n$ or $\mathfrak{d}_n$\,.
\smallskip

The first author thanks the Department of Mathematics at the University
of California, Berkeley, for hospitality during the preparation of this
paper.

\section{Notations about normal homogeneous spaces}
\setcounter{equation}{0}
Let $G$ be a compact connected simple Lie group and $H$ a closed subgroup
with $0<\dim H<\dim G$. We denote $\mathfrak{g}=\mathrm{Lie}(G)$ and
$\mathfrak{h}=\mathrm{Lie}(H)$. Fix a bi-invariant inner product
$\langle\cdot,\cdot\rangle$ of $\mathfrak{g}$. It defines
a decomposition $\mathfrak{g}=\mathfrak{h}+\mathfrak{m}$ with
$\mathfrak{h}\perp\mathfrak{m}$ and
$[\mathfrak{h},\mathfrak{m}]\subset\mathfrak{m}$.
The orthogonal projection to each factor is denoted
$\mathrm{pr}_{\mathfrak{h}}$ or $\mathfrak{pr}_{\mathfrak{m}}$ respectively.
We naturally identify $\mathfrak{m}$ with the tangent space $T_o(G/H)$ at
$o=\pi(\mathbf{e})$ where $\pi : G \to G/H$ is the usual projection.
The restriction of $\langle\cdot,\cdot\rangle$ to $\mathfrak{m}$
is $\mathrm{Ad}(H)$-invariant and defines a $G$-invariant Riemannian metric
on $M=G/H$. A Riemannian metric defined in this way is called a
{\em normal homogeneous metric}, and $M$ together with a normal
homogeneous metric is a {\em Riemannian normal homogeneous space}.  Note
the dependence on $G$.  Here $G$ is simple so
the normal homogeneous metric on $G/H$ is unique up to scalar multiplications.
\smallskip

Any Cartan subalgebra of $\mathfrak{h}$ can be expanded to a Cartan subalgebra
$\mathfrak{t}$ of $\mathfrak{g}$ such that
$\mathfrak{t}=\mathfrak{t}\cap\mathfrak{h}+\mathfrak{t}\cap\mathfrak{m}$.
As any two Cartan subalgebras of $\mathfrak{g}$ are conjugate
we can assume $\mathfrak{t}$ is the standard one.
For example when $\mathfrak{g}=\mathfrak{su}(n+1)$, $\mathfrak{t}$ is
the subalgebra of all diagonal matrices.
The standard $\mathfrak{u}(n)\hookrightarrow \mathfrak{so}(2n)$
comes from $a+b\sqrt{-1}\mapsto \left( \begin{smallmatrix}
a & b \\ -b & a \\ \end{smallmatrix} \right)$
for $a,b \in \mathbb{R}$.  We view $\mathfrak{sp}(n)$
as the space of all skew--Hermitian skew $n\times n$ quaternion matrices
where $q \mapsto \overline{q}$ is the usual conjugation of $\mathbb{H}
= \mathbb{R} + \mathbb{R}\mathbf{i} + \mathbb{R}\mathbf{j} +
\mathbb{R}\mathbf{k}$ over $\mathbb{R}$.  Then
$\mathfrak{u}(n)\subset\mathfrak{sp}(n)$ when we identify
$\sqrt{-1}$ with $\mathbf{i}$.  With these descriptions
the space of diagonal matrices in $\mathfrak{u}(n)$ also provides the
standard Cartan subalgebra $\mathfrak{t}$ for the other classical compact
simple Lie algebras. The standard Cartan subalgebra of $\mathfrak{so}(2n)$
can also be viewed as that for $\mathfrak{so}(2n+1)$ with $\mathfrak{so}(2n)$
identified with the block at the right down corner.
\smallskip

Let $\Delta=\Delta(\mathfrak{g},\mathfrak{t})$ be the root system of
$\mathfrak{g}$, and $\Delta^+$
be any positive root system in $\Delta$. Because of the bi-invariant inner
product on $\mathfrak{g}$, the roots of $\mathfrak{g}$ can be viewed as
elements of $\mathfrak{t}$ instead of $\mathfrak{t}^*$.
We have the standard decomposition of $\mathfrak{g}$,
\begin{equation}\label{decomp-1}
\mathfrak{g}=\mathfrak{t}+{\sum}_{\alpha\in\Delta^+}\mathfrak{g}_{\pm\alpha},
\end{equation}
in which each $\mathfrak{g}_{\pm\alpha}$ is the real two dimensional root plane
$(\mathfrak{g}^C_\alpha + \mathfrak{g}^C_{-\alpha})\cap \mathfrak{g}$.
Considering the subalgebra $\mathfrak{h}$, we have another decomposition of
$\mathfrak{g}$:
\begin{equation}\label{decomp-2}
\mathfrak{g}=
	\mathfrak{t}+{\sum}_{\alpha'\in\mathrm{pr}_{\mathfrak{h}}(\Delta^+)}\,
	\widehat{\mathfrak{g}}_{\pm\alpha'}
\text{ where } \widehat{\mathfrak{g}}_{\pm\alpha'}=
	{\sum}_{\alpha\in\Delta^+,\mathrm{pr}_{\mathfrak{h}}(\pm\alpha)=
	\pm\alpha'}\,\,\mathfrak{g}_{\pm\alpha}\,.
\end{equation}
Both (\ref{decomp-1}) and (\ref{decomp-2}) are orthogonal decompositions.
More importantly, we have orthogonal decompositions
\begin{eqnarray}
\mathfrak{t}&=&(\mathfrak{t}\cap\mathfrak{h})+(\mathfrak{t}\cap\mathfrak{m})
\mbox{ and}\label{decomp-3}\\
\widehat{\mathfrak{g}}_{\pm\alpha'}&=&(\widehat{\mathfrak{g}}_{\pm\alpha'}\cap\mathfrak{h})+
(\widehat{\mathfrak{g}}_{\pm\alpha'}\cap\mathfrak{m}),\label{decomp-4}
\end{eqnarray}
in which the summands are equal to the images of the projection maps
$\mathrm{pr}_{\mathfrak{h}}$ and $\mathrm{pr}_{\mathfrak{m}}$.
\smallskip

Let $\Delta'=\Delta(\mathfrak{h},\mathfrak{t}\cap\mathfrak{h})$
denote the root system of $\mathfrak{h}$ and choose a positive subsystem
$\Delta'^+\subset\Delta'$. The restriction of the bi-invariant inner product
of $\mathfrak{g}$ to $\mathfrak{h}$ is a bi-invariant inner product there,
and $\Delta'$ can be viewed as a subset
of $\mathfrak{t}\cap\mathfrak{h}$. For each root $\alpha'\in\Delta'^+$,
the two dimensional root plane $\widehat{\mathfrak{h}}_{\pm\alpha'}$ is just
the factor $\widehat{\mathfrak{g}}_{\pm\alpha'}\cap\mathfrak{h}$ in
(\ref{decomp-4}).
\smallskip

For each simple Lie algebra $\mathfrak{g}$ we recall the Bourbaki
description of the root system $\Delta^+$, and the root planes in the
classical cases.
\smallskip

(1) The case $\mathfrak{g}=\mathfrak{a}_n=\mathfrak{su}(n+1)$ for $n>0$.
Let $\{e_1,\ldots,e_{n+1}\}$ denote the standard orthonormal basis of
$\mathbb{R}^{n+1}$\,.  Then $\mathfrak{t}$ can be isometrically identified with
the subspace $(e_1+\cdots+e_{n+1})^\perp \subset \mathbb{R}^{n+1}$. The
root system $\Delta$ is
\begin{equation}\label{root-system-A-n}
\{\pm(e_i-e_j) \mid 1\leqq i < j\leqq n+1\}.
\end{equation}
Let $E_{i,j}$ be the matrix with all zeros except for
a $1$ in the $(i,j)$ place.  Then
\begin{eqnarray*}
e_i&=&\sqrt{-1}E_{i,i}\in\mathfrak{su}(n+1), \mbox{ and} \\
\mathfrak{g}_{\pm(e_i-e_j)}&=&\mathbb{R}(E_{i,j}-E_{j,i})+\mathbb{R}\sqrt{-1}(E_{i,j}+E_{j,i}).
\end{eqnarray*}
\smallskip

(2) The case $\mathfrak{g}=\mathfrak{b}_n=\mathfrak{so}(2n+1)$ for $n>1$.
The Cartan subalgebra $\mathfrak{t}$ can be isometrically identified with
$\mathbb{R}^n$ with the standard orthonormal basis $\{e_1,\ldots,e_n\}$.
The root system $\Delta$ is
\begin{equation}\label{root-system-B-n}
\{\pm e_i \mid 1\leqq i\leqq n\} \cup
	\{\pm e_i\pm e_j \mid 1\leqq i<j\leqq n\}.
\end{equation}
Using matrices, we have
\begin{eqnarray*}
e_i&=&E_{2i,2i+1}-E_{2i+1,2i}, \\
\mathfrak{g}_{\pm e_i}&=&\mathbb{R}(E_{2i,1}-E_{1,2i})+\mathbb{R}(E_{2i+1,1}-E_{1,2i+1}),\\
\mathfrak{g}_{\pm(e_i-e_j)}&=&\mathbb{R}(E_{2i,2j}+E_{2i+1,2j+1}
-E_{2j,2i}-E_{2j+1,2i+1})\\
&&\phantom{X}+\mathbb{R}(E_{2i,2j+1}-E_{2i+1,2j}+E_{2j,2i+1}-E_{2j+1,2i}),
\mbox{ and}\\
\mathfrak{g}_{\pm(e_i+e_j)}&=&
\mathbb{R}(E_{2i,2j}-E_{2i+1,2j+1}-E_{2j,2i}+E_{2j+1,2i+1})\\
&&\phantom{X}+\mathbb{R}(E_{2i,2j+1}+E_{2i+1,2j}-E_{2j,2i+1}-E_{2j+1,2i}).
\end{eqnarray*}
\smallskip

(3) The case $\mathfrak{g}=\mathfrak{c}_n=\mathfrak{sp}(n)$ for $n>2$.
As before $\mathfrak{t}$ is isometrically identified with
$\mathbb{R}^n$ with the standard orthonormal basis $\{e_1,\ldots,e_n\}$.
The root system $\Delta$ is
\begin{equation}\label{root-system-C-n}
\{\pm 2e_i \mid 1\leqq i\leqq n\} \cup
	\{\pm e_i\pm e_j \mid 1\leqq i<j\leqq n\}.
\end{equation}
Using matrices, we have
\begin{eqnarray*}
e_i &=&\mathbf{i}E_{i,i},\\
\mathfrak{g}_{\pm 2e_i}&=&\mathbb{R}\mathbf{j}E_{i,i}+\mathbb{R}\mathbf{k}E_{i,i},\\
\mathfrak{g}_{\pm(e_i-e_j)}&=&\mathbb{R}(E_{i,j}-E_{j,i})+\mathbb{R}\mathbf{i}(E_{i,j}+E_{j,i}),\mbox{ and}\\
\mathfrak{g}_{\pm(e_i+e_j)}&=&\mathbb{R}\mathbf{j}(E_{i,j}+E_{j,i})+
\mathbb{R}\mathbf{k}(E_{i,j}+E_{j,i}).
\end{eqnarray*}
\smallskip

(4) The case $\mathfrak{g}=\mathfrak{d}_n=\mathfrak{so}(2n)$ for $n>3$. The
Cartan subalgebra $\mathfrak{t}$ is identified with
$\mathbb{R}^n$ with the standard orthonormal basis $\{e_1,\ldots,e_n\}$.
The root system $\Delta$ is
\begin{equation}\label{root-system-D-n}
\{\pm e_i\pm e_j \mid 1\leqq i<j\leqq n\}.
\end{equation}
In matrices, we have formulas for the $e_i$ and for the root planes for
$e_i\pm e_j$ similar to those in the case of $\mathfrak{b}_n$, i.e.
\begin{eqnarray*}
e_i&=&E_{2i-1,2i}-E_{2i,2i-1}, \\
\mathfrak{g}_{\pm(e_i-e_j)}&=&\mathbb{R}(E_{2i-1,2j-1}+E_{2i,2j}
-E_{2j-1,2i-1}-E_{2j,2i})\\
&&\phantom{X}+\mathbb{R}(E_{2i-1,2j}-E_{2i,2j-1}+E_{2j-1,2i}-E_{2j,2i-1}),
\mbox{ and}\\
\mathfrak{g}_{\pm(e_i+e_j)}&=&
\mathbb{R}(E_{2i-1,2j-1}-E_{2i,2j}-E_{2j-1,2i-1}+E_{2j,2i})\\
&&\phantom{X}+\mathbb{R}(E_{2i-1,2j}+E_{2i,2j-1}-E_{2j-1,2i}-E_{2j,2i-1}).
\end{eqnarray*}
\smallskip

(5) The case $\mathfrak{g}=\mathfrak{e}_6$.
The Cartan subalgebra $\mathfrak{t}$ can be isometrically identified with
$\mathbb{R}^6$ with the standard orthonormal basis $\{e_1,\ldots,e_6\}$.
The root system is
\begin{equation}\label{root-system-E-6}
\{\pm e_i\pm e_j \mid 1\leqq i<j\leqq 5\}
\cup \{\pm\tfrac12 e_1\pm\cdots\pm\tfrac12 e_5\pm\tfrac{\sqrt{3}}{2}e_6
	\mbox{ with odd number of +'s}\}.
\end{equation}
It contains a root system of type
$\mathfrak{d}_5$.
\smallskip

(6) The case $\mathfrak{g}=\mathfrak{e}_7$. The Cartan subalgebra
can be isometrically identified with
$\mathbb{R}^7$ with the standard orthonormal basis $\{e_1,\ldots,e_7\}$.
The root system is
\begin{eqnarray}\label{root-system-E-7}
& &\{\pm e_i\pm e_j \mid 1\leqq i<j<7\} \cup \{\pm\sqrt{2}e_7;
\tfrac12(\pm e_1\pm\cdots\pm e_6\pm\sqrt{2}e_7)\nonumber\\
& &\mbox{  with an odd number of plus signs among the first six coefficients}\}.
\end{eqnarray}
It contains a root system of $\mathfrak{d}_6$.
\smallskip

(7) The case $\mathfrak{g}=\mathfrak{e}_8$. The Cartan subalgebra
can be isometrically identified with
$\mathbb{R}^8$ with the standard orthonormal basis $\{e_1,\ldots,e_8\}$.
The root system $\Delta$ is
\begin{eqnarray}\label{root-system-E-8}
&&\{\pm e_i\pm e_j \mid 1\leqq i<j\leqq 8\} \cup\nonumber\\
&&\{\tfrac12(\pm e_1\pm\cdots\pm e_8) \mbox{ with an even number of +'s}\}.
\end{eqnarray}
It contains a root system of $\mathfrak{d}_8$.
\smallskip

(8) The case $\mathfrak{g}=\mathfrak{f}_4$. The Cartan subalgebra is
isometrically identified with $\mathbb{R}^4$ with the standard orthonormal
basis $\{e_1,\ldots,e_4\}$. The root system is
\begin{eqnarray}\label{root-system-F-4}
\{\pm e_i \mid 1\leqq i\leqq 4\} \cup \{\pm e_i\pm e_j \mid 1\leqq i<j\leqq 4\}
	\cup \{\tfrac12(\pm e_1\pm\cdots\pm e_4)\}.
\end{eqnarray}
It contains the root system of $\mathfrak{b}_4$.
\smallskip

(9) The case $\mathfrak{g}=\mathfrak{g}_2$. The Cartan subalgebra
is isometrically identified with $\mathbb{R}^2$ with the standard
orthonormal basis $\{e_1,e_2\}$. The root system $\Delta$ is
\begin{eqnarray}\label{root-system-G-2}
\{(\pm\sqrt{3},0),(\pm\tfrac{\sqrt{3}}{2},\pm\tfrac{3}{2}),
(0,\pm 1),(\pm \tfrac{\sqrt{3}}{2},\pm\tfrac{1}{2})\}.
\end{eqnarray}
\smallskip

There are many choices of the orthonormal basis $\{e_1,\ldots,e_n\}$
with respect to which the root systems have the same standard presentations as
above, for example the ones obtained by applying elements of the Weyl group.
In the classical cases this means any permutation of the $e_i$\,,
with any number of sign changes $e_i \mapsto \pm e_i$ in cases
$\mathfrak{b}$ and $\mathfrak{c}$, an even number of sign changes in
case $\mathfrak{d}$.  For type $\mathfrak{d}$ we can also use the outer
automorphism and thus have $e_i \mapsto \pm e_i$
with any number of sign changes.
\smallskip

\section{CK vector fields on compact normal homogeneous spaces}
\setcounter{equation}{0}
Assume $M = G/H$ is a Riemannian normal homogeneous space in which $G$ is a
compact connected simple Lie group, and $H$ is a closed subgroup with
$0<\dim H<\dim G$. We keep all notation of the last section and further
assume there is a nonzero vector $v\in\mathfrak{g}$
that defines a Clifford--Killing vector field on $M$.
The value of $v$ at $\pi(g)$, where $\pi :  G \to G/H$ as usual,
is $\pi_*|_g((L_g)_*(\mathrm{Ad}(g)v))=g_*\pi_*|_\mathbf{e}(\mathrm{Ad}(g)v)$. So the
condition that $v$ defines a nonzero CK vector field on $M$
is that $||\mathrm{pr}_{\mathfrak{m}}(\mathrm{Ad}(g)v)||$ is a
positive constant function of $g$. For the bi-invariant inner product,
$$
||v||^2 =
||\mathrm{Ad}(g)v||^2=||\mathrm{pr}_{\mathfrak{h}}(\mathrm{Ad}(g)v)||^2
+||\mathrm{pr}_{\mathfrak{m}}(\mathrm{Ad}(g)v)||^2
$$
is a constant function of $g\in G$, so the same is true for
$||\mathrm{pr}_{\mathfrak{h}}(\mathrm{Ad}(g)v)||$. Suitably choosing $v$
within its $\mathrm{Ad}(G)$-orbit, we can assume $v\in\mathfrak{t}$
(the standard special Cartan subalgebra given in the last section).
Now $||\mathrm{pr}_{\mathfrak{h}}(\rho(v))||$ and
$||\mathrm{pr}_{\mathfrak{m}}(\rho(v))||$ are constant functions of $\rho$
in the Weyl group.
Because $\mathfrak{g}$ is simple and $v\neq 0$, both the functions
$||\mathrm{pr}_{\mathfrak{h}}(\mathrm{Ad}(g)(v))||$
and $||\mathrm{pr}_{\mathfrak{m}}(\mathrm{Ad}(g))(v)||$ for $g\in G$,
(or the functions $||\mathrm{pr}_{\mathfrak{h}}(\rho(v))||$ and
$||\mathrm{pr}_{\mathfrak{m}}(\rho(v))||$ for $\rho$ in the Weyl group)
are positive constant functions.  From the above observations, it is easy
to prove two special cases of Theorem \ref{main}:
\smallskip

\begin{proposition}\label{easy-no}
Let $G$ be a compact connected simple Lie group and $H$
a closed subgroup with $0<\dim H<\dim G$.
If $\mathfrak{g}=\mathfrak{a}_2$ or $\mathfrak{g}_2$ then there is no
nonzero $v\in\mathfrak{g}$ that defines a CK vector field on the Riemannian
normal homogeneous space $G/H$.
\end{proposition}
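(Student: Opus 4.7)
My plan is to conjugate so that $v\in\mathfrak{t}$ and then exploit the Weyl-invariance of the two norm conditions. By the observations immediately preceding the Proposition, the CK hypothesis forces both $\|\mathrm{pr}_\mathfrak{h}(\rho v)\|$ and $\|\mathrm{pr}_\mathfrak{m}(\rho v)\|$ to be positive constants as $\rho$ ranges over the Weyl group $W$. A key preliminary observation I will use is that for $w\in\mathfrak{t}$ one has $\mathrm{pr}_\mathfrak{h}(w)\in\mathfrak{t}\cap\mathfrak{h}$ and $\mathrm{pr}_\mathfrak{m}(w)\in\mathfrak{t}\cap\mathfrak{m}$, because (\ref{decomp-2})--(\ref{decomp-4}) exhibit the non-Cartan parts of $\mathfrak{h}$ and $\mathfrak{m}$ as sums of root planes, and those root planes are orthogonal to $\mathfrak{t}$. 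This reduces the problem to a question about orthogonal projections inside the two-dimensional space $\mathfrak{t}$.

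Since $\mathrm{rank}\,\mathfrak{g}=2$ in both cases under consideration, I next eliminate the extreme values of $\mathrm{rank}\,H$. If $\mathrm{rank}\,H=0$ then $\dim H=0$, which is excluded. If $\mathrm{rank}\,H=2$ then $\mathfrak{t}\subset\mathfrak{h}$, forcing $\mathrm{pr}_\mathfrak{m}(v)=0$, incompatible with positivity. This leaves $\mathrm{rank}\,H=1$, so $L:=\mathfrak{t}\cap\mathfrak{h}$ is a single line and $\mathfrak{t}\cap\mathfrak{m}=L^\perp$. Letting $\theta$ denote the angle between $v$ and $L$, positivity excludes $\theta\in\{0,\pi/2\}$, and the CK condition translates into: the Weyl orbit $Wv$, which lies on the circle of radius $\|v\|$ in $\mathfrak{t}$, must be contained in the four-point set cut out on that circle by the two lines through the origin making angles $\pm\theta$ with $L$.

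The core of the argument, which I expect to be the main obstacle, is to show this containment is impossible. A nonzero vector in a rank-two Cartan has Weyl-stabilizer of order at most $2$, since the only common fixed point of two distinct reflection walls in $\mathfrak{t}\cong\mathbb{R}^2$ is the origin. Thus $|Wv|\geq|W|/2$, giving $|Wv|\geq 3$ for $\mathfrak{a}_2$ (where $|W|=6$) and $|Wv|\geq 6$ for $\mathfrak{g}_2$ (where $|W|=12$). The $\mathfrak{g}_2$ case is then immediate since $6>4$. In the $\mathfrak{a}_2$ case the orbit consists of three equally spaced points at angles $\phi,\phi+2\pi/3,\phi-2\pi/3$ for some $\phi$; their reductions modulo $\pi$ are pairwise distinct because $2\pi/3\not\equiv 0\pmod\pi$, so the three orbit points span three different diameters and cannot fit on two. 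This closes both cases.
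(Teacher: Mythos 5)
Your proposal is correct, but it reaches the conclusion by a genuinely different route than the paper. After the common setup (conjugating $v$ into $\mathfrak{t}$, using the orthogonal decomposition $\mathfrak{t}=(\mathfrak{t}\cap\mathfrak{h})+(\mathfrak{t}\cap\mathfrak{m})$, reducing to $\mathrm{rank}\,H=1$, and noting that both projection norms are constant and positive on the Weyl orbit), the paper argues root--theoretically: it sums the Weyl orbit to zero, splits the six orbit points into two triples according to the sign of the $\mathfrak{m}$--projection, finds two orbit points related by a root reflection, concludes that $\mathfrak{t}\cap\mathfrak{h}$ and $\mathfrak{t}\cap\mathfrak{m}$ would each have to be spanned by a root, and derives the contradiction from the absence of an orthogonal pair of roots in $\mathfrak{a}_2$ (the $\mathfrak{g}_2$ case being reduced to this via the $\mathfrak{a}_2$ Weyl subgroup of long roots). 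You instead bound the orbit from below, $|Wv|\geq |W|/2$, since any orthogonal map of the plane fixing $v\neq 0$ is the identity or the reflection in $\mathbb{R}v$, and observe that the CK constraints confine $Wv$ to the four points cut out on the circle of radius $\|v\|$ by two lines through the origin; then $12/2=6>4$ kills $\mathfrak{g}_2$ outright, and for $\mathfrak{a}_2$ the rotational sub-orbit $\{v,rv,r^2v\}$ (rotation by $2\pi/3$) spans three distinct diameters, which cannot lie on two. Your phrase ``the orbit consists of three equally spaced points'' is literally accurate only when $|Wv|=3$; when $|Wv|=6$ the conclusion is even more immediate ($6>4$), and in either case the three-point rotational sub-orbit argument applies, so this is only a cosmetic imprecision. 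What each approach buys: yours is more uniform and purely metric--geometric, needing no facts about which roots are orthogonal in which system, while the paper's version produces the structural observation (orbit-sum zero, $\mathfrak{t}\cap\mathfrak{h}$ and $\mathfrak{t}\cap\mathfrak{m}$ spanned by roots) that foreshadows the root-plane bookkeeping used in the later sections.
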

\smallskip

\begin{proof}
Consider $\mathfrak{g}=\mathfrak{a}_2$ first. Assume conversely there is
a nonzero CK vector field, defined by the nonzero vector
$v\in \mathfrak{t}$. The subspaces
$\mathfrak{t}\cap\mathfrak{h}$ and $\mathfrak{t}\cap\mathfrak{m}$ are a pair of
orthogonal lines in $\mathfrak{t}$. Denote all different vectors in the Weyl group orbit of $v$
as $v_1=v$, $\ldots$, $v_k$, $k=3$ or $6$, then
$$\sum_{i=1}^k v_i=\sum_{i=1}^k \mathrm{pr}_\mathfrak{h}(v_i)
=\sum_{i=1}^k \mathrm{pr}_\mathfrak{m}(v_i)=0.$$
All the vectors $\mathrm{pr}_{\mathfrak{m}}(v_i)$ have the same nonzero length,
which only have two possible choices in $\mathfrak{t}\cap\mathfrak{m}$. So
we must have $k=6$, and all $v_i$ can be divided into two set, such that, for example,
$\mathrm{pr}_\mathfrak{m}(v_1)=\mathrm{pr}_\mathfrak{m}(v_2)=\mathrm{pr}_\mathfrak{m}(v_3)$ and
$\mathrm{pr}_\mathfrak{m}(v_4)=\mathrm{pr}_\mathfrak{m}(v_5)=\mathrm{pr}_\mathfrak{m}(v_6)$
are opposite to each other. Obviously $v_1+v_2+v_3\neq 0$, so
there are two $v_i$ among them, $v_1$ and $v_2$ for example, such that $v_1=\rho(v_2)$, in which
$\rho$ is the reflection in some root of $\mathfrak{g}$.
Thus $\mathfrak{t}\cap\mathfrak{h}$, containing $v_1-v_2$, is linearly spanned by a root of $\mathfrak{g}$.
Similar argument can also prove $\mathfrak{t}\cap\mathfrak{m}$
is spanned by a root of $\mathfrak{g}$. But for $\mathfrak{a}_2$, there do not exist a pair of orthogonal
roots. This is a contradiction.

The Weyl group of $\mathfrak{g}_2$ contains that of $\mathfrak{a}_2$ as its subgroup,
so the statement for $\mathfrak{g}_2$ also follows immediately the above argument.
\end{proof}

To prove Theorem \ref{main} in general we need some preparation.  Suppose
that $M = G/H$ is a Riemannian normal homogeneous space and $v \in
\mathfrak{g}$ defines a CK vector field on $M$.  If $\psi: G' \to G$ is a
covering group and $H'$ is an open subgroup of $\psi^{-1}(H)$, then
$M' = G'/H'$ is a Riemannian normal homogeneous space and a Riemannian
covering manifold of $M$, and the same $v \in \mathfrak{g}$ defines a CK
vector field on $M'$.  Thus we can always replace $G$ by a covering group.
Similarly we can go down to a certain class of subgroups:
\smallskip

\begin{lemma}\label{trivial-lemma}
Let $M = G/H$ be a Riemannian normal homogeneous space such that $v \in
\mathfrak{g}$ defines a CK vector field on $M$.  Let $G'$ be a
closed subgroup of $G$ whose Lie algebra $\mathfrak{g}'$ satisfies
$\mathfrak{g}'=\mathfrak{g}'\cap\mathfrak{h}+\mathfrak{g}'\cap\mathfrak{m}$.
Let $H'$ be a closed subgroup of $G'$ with Lie algebra
$\mathfrak{h}'=\mathfrak{g}'\cap\mathfrak{h}$.  Then the restriction of the
bi-invariant inner product of $\mathfrak{g}$ to $\mathfrak{g}'$ defines a
Riemannian normal homogeneous metric on $M' = G'/H'$. If $v=v'+v''$ with
$v'\in\mathfrak{g}'$, $\langle v'',\mathfrak{g}'\rangle=0$, and
$[v'',\mathfrak{g}']=0$, then $v'$ defines a CK vector field on
$M' = G'/H'$.
\end{lemma}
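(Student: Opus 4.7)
The plan is to verify the two assertions sequentially. The first is bookkeeping about the orthogonal decomposition induced on $\mathfrak{g}'$, while the second is a direct norm computation in which the two technical hypotheses on $v''$ each play a distinct role.

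For the first assertion I would set $\mathfrak{h}' = \mathfrak{g}' \cap \mathfrak{h}$ and $\mathfrak{m}' = \mathfrak{g}' \cap \mathfrak{m}$, observe that the hypothesis $\mathfrak{g}' = \mathfrak{g}'\cap\mathfrak{h} + \mathfrak{g}'\cap\mathfrak{m}$ together with $\mathfrak{h} \perp \mathfrak{m}$ exhibits $\mathfrak{g}' = \mathfrak{h}' + \mathfrak{m}'$ as an orthogonal direct sum, and note that the restriction of $\langle\cdot,\cdot\rangle$ to $\mathfrak{g}'$ is $\mathrm{Ad}(G')$-invariant because it is $\mathrm{Ad}(G)$-invariant on the ambient Lie algebra. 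That supplies all the data required for a normal homogeneous metric on $M' = G'/H'$.

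For the second assertion the key observation is that for any $x \in \mathfrak{g}'$ the components $x_\mathfrak{h}$ and $x_\mathfrak{m}$ already lie in $\mathfrak{g}'$, so $\mathrm{pr}_\mathfrak{m}(x) = \mathrm{pr}_{\mathfrak{m}'}(x)$. Since $\mathrm{Ad}(g')$ preserves $\mathfrak{g}'$ for $g'\in G'$ and since $[v'',\mathfrak{g}'] = 0$ gives $\mathrm{Ad}(g')v'' = v''$, I can decompose $v'' = v''_\mathfrak{h} + v''_\mathfrak{m}$ with respect to $\mathfrak{g} = \mathfrak{h} + \mathfrak{m}$ and obtain
\begin{equation*}
\mathrm{pr}_\mathfrak{m}(\mathrm{Ad}(g')v) = \mathrm{pr}_{\mathfrak{m}'}(\mathrm{Ad}(g')v') + v''_\mathfrak{m}.
\end{equation*}
Expanding the squared norm produces a cross term $2\langle \mathrm{pr}_{\mathfrak{m}'}(\mathrm{Ad}(g')v'),\, v''_\mathfrak{m}\rangle$; the first factor lies in $\mathfrak{m}' \subset \mathfrak{g}'$, while $\langle v'',\mathfrak{g}'\rangle = 0$ combined with $v''_\mathfrak{h} \in \mathfrak{h} \perp \mathfrak{m}'$ forces $\langle v''_\mathfrak{m},\mathfrak{m}'\rangle = 0$, so the cross term vanishes. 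Then
\begin{equation*}
\|\mathrm{pr}_\mathfrak{m}(\mathrm{Ad}(g')v)\|^2 = \|\mathrm{pr}_{\mathfrak{m}'}(\mathrm{Ad}(g')v')\|^2 + \|v''_\mathfrak{m}\|^2,
\end{equation*}
and since the left side is constant in $g' \in G' \subset G$ (as $v$ is CK on $M$) and the last term is independent of $g'$, the remaining term is constant, which is the CK condition for $v'$ on $M'$.

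I do not anticipate any serious obstacle; the whole argument is linear algebra with bi-invariance. The delicate point worth flagging is that the two conditions on $v''$ are non-redundant: centrality $[v'',\mathfrak{g}'] = 0$ is what yields the additive decomposition of $\mathrm{Ad}(g')v$, while orthogonality $\langle v'',\mathfrak{g}'\rangle = 0$ is what eliminates the cross term in the norm expansion. Without either, the reduction from $(M,v)$ to $(M',v')$ would fail.
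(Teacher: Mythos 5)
Your proposal is correct and follows essentially the same route as the paper: split $\mathrm{Ad}(g')v=\mathrm{Ad}(g')v'+v''$ using $[v'',\mathfrak{g}']=0$, kill the cross term via $\langle v'',\mathfrak{g}'\rangle=0$ together with the compatibility of the decomposition $\mathfrak{g}'=(\mathfrak{g}'\cap\mathfrak{h})+(\mathfrak{g}'\cap\mathfrak{m})$, and conclude constancy of $||\mathrm{pr}_{\mathfrak{m}'}(\mathrm{Ad}(g')v')||$. Your explicit verification of the first assertion and of $\mathrm{pr}_\mathfrak{m}=\mathrm{pr}_{\mathfrak{m}'}$ on $\mathfrak{g}'$ only spells out what the paper leaves implicit.
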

\smallskip

\begin{proof}
Because of the decomposition $\mathfrak{g}'=(\mathfrak{g}'\cap\mathfrak{h})
+(\mathfrak{g}'\cap\mathfrak{m})$, we also have
$\mathfrak{g}'^\perp=(\mathfrak{g}'^\perp\cap\mathfrak{h})
+(\mathfrak{g}'^\perp\cap\mathfrak{m})$.
The condition that $v=v'+v''$
defines a CK vector field for the Riemannian normal homogeneous space 
$M=G/H$ implies that
\begin{eqnarray*}
||\mathrm{pr}_\mathfrak{m}(\mathrm{Ad}(g')v)||^2
&=&||\mathrm{pr}_\mathfrak{m}(\mathrm{Ad}(g')v')
+\mathrm{pr}_\mathfrak{m}(\mathrm{Ad}(g')v'')||^2\\
&=&||\mathrm{pr}_\mathfrak{m}(\mathrm{Ad}(g')v')
+\mathrm{pr}_\mathfrak{m}(v'')||^2\\
&=&||\mathrm{pr}_\mathfrak{m}(\mathrm{Ad}(g')v')||^2
+||\mathrm{pr}_\mathfrak{m}(v'')||^2
\end{eqnarray*}
is a constant function for $g'\in G'$. And so does
$||\mathrm{pr}_\mathfrak{m}(\mathrm{Ad}(g')v')||^2$, i.e.
$v'$ defines a CK vector field for the Riemannian normal homogeneous space
$M'=G'/H'$.
\end{proof}
\smallskip

We will frequently use Lemma \ref{trivial-lemma}
to reduce our considerations to smaller groups. 
\smallskip

\begin{lemma}\label{lemma-3-3}
Suppose that $0 \ne v\in\mathfrak{t}$ defines a CK vector field on
$M=G/H$. Then
\begin{description}
\item{\rm (1)} If the Weyl group orbit $W(v)$ contains an orthogonal basis
of $\mathfrak{t}$ then
$$\frac{||\mathrm{pr}_{\mathfrak{h}}(v)||^2}{||v||^2}=
\frac{\dim(\mathfrak{t}\cap\mathfrak{h})}{\dim\mathfrak{t}}.$$
\item{\rm (2)} If {\rm Ad}$(G)v$ contains an orthogonal basis of $\mathfrak{g}$
then
$$\frac{||\mathrm{pr}_{\mathfrak{h}}(v)||^2}{||v||^2}=
\frac{\dim\mathfrak{h}}{\dim\mathfrak{g}}.$$
\end{description}
\end{lemma}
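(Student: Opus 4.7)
The plan is to prove both parts by the same trace-of-projection argument. Let $P_{\mathfrak{h}}$ denote the orthogonal projection $\mathfrak{g} \to \mathfrak{h}$, which is self-adjoint with respect to $\langle\cdot,\cdot\rangle$ and satisfies $P_{\mathfrak{h}}^2 = P_{\mathfrak{h}}$. The key observation is that the CK condition says exactly that the function $g \mapsto \|P_{\mathfrak{h}}(\mathrm{Ad}(g)v)\|^2$ is constant on $G$, so it equals $\|P_{\mathfrak{h}}(v)\|^2$ on every adjoint orbit vector, and in particular on every Weyl-orbit element when $v \in \mathfrak{t}$.

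For part (1), let $u_1,\ldots,u_r$ be the orthogonal basis of $\mathfrak{t}$ contained in $W(v)$, where $r = \dim\mathfrak{t}$; all $u_i$ have the same length $\|v\|$. I would first note that if $u \in \mathfrak{t}$ then $P_{\mathfrak{h}}(u) \in \mathfrak{t}\cap\mathfrak{h}$ (because the splitting $\mathfrak{t} = (\mathfrak{t}\cap\mathfrak{h}) + (\mathfrak{t}\cap\mathfrak{m})$ of (\ref{decomp-3}) is orthogonal). Consequently $P_{\mathfrak{h}}$ restricts to the orthogonal projection $\mathfrak{t} \to \mathfrak{t}\cap\mathfrak{h}$, whose trace in the orthonormal basis $\{u_i/\|v\|\}$ equals $\dim(\mathfrak{t}\cap\mathfrak{h})$. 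Using self-adjointness and $P_{\mathfrak{h}}^2 = P_{\mathfrak{h}}$,
\begin{equation*}
\dim(\mathfrak{t}\cap\mathfrak{h}) = \sum_{i=1}^r \frac{\langle P_{\mathfrak{h}}(u_i),u_i\rangle}{\|v\|^2}
= \sum_{i=1}^r \frac{\|P_{\mathfrak{h}}(u_i)\|^2}{\|v\|^2}.
\end{equation*}
Each $u_i = \rho_i(v)$ for some $\rho_i$ in the Weyl group, so by the CK constancy $\|P_{\mathfrak{h}}(u_i)\| = \|P_{\mathfrak{h}}(v)\|$. Substituting gives $\dim(\mathfrak{t}\cap\mathfrak{h}) = r\,\|\mathrm{pr}_{\mathfrak{h}}(v)\|^2/\|v\|^2$, which rearranges to the desired identity.

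For part (2), repeat the same computation but on all of $\mathfrak{g}$: let $u_1,\ldots,u_n \in \mathrm{Ad}(G)v$ be the orthogonal basis of $\mathfrak{g}$ with $n = \dim\mathfrak{g}$. The trace of $P_{\mathfrak{h}}$ acting on $\mathfrak{g}$ is $\dim\mathfrak{h}$, and the same trace formula combined with $\|P_{\mathfrak{h}}(u_i)\| = \|P_{\mathfrak{h}}(v)\|$ (now using Ad-invariance, which is the CK hypothesis itself) yields $\dim\mathfrak{h} = n\,\|\mathrm{pr}_{\mathfrak{h}}(v)\|^2/\|v\|^2$.

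There is no real obstacle here; the only thing to be slightly careful about is confirming that $P_{\mathfrak{h}}$ preserves $\mathfrak{t}$ (so that the trace formula genuinely computes $\dim(\mathfrak{t}\cap\mathfrak{h})$ in part (1)), which is immediate from the orthogonality of the decomposition (\ref{decomp-3}). Everything else is the standard identity $\mathrm{tr}(P) = \sum \langle Pe_i,e_i\rangle$ applied to the specific Weyl-orbit or $G$-orbit basis provided by the hypotheses.
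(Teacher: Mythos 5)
Your proof is correct and is essentially the paper's own argument: the paper computes $\sum_i \|\mathrm{pr}_{\mathfrak{h}}(v_i)\|^2$ by expanding an orthonormal basis of $\mathfrak{t}\cap\mathfrak{h}$ in the orthonormalized orbit basis and interchanging the double sum, which is exactly your identity $\mathrm{tr}(P_{\mathfrak{h}})=\sum_i\|P_{\mathfrak{h}}(u_i)\|^2$ written in coordinates. Combined, in both cases, with the CK constancy of $\|\mathrm{pr}_{\mathfrak{h}}\|$ along the Weyl orbit (part (1)) or the $\mathrm{Ad}(G)$-orbit (part (2)), the two arguments coincide.
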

\smallskip

\begin{proof}
(1) For simplicity, we assume $||v||=1$.
Let $\{v_1,\ldots,v_n\} \subset W(v)$ be an orthogonal basis of
$\mathfrak{t}$, and $\{u_1,\ldots,u_h\}$ an orthonormal basis of
$\mathfrak{t}\cap\mathfrak{h}$.  Expand $u_i=\sum_{j=1}^n a_{ij}v_j$\,; then
$\mathrm{pr}_{\mathfrak{h}}(v_i)=\sum_{j=1}^h a_{ij}u_j$ and
$||\mathrm{pr}_{\mathfrak{h}}(v_i)||^2=\sum_{j=1}^h a_{ij}^2$\,.  So
$$
\frac{||\mathrm{pr}_{\mathfrak{h}}(v)||^2}{||v||^2}=
\frac{1}{n}\left (\sum_{i=1}^n||\mathrm{pr}_{\mathfrak{h}}(v_i)||^2\right )
=\frac1n\sum_{i=1}^n\sum_{j=1}^h||a_{ij}||^2=
\frac1n\sum_{j=1}^h\sum_{i=1}^h||a_{ij}||^2=\frac{h}n\,.
$$
That proves the first assertion.  The proof of the second is similar.
\end{proof}
\smallskip

Lemma \ref{lemma-3-3} provides a useful tool when we deal the cases
$\mathfrak{g}=\mathfrak{b}_n$ and $\mathfrak{g}=\mathfrak{d}_n$.
\smallskip

\begin{lemma}\label{lemma-3-1}
Suppose that $0 \ne v\in\mathfrak{t}$ defines a CK vector field on
$M=G/H$. Let $\alpha, \beta \in \Delta(\mathfrak{g},\mathfrak{t})$
such that $\langle\alpha,\beta\rangle=0$ and $\alpha(v) \ne 0 \ne \beta(v)$.
Then
$$
\langle \mathrm{pr}_{\mathfrak{h}}(\alpha),
	\mathrm{pr}_{\mathfrak{h}}(\beta) \rangle=
\langle \mathrm{pr}_{\mathfrak{m}}(\alpha),
	\mathrm{pr}_\mathfrak{m}(\beta)\rangle=0.
$$
\end{lemma}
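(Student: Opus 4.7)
The plan is to exploit the Weyl group action on $v$, using only the reflections in the orthogonal pair $\alpha, \beta$. Since $\alpha \perp \beta$, the reflections $s_\alpha$ and $s_\beta$ commute, and $s_\alpha(\beta) = \beta$, $s_\beta(\alpha) = \alpha$. This gives four easily computable elements of $W(v)$:
\begin{eqnarray*}
v, \quad s_\alpha v = v - c_\alpha\alpha,\quad s_\beta v = v - c_\beta\beta,\quad s_\alpha s_\beta v = v - c_\alpha\alpha - c_\beta\beta,
\end{eqnarray*}
where $c_\alpha = 2\alpha(v)/\langle\alpha,\alpha\rangle$ and $c_\beta = 2\beta(v)/\langle\beta,\beta\rangle$ are nonzero by assumption.

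By the observations just before Proposition \ref{easy-no}, the function $\rho \mapsto \|\mathrm{pr}_{\mathfrak{h}}(\rho v)\|^2$ is constant on the Weyl group. Writing $v_h = \mathrm{pr}_{\mathfrak{h}}(v)$, $\alpha_h = \mathrm{pr}_{\mathfrak{h}}(\alpha)$, $\beta_h = \mathrm{pr}_{\mathfrak{h}}(\beta)$, the equalities $\|\mathrm{pr}_{\mathfrak{h}}(s_\alpha v)\|^2 = \|\mathrm{pr}_{\mathfrak{h}}(v)\|^2$ and the analog for $s_\beta$ yield
\begin{eqnarray*}
2\langle v_h,\alpha_h\rangle = c_\alpha\|\alpha_h\|^2,\qquad 2\langle v_h,\beta_h\rangle = c_\beta\|\beta_h\|^2.
\end{eqnarray*}
Then expanding $\|\mathrm{pr}_{\mathfrak{h}}(s_\alpha s_\beta v)\|^2 = \|v_h\|^2$ and substituting these two identities, all the diagonal terms $c_\alpha^2\|\alpha_h\|^2$ and $c_\beta^2\|\beta_h\|^2$ cancel, leaving $2c_\alpha c_\beta\langle\alpha_h,\beta_h\rangle = 0$. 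Since $c_\alpha,c_\beta \ne 0$, this gives $\langle \mathrm{pr}_{\mathfrak{h}}(\alpha),\mathrm{pr}_{\mathfrak{h}}(\beta)\rangle = 0$.

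For the $\mathfrak{m}$-component, I would use the orthogonal decomposition $\mathfrak{g} = \mathfrak{h} \oplus \mathfrak{m}$: writing $\alpha = \alpha_h + \alpha_{\mathfrak{m}}$ and $\beta = \beta_h + \beta_{\mathfrak{m}}$, the cross terms $\langle \alpha_h,\beta_{\mathfrak{m}}\rangle$ and $\langle \alpha_{\mathfrak{m}},\beta_h\rangle$ vanish, so
\begin{eqnarray*}
\langle\alpha,\beta\rangle = \langle\alpha_h,\beta_h\rangle + \langle\alpha_{\mathfrak{m}},\beta_{\mathfrak{m}}\rangle.
\end{eqnarray*}
Both $\langle\alpha,\beta\rangle$ and (just proved) $\langle\alpha_h,\beta_h\rangle$ are zero, so $\langle\mathrm{pr}_{\mathfrak{m}}(\alpha),\mathrm{pr}_{\mathfrak{m}}(\beta)\rangle = 0$ as well. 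There is no real obstacle here: the content is just that four Weyl elements $e, s_\alpha, s_\beta, s_\alpha s_\beta$ acting on $v$ suffice, and the whole argument is linear algebra once one writes down these four vectors.
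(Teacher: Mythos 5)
Your proof is correct and is essentially the paper's own argument: the paper likewise applies $\mathrm{pr}_{\mathfrak{h}}$ to the four Weyl images $v,\ \rho_\alpha(v),\ \rho_\beta(v),\ \rho_\beta\rho_\alpha(v)$ and uses the constancy of $\|\mathrm{pr}_{\mathfrak{h}}(\rho(v))\|$, phrasing the cancellation geometrically (the four points form a parallelogram inscribed in a circle, hence a rectangle with edges parallel to $\mathrm{pr}_{\mathfrak{h}}(\alpha)$ and $\mathrm{pr}_{\mathfrak{h}}(\beta)$) where you expand the norms algebraically. The $\mathfrak{m}$-statement is deduced in the same way in both, via $\langle\alpha,\beta\rangle=\langle\mathrm{pr}_{\mathfrak{h}}\alpha,\mathrm{pr}_{\mathfrak{h}}\beta\rangle+\langle\mathrm{pr}_{\mathfrak{m}}\alpha,\mathrm{pr}_{\mathfrak{m}}\beta\rangle$.
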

\smallskip

\begin{proof}
Let the reflections for the roots $\alpha$ and $\beta$ be denoted
$\rho_{\alpha}$ and $\rho_{\beta}$ respectively. Then the four points
$$v_1=\mathrm{pr}_{\mathfrak{h}}v,
v_2=\mathrm{pr}_{\mathfrak{h}}(\rho_{\alpha}(v))
	=\mathrm{pr}_{\mathfrak{h}}(v) -
	 \frac{2\langle v,\alpha\rangle}{\langle\alpha,\alpha\rangle}
 	 \mathrm{pr}_{\mathfrak{h}}(\alpha),$$
$$v_3=\mathrm{pr}_{\mathfrak{h}}(\rho_{\beta}(v))
	=\mathrm{pr}_{\mathfrak{h}}(v)
	 -\frac{2\langle v,\beta\rangle}{\langle\beta,\beta\rangle}
	  \mathrm{pr}_{\mathfrak{h}}(\alpha)$$ and
$$v_4=\mathrm{pr}_{\mathfrak{h}}(\rho_{\beta}\rho_{\alpha}(v))
	= \mathrm{pr}_{\mathfrak{h}}(v)
	 -\frac{2\langle v,\alpha\rangle}{\langle\alpha,\alpha\rangle}
	\mathrm{pr}_{\mathfrak{h}}(\alpha)-
	 \frac{2\langle v,\beta\rangle}{\langle\beta,\beta\rangle}
	 \mathrm{pr}_{\mathfrak{h}}(\alpha)$$
belong to a two dimensional plane and have the same distance from 0.
They are the vertices of a rectangle with adjacent edges parallel to
$\mathrm{pr}_{\mathfrak{h}}(\alpha)$ and $\mathrm{pr}_{\mathfrak{h}}(\beta)$
respectively.  Those edges are  orthogonal, in other words
$\langle\mathrm{pr}_{\mathfrak{h}}(\alpha),
\mathrm{pr}_{\mathfrak{h}}(\beta)\rangle=0$. The other
statement, $\langle\mathrm{pr}_{\mathfrak{m}}(\alpha),
\mathrm{pr}_{\mathfrak{m}}(\beta)\rangle=0$, follows immediately.
\end{proof}
\smallskip

Lemma \ref{lemma-3-1} is the key to our study of the CK vector fields on
the Cartan subalgebra level. The next proposition implies, at least for
classical $\mathfrak{g}$, that a nonzero vector $v$ which defines a
CK vector field must would be very singular.
\smallskip

\begin{proposition}\label{prop-3-2}
Suppose that $\mathfrak{g}$ is classical, i.e.
$\mathfrak{g}=\mathfrak{a}_n$ for $n>0$,
$\mathfrak{b}_n$ for $n>1$,
$\mathfrak{c}_n$ for $n>2$ or
$\mathfrak{d}_n$ for $n>3$.  Suppose that $0 \ne v\in\mathfrak{t}$ defines a
CK vector field on the Riemannian normal homogeneous space $M=G/H$.
Use the standard presentations for the Cartan subalgebra $\mathfrak{t}$ and
root system $\Delta$ in {\rm (\ref{root-system-A-n})--(\ref{root-system-D-n})}.
Then, for a suitable choice of the $e_i$\,, $v$ must be one of the following,
up to multiplication by a nonzero scalar.
\begin{description}
\item{\rm (1)} Let $\mathfrak{g}=\mathfrak{a}_n$ with $n>2$.
 Then $v=n e_1-e_2-\ldots-e_{n+1}$, or $($if  $n$ is odd\,$)$
 $v=e_1+\cdots+e_k-e_{k+1}-\cdots-e_{n+1}$ for $n=2k-1$\,.
\item{\rm (2)} Let $\mathfrak{g}=\mathfrak{b}_n$ or $\mathfrak{c}_n$ with $n>1$,
or let $\mathfrak{g}=\mathfrak{d}_n$ with $n>3$.
 Then $v=e_1$ or $v=e_1+\cdots+e_n$.
\end{description}
\end{proposition}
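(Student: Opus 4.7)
The plan is to apply Lemma \ref{lemma-3-1} systematically to every pair of mutually orthogonal roots $\alpha,\beta$ that are both active on $v$ (i.e.\ $\alpha(v)\neq0\neq\beta(v)$), and translate the resulting orthogonalities into linear relations among the projected vectors $p_i=\mathrm{pr}_{\mathfrak{h}}(e_i)\in\mathfrak{t}\cap\mathfrak{h}$. By the Weyl group action (signed permutations in types $\mathfrak{b},\mathfrak{c},\mathfrak{d}$, permutations plus the outer involution in type $\mathfrak{a}$) we may sort $v=\sum a_i e_i$ into a normal form; the proof then splits by Lie type and by the pattern of zeros and repetitions among the $a_i$.

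The central reduction is the following dimensional dichotomy, to which most cases will be brought. Suppose at some stage we have shown that $\{p_1,\ldots,p_n\}$ (with $n=\dim\mathfrak{t}$) is pairwise orthogonal and of a single common length $c$. If $c=0$ then $\mathfrak{t}\cap\mathfrak{h}=0$, so the compact $\mathfrak{h}$ has trivial Cartan and hence $\mathfrak{h}=0$, contradicting $\dim H>0$. If $c>0$ then the $p_i$ are linearly independent in $\mathfrak{t}\cap\mathfrak{h}\subset\mathfrak{t}$, so $\mathfrak{t}\subset\mathfrak{h}$ and thus $v\in\mathfrak{h}$, forcing $\mathrm{pr}_{\mathfrak{m}}(v)=0$ and contradicting that $v$ defines a nonzero CK vector field.

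For types $\mathfrak{b}_n$ and $\mathfrak{c}_n$ the presence of the single-index roots $e_i$ (resp.\ $2e_i$) is decisive: whenever $a_i\ne0\ne a_j$ we get $p_i\perp p_j$ from $(e_i,e_j)$ alone, and whenever $|a_i|\ne|a_j|$ the orthogonal pair $(e_i+e_j,e_i-e_j)$ is active and yields $\|p_i\|=\|p_j\|$. Combining these with auxiliary orthogonality pairs such as $(e_i,e_j+e_l)$ when $a_l=0$ and $a_i,a_j\ne 0$, one shows that unless $v$ has exactly one nonzero coordinate or all nonzero coordinates equal in absolute value, the full frame $\{p_1,\ldots,p_n\}$ becomes orthogonal of common length; the dichotomy then forces $v$ to be a scalar multiple of $e_1$ or of $e_1+\cdots+e_n$, as claimed. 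Type $\mathfrak{d}_n$ is handled by the same scheme, with the missing single-index roots replaced by pairs $(e_i\pm e_j,e_k\pm e_l)$ with $\{i,j\}\cap\{k,l\}=\emptyset$, which for $n>3$ still supply enough orthogonality and equal-length relations to reach the same dichotomy.

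For type $\mathfrak{a}_n$ only the roots $e_i-e_j$ are available, so Lemma \ref{lemma-3-1} reduces to the rectangle identities
\begin{equation*}
\langle p_i-p_j,\,p_k-p_l\rangle=0,\quad\text{i.e.\ }c_{ij}+c_{kl}=c_{ik}+c_{jl}=c_{il}+c_{jk},
\end{equation*}
where $c_{ab}=\langle p_a,p_b\rangle$, valid for every $4$-tuple with $\{i,j\}\cap\{k,l\}=\emptyset$, $a_i\ne a_j$, $a_k\ne a_l$. The plan here is (i) to show that $v$ takes at most two distinct coordinate values --- assuming a third value gives access to enough rectangle identities that the dimensional dichotomy again applies --- and then (ii) to use $\sum a_i=0$ to see that two values with multiplicities $(m,n+1-m)$ force either $m=1$ (giving $ne_1-e_2-\cdots-e_{n+1}$) or $2m=n+1$ (giving the balanced vector for $n=2k-1$). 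The main obstacle I expect is step (ii): without single-index roots and without sign-changing Weyl reflections, the rectangle identities do not immediately collapse the intermediate multiplicity patterns $1<m<n$ with $2m\ne n+1$ onto the orthogonal-frame dichotomy, and a finer combinatorial argument, very likely assisted by Lemma \ref{lemma-3-3}(1) applied to a carefully chosen orthogonal subset of the $S_{n+1}$-orbit of $v$, will be required to eliminate them.
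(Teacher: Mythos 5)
Your treatment of types $\mathfrak{b}_n$, $\mathfrak{c}_n$, $\mathfrak{d}_n$ is essentially the paper's argument (Lemma \ref{lemma-3-1} applied to orthogonal pairs of roots that are active on a Weyl conjugate of $v$, then a rank/constancy argument), and with your auxiliary pairs it can be pushed through; but two points need repair. First, in type $\mathfrak{d}_n$ the Weyl group gives only an even number of sign changes, so to merge $e_1+\cdots+e_{n-1}-e_n$ with $e_1+\cdots+e_n$ you must invoke the outer automorphism, as the paper does under the phrase ``for a suitable choice of the $e_i$''. Second, your stated trichotomy lets vectors such as $v=e_1+e_2$ in $\mathfrak{b}_4$ survive (all \emph{nonzero} coordinates of equal absolute value, but some coordinates zero), and these are not among the allowed conclusions; they are killed only by running the auxiliary pairs $(e_i,\,e_j\pm e_l)$, $(e_i\pm e_l,\,e_i\mp e_l)$, $(e_i+e_l,\,e_j+e_{l'})$ with $a_l=a_{l'}=0$, so your final sentence ``the dichotomy then forces $v=e_1$ or $e_1+\cdots+e_n$'' does not follow formally from the case split as written. (The paper finishes slightly differently: once each $e_i$ lies wholly in $\mathfrak{h}$ or wholly in $\mathfrak{m}$, Weyl-invariance of $\|\mathrm{pr}_{\mathfrak{h}}(\rho(v))\|$ forces all $|a_i|$ equal.)

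The genuine gap is type $\mathfrak{a}_n$, and you flag it yourself. Your ``dimensional dichotomy'' concerns pairwise orthogonality of the frame $p_i=\mathrm{pr}_{\mathfrak{h}}(e_i)$, but in type $\mathfrak{a}$ Lemma \ref{lemma-3-1} only controls differences $p_i-p_j$ for \emph{disjoint} index pairs; overlapping differences correspond to non-orthogonal roots, so no orthogonal frame is available and step (i) (``a third value makes the dichotomy apply again'') is unsupported. Step (ii) you leave open, and the proposed rescue via Lemma \ref{lemma-3-3}(1) fails in general: the $S_{n+1}$-orbit of an unbalanced two-valued vector need not contain any orthogonal pair, e.g.\ for $v=(3,3,-2,-2,-2)$ the inner product of two permutations of $v$ equals $25t-20$ with $t\in\{0,1,2\}$ the overlap of the ``$3$''-positions, never zero. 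The missing idea is the paper's: fixing $(i,j)$ and letting the orthogonal pair $(k,l)$ range over the complementary indices, the identity (\ref{proj-coef}) forces $\mathrm{pr}_{\mathfrak{h}}(e_i-e_j)=ae_i+be_j+\tfrac{-a-b}{n-1}\sum_{m\neq i,j}e_m$; pairing two such projections for disjoint pairs yields $(a+b)(c+d)$ times a nonzero constant $=0$, hence in every orthogonal pair of roots at least one root lies wholly in $\mathfrak{h}$ or wholly in $\mathfrak{m}$. Then either some ``mixed'' root exists, in which case $\mathfrak{t}\cap\mathfrak{m}$ (or $\mathfrak{t}\cap\mathfrak{h}$) is the line $\mathbb{R}(ne_1-e_2-\cdots-e_{n+1})$ and Weyl-invariance of $\|\mathrm{pr}_{\mathfrak{m}}(\rho(v))\|$, which is proportional to $|a_1|$, forces all $|a_i|$ equal, i.e.\ the balanced vector with $n$ odd; or every root lies in $\mathfrak{h}$ or $\mathfrak{m}$, so $\Delta$ splits orthogonally, contradicting simplicity of $\mathfrak{g}$. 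Without this (or an equivalent) argument, the intermediate multiplicity patterns and the three-or-more-value patterns in type $\mathfrak{a}_n$ are not eliminated, so the proposal does not yet prove the proposition.
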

\smallskip

\begin{proof} (1) Assume $\mathfrak{g}=\mathfrak{a}_n$ with $n>2$.
If the Weyl group orbit of $v$ contains a multiple of
$ne_1-e_2-\cdots-e_{n+1}$\,, then Assertion (1) is proved.  Now suppose
that the Weyl group orbit of $v$ does not contain a multiple of
$ne_1-e_2-\cdots-e_{n+1}$\,. Then
for any orthogonal pair of roots, $\alpha=e_i-e_j$ and
$\beta=e_k-e_l$ with $i$, $j$, $k$ and $l$ distinct,
we can  replace $v$ by a Weyl group conjugate and assume
$v=a_1e_1+\cdots+a_{n+1}e_{n+1}$ where $a_i\neq a_j$ and $a_k\neq a_l$.
Applying Lemma \ref{lemma-3-1}, we have
\begin{equation}\label{proj-coef}
0 = \langle e_i-e_j, e_k-e_l\rangle =
\langle\mathrm{pr}_{\mathfrak{h}}(e_i-e_j),
	\mathrm{pr}_{\mathfrak{h}}(e_k-e_l)\rangle =
\langle\mathrm{pr}_{\mathfrak{h}}(e_i-e_j), e_k-e_l \rangle
\end{equation}
for any $k$ and $l$ such that $i$, $j$, $k$ and $l$ are distinct.
Express $\mathrm{pr}_{\mathfrak{h}}(e_i-e_j) = \sum r_me_m$\,.
Hold $i$ and $j$ fixed, and let $k$ and $\ell$ vary over $\{1,\dots,n+1\}
\setminus \{i,j\}$.  Then (\ref{proj-coef}) shows that
all such $r_k = r_\ell$\,.  Thus we have constants $a$ and $b$ such that
$$
\mathrm{pr}_{\mathfrak{h}}(e_i-e_j)=ae_i+be_j+\tfrac{-a-b}{n-1}
(e_1+\cdots+e_{n+1}-e_i-e_j).
$$
Similarly,
$$
\mathrm{pr}_{\mathfrak{h}}(e_k-e_l)=ce_3+de_4+
\tfrac{-c-d}{n-1}(e_1+\cdots+e_{n+1}-e_k-e_l).
$$
Now (\ref{proj-coef}) tells us
\begin{eqnarray}
\langle\mathrm{pr}_{\mathfrak{h}}(e_i-e_j),\mathrm{pr}_{\mathfrak{h}}(e_k-e_l)
\rangle=\left (\tfrac{n-3}{(n-1)^2}-\tfrac{2}{n-1}\right )(a+b)(c+d)=0,
\end{eqnarray}
i.e. either $a+b=0$ or $c+d=0$.  If $a+b=0$ then
$\mathrm{pr}_{\mathfrak{h}}(e_i-e_j)$ is a multiple of $e_i-e_j$\,, and if
$c+d=0$ then $\mathrm{pr}_{\mathfrak{h}}(e_k - e_l)$ is a multiple of
$e_k - e_l$\,.  If $a+b=0$, so
$\mathrm{pr}_{\mathfrak{h}}(e_i-e_j) = r(e_i - e_j)$,
then $\mathrm{pr}_{\mathfrak{h}}^2(e_i-e_j)
= \mathrm{pr}_{\mathfrak{h}}(e_i-e_j)$ so $r^2 = r$; either
$r = 0$ and $e_i - e_j \in \mathfrak{m}$ or $r = 1$ and
$e_i - e_j \in \mathfrak{h}$\,. Similarly if $c+d=0$ then
either $e_k - e_l \in \mathfrak{m}$ or $e_k - e_l \in \mathfrak{h}$.
So if there is a root $\alpha$ contained neither in $\mathfrak{h}$
nor in $\mathfrak{m}$, then any roots orthogonal to it are contained
either in $\mathfrak{h}$ or in $\mathfrak{m}$.
\smallskip

Suppose that there is a root $\alpha$
contained neither in $\mathfrak{h}$ nor in $\mathfrak{m}$. Applying a
Weyl group element we may assume $\alpha = e_1 - e_2$.  Then any root
$e_i-e_j$, $2<i<j\leqq n+1$, is contained in $\mathfrak{h}$ or
$\mathfrak{m}$, and all such roots must be contained in the same subspace.
Suppose they all belong to $\mathfrak{h}$; the argument will be the
same if they all belong to $\mathfrak{m}$.  Now $e_3-e_4 \in \mathfrak{h}$
and $\langle e_1-e_3,e_3-e_4\rangle\neq 0$ shows
$e_1-e_3\notin\mathfrak{m}$. If $e_1-e_3\notin\mathfrak{h}$,
then by the above argument, $e_2-e_4\in\mathfrak{h}$.
Suitably permuting the $e_i$, we see
$e_i-e_j\in\mathfrak{h}$ for $1<i<j\leqq n+1$, so
$\mathfrak{m}=\mathbb{R}(ne_1-e_2-\cdots-e_{n+1})$.
Recall $v=a_1e_1+\cdots+a_{n+1}e_{n+1}$ with $\sum a_i = 0$.   All
$\mathrm{pr}_{\mathfrak{m}}(\rho(v))$ have the same
length for any $\rho$ in the Weyl group, in other words
$|a_1n - a_2 - \dots - a_{n+1}|= (n+1)|a_1|$ is constant under permutations
of the $a_i$\,.  Thus $n$ is odd, say $n=2k-1$, and after a
suitable permutation of the $e_i$\,, $v$ is a scalar
multiple of $(e_1+\cdots+e_k) - (e_{k+1}+\cdots + e_{n+1})$.
\smallskip

On the other hand suppose that every root $\alpha$ is contained in either
$\mathfrak{h}$ or $\mathfrak{m}$, say
$\Delta = \Delta_\mathfrak{h} \cup \Delta_\mathfrak{m}$.
If $\Delta_\mathfrak{m} = \emptyset$ then $\mathfrak{t} \subset \mathfrak{h}$
and the CK vector field $v$ has a zero, forcing $v = 0$, which contradicts
the hypothesis $v \ne 0$.  If $\Delta_\mathfrak{h} = \emptyset$ then
$\mathfrak{t} \subset \mathfrak{m}$, contradicting our construction of
$\mathfrak{t}$, which starts with a Cartan subalgebra of $\mathfrak{h}$.
Thus $\Delta_\mathfrak{h} \ne \emptyset$ and  $\Delta_\mathfrak{m}\ne \emptyset$.
Because $\langle\Delta_\mathfrak{h},\Delta_\mathfrak{m}\rangle=0$, this is
a contradiction with the fact that $\mathfrak{g}$ is simple. That
completes the proof of Assertion (1).
\smallskip

(2) Assume $\mathfrak{g}=\mathfrak{b}_n$ with $n>1$.   If the Weyl group
orbit $W(v)$ of $v$ contains a multiple of $e_1$ then Assertion (2) is proved
for $\mathfrak{g}=\mathfrak{b}_n$\,.  Now suppose that $W(v)$ does not
contain a multiple of $e_1$.  Express $v=a_1 e_1+\cdots+a_n e_n$; then
at least two of the coefficients $a_i$ are nonzero.  Any two short roots, $e_i$ and $e_j$ with $i\neq j$, are orthogonal to
each other, so we can suitably choose $v$ from its Weyl group orbit such that
$a_i\neq 0$ and $a_j\neq 0$. Applying Lemma \ref{lemma-3-1}, we see
$$
\langle\mathrm{pr}_{\mathfrak{h}}(e_i),\mathrm{pr}_{\mathfrak{h}}(e_j)\rangle=
\langle\mathrm{pr}_{\mathfrak{m}}(e_i),\mathrm{pr}_{\mathfrak{m}}(e_j)\rangle=0
\text{ whenever } i\neq j.
$$
These can only be true when each $e_i$ is contained in either $\mathfrak{t}\cap\mathfrak{h}$
or $\mathfrak{t}\cap\mathfrak{m}$.  Then, suitably permute the
$e_i$\,, we can assume that $\{e_1$, $\ldots$, $e_h\}$ spans
$\mathfrak{t}\cap\mathfrak{h}$, and $\{e_{h+1}. \ldots , e_n\}$ spans
$\mathfrak{t}\cap\mathfrak{m}$.
All  $\mathrm{pr}_{\mathfrak{h}}(\rho(v))$, $\rho$ in the Weyl group,
have the same length. In particular $v=a_1 e_1+\cdots+a_n e_n$
must satisfy $|a_1|=\cdots=|a_n|$. By suitable scalar changes and Weyl group
actions, we have $v=e_1+\cdots+e_n$.  That completes the proof of Assertion (2)
for $\mathfrak{g}=\mathfrak{b}_n$.  The proof for $\mathfrak{g}=\mathfrak{c}_n$
is similar.
\smallskip

Now assume $\mathfrak{g}=\mathfrak{d}_n$ with $n>3$. The root system
of $\mathfrak{d}_n$ contains two subsystems of type $\mathfrak{a}_{n-1}$
whose intersection is of type $\mathfrak{a}_{n-2}$.   If the Weyl group
orbit $W(v)$ contains a scalar multiple of $e_1$ or of
$e_1$ or $e_1+\cdots + e_n$ then Assertion (2) follows.  If it
contains a scalar multiple of $e_1+\cdots + e_{n-1} - e_n$ we apply the
outer automorphism that restricts to $e_1+\cdots + e_{n-1} - e_n \mapsto
e_1+\cdots + e_{n-1} + e_n$\,, and Assertion (2) follows.  Now suppose
that neither of these holds: $W(v)$ contains neither a multiple of $e_1$
nor a multiple of $e_1+\cdots + e_{n-1} \pm e_n$\,.
Then $v=a_1e_1+\cdots+a_ne_n$ has two nonzero coefficients and not all
the $|a_i|$ are equal.  If
$i,j,k \text{ and } l$ are distinct we have $v'=a'_1e_1+\cdots+a'_ne_n \in W(v)$
such that $a'_i\neq \pm a'_j$ and $a'_k\neq a'_l$,
and $v''=a''_1 e_1+\cdots+a''_n e_n \in W(v)$
such that $a''_i\neq \pm a''_j$ and $a''_k\neq -a''_l$.
Apply Lemma \ref{lemma-3-1} to $\alpha=e_i\pm e_j$ and $\beta=e_k\pm e_l$,
or $\alpha=e_i+e_j$ and $\beta=e_i-e_j$,  the result is
$$
\langle\mathrm{pr}_{\mathfrak{h}}(e_i\pm e_j),
	\mathrm{pr}_{\mathfrak{h}}(e_k\pm e_l) \rangle=
\langle\mathrm{pr}_{\mathfrak{m}}(e_i\pm e_j),
	\mathrm{pr}_{\mathfrak{m}}(e_k\pm e_l)\rangle=0
$$
and
$$
\langle\mathrm{pr}_{\mathfrak{h}}(e_i+e_j),
\mathrm{pr}_{\mathfrak{h}}(e_i-e_j)\rangle
=\langle\mathrm{pr}_{\mathfrak{m}}(e_i+e_j),
\mathrm{pr}_{\mathfrak{m}}(e_i-e_j)\rangle=0,
$$
whenever $i$, $j$, $k$ and $l$ are distinct.
This is only possible when the each one of $\pm e_i\pm e_j$ is contained
in $\mathfrak{t}\cap\mathfrak{h}$ or
$\mathfrak{t}\cap\mathfrak{m}$. By an argument similar that used to prove (1),
we have either $\mathfrak{t}\subset\mathfrak{h}$ and the Riemannian normal
homogeneous space $M=G/H$ has no nonzero CK vector field, or
$\mathfrak{t}\cap\mathfrak{h}=0$ which contradicts with our construction of $\mathfrak{t}$.
\end{proof}
\smallskip

Proposition \ref{prop-3-2} is the key step in the proof of Theorem \ref{main}.
It reduces our discussion for each classical $\mathfrak{g}$ to very few
possibilities for the vector $v$. In the next section, we will apply this
proposition to each exceptional $\mathfrak{g}$ and show there does not exist
any nonzero CK vector field in those cases.
\smallskip

\section{Proof of Theorem \ref{main} for $\mathfrak{g}$ Exceptional}
\setcounter{equation}{0}

In this section, we will apply Proposition \ref{prop-3-2} to prove
Theorem \ref{main}
when $\mathfrak{g}$ is a compact exceptional simple Lie algebra.
The proof is a case by case discussion.
\smallskip

(1) The case $\mathfrak{g}=\mathfrak{g}_2$ has already been proven in
Proposition \ref{easy-no}.
\smallskip

(2) Let $\mathfrak{g}=\mathfrak{f}_4$. We use the standard presentation
(\ref{root-system-F-4}) for its root system. Its root system has a
subsystem of type $\mathfrak{b}_4$, which defines a subgroup $W'$ of the
Weyl group $W$.  By the argument for the case of
$\mathfrak{b}_n$ in Proposition \ref{prop-3-2},
if $0 \ne v \in \mathfrak{g}$ defines a CK vector field on $M$, we can
re-scale it and use the $W'$ action and assume that either $v=e_1$ or
$v=\frac12(e_1+\cdots+e_4)$. But those belong to the same orbit for $W$.
Considering $v=\frac12(e_1+\cdots+e_4)$, it follows that each of
$\mathfrak{t}\cap\mathfrak{h}$ and $\mathfrak{t}\cap\mathfrak{m}$ is
linearly spanned by a non-empty subset of $\{e_1,\ldots,e_4\}$. Then use $v = e_1$
in the same Weyl group orbit, $||\mathrm{pr}_{\mathfrak{m}}(\rho(v))||$
varies with $\rho \in W'$, contradicting the CK property of $v$.
We conclude that if $\mathfrak{g}=\mathfrak{f}_4$ then $M = G/H$ has
no nonzero CK vector field.
\smallskip

(3) Let $\mathfrak{g}=\mathfrak{e}_6$.  We use the standard presentation
(\ref{root-system-E-6}) for its root system. Its root system has a
subsystem of type $\mathfrak{d}_5$, which defines subgroup $W'$ of the
Weyl group $W$ of $\mathfrak{g}$. Suppose that
$0 \ne v\in\mathfrak{g}$ defines a CK vector field on the Riemannian
normal homogeneous space $M = G/H$. Using the reflections for roots of the form
$\frac12(\pm e_1\pm\cdots\pm e_5\pm\sqrt{3}e_6)$, we can assume that
$v=a_1e_1+\cdots+a_6e_6$ has three nonzero coefficients among the first
five $a_i$\,. By the argument in the $\mathfrak{d}_n$ case of Proposition
\ref{prop-3-2}, if $1\leqq i<j\leqq 5$ then
$\mathrm{pr}_{\mathfrak{h}}(e_i\pm e_j)$ and
$\mathrm{pr}_{\mathfrak{m}}(e_i\pm e_j)$ are four orthogonal vectors in
$\mathbb{R}e_i+\mathbb{R}e_j+\mathbb{R}e_6$. So if $1\leqq i<j\leqq 5$ then
either $e_i+e_j$ or $e_i-e_j$ is contained in
$\mathfrak{t}\cap\mathfrak{h}$ or in $\mathfrak{t}\cap\mathfrak{m}$, and all
those roots define the same subspace.
We will argue the case where they are in $\mathfrak{t}\cap\mathfrak{h}$;
with very minor modifications our argument also works when they are in
$\mathfrak{t}\cap\mathfrak{m}$.  In that case
there are two possibilities: (1) there exist $i$ and $j$ with $1\leqq i<j<6$
and both $\pm e_i\pm e_j$ contained in $\mathfrak{t}\cap\mathfrak{h}$,
or (2) whenever $1\leqq i<j<6$ either $e_i-e_j$ or $e_1+e_j$ is not
contained in $\mathfrak{t}\cap(\mathfrak{h}\cup\mathfrak{m})$.
\smallskip

Suppose that whenever $1\leqq i<j<6$ either $e_i-e_j$ or $e_i+e_j$ is not
contained in $\mathfrak{t}\cap(\mathfrak{h}\cup\mathfrak{m})$.
By suitably choosing the first five $e_i$\,, in the $\mathfrak{d}_5$
where $W'$ acts, we can assume $e_i-e_j\in\mathfrak{h}$ and
$e_i+e_j\notin\mathfrak{h}\cup\mathfrak{m}$ for $1\leqq i<j<5$.  Let $\rho$
be the reflection in the root $\frac12(e_1-e_2+e_3+e_4+e_5+\sqrt{3}e_6)$.
Denote $e'_i=\rho(e_i)$. Apply the above argument to the new basis
$\{e'_1,\cdots,e'_6\}$.  Then for $1\leqq i<j<6$, either
$e'_i+e'_j$ or $e'_i-e'_j$ is not contained in
$\mathfrak{t}\cap(\mathfrak{h}\cup\mathfrak{m})$.
Because $e'_1+e'_2=e_1+e_2$ is not contained in
$\mathfrak{t}\cap(\mathfrak{h}\cup\mathfrak{m})$, and because
$e'_1-e'_2=\frac12(e_1-e_2-e_3-e_4-e_5-\sqrt{3}e_6)$ is not orthogonal to
$e_1-e_2\in\mathfrak{t}\cap\mathfrak{h}$, we have
\begin{equation}\label{001}
\tfrac{1}{2}(e_1-e_2-e_3-e_4-e_5-\sqrt{3}e_6)\in
	\mathfrak{t}\cap\mathfrak{h}.
\end{equation}
If we use the reflection in the root
$\frac12(e_1-e_2-e_3-e_4+e_5+\sqrt{3}e_6)$, the above argument shows
\begin{equation}\label{002}
\tfrac{1}{2}(e_1-e_2+e_3+e_4-e_5-\sqrt{3}e_6)\in\mathfrak{h}.
\end{equation}
By (\ref{001}) and (\ref{002}), $e_3+e_4\in\mathfrak{t}\cap\mathfrak{h}$,
which contradicts our assumption.
Thus there exist $i$ and $j$ such that $1\leqq i<j<6$
and the $\pm e_i\pm e_j$ are contained in $\mathfrak{t}\cap\mathfrak{h}$,  Then
$\mathfrak{h}=\mathbb{R}e_1+\cdots+\mathbb{R}e_5$ and
$\mathfrak{m}=\mathbb{R}e_6$.
\smallskip

Let $\rho$ be the reflection in a root of the form
$\frac12(\pm e_1\pm e_2 \pm e_3 \pm e_4 \pm e_5\pm\sqrt{3}e_6)$, and denote
$e'_i=\rho(e_i)$ for $1\leqq i\leqq 6$.  The $e'_i$ are another orthonormal
basis of $\mathfrak{t}$ for which the root system $\Delta$ is given by
(\ref{root-system-E-6}). In particular the $\pm e'_i\pm e'_j$, $1\leqq i<j<6$
give a root system of $\mathfrak{d}_5$ in $\Delta$. The above argument also
implies $\mathfrak{m}=\mathbb{R}e'_6$\,.
But $\mathbb{R}e'_6\neq\mathbb{R}e_6$. This is a contradiction.  We conclude
that there is no nonzero vector $v$ that defines a CK vector field for
the Riemannian normal homogeneous space $G/H$.
\smallskip

(3) Use the standard presentation (\ref{root-system-E-7}) for the
root system of $\mathfrak{e}_7$, and apply an argument similar to the one
above for $\mathfrak{e}_6$.  Arguing {\em mutatis mutandis} we see that,
when $\mathfrak{g}=\mathfrak{e}_7$, there is no nonzero vector $v$ that
defines a CK vector field for the Riemannian normal homogeneous space $G/H$.
\smallskip

(4) Let $\mathfrak{g}=\mathfrak{e}_8$.  We use the standard presentation
(\ref{root-system-E-8}) for its root system. Its root system
contains a root system of type $\mathfrak{d}_8$, and the Weyl
group $W'$ of that $\mathfrak{d}_8$ is of course
a subgroup of the Weyl group $W$ of $\mathfrak{g}$.   Suppose that
a nonzero vector $v\in\mathfrak{t}$ defines a CK vector field on the
Riemannian normal homogeneous space $G/H$. The argument for
the case of $\mathfrak{d}_n$ for Proposition \ref{prop-3-2}
can be applied here to show that, up to scalar multiplications and 
the action of
$W'$, either $v=e_1$ or $v=e_1+\cdots+e_7\pm e_8$. In either case, the
reflection in the root $\frac12(e_1+e_2-e_3-\cdots-e_8)$ maps $v$ to another
$v'=a_1e_1+\cdots+a_8e_8$ such that there are at least two nonzero $a_i$s and
not all $|a_i|$s are the same. We have shown
$||\mathrm{pr}_{\mathfrak{h}}(\rho(v))||$ is not a constant function
for all $\rho\in W'$ in the proof of Proposition \ref{prop-3-2}. This
contradicts our assumption on $v$. So there is no nonzero
$v\in\mathfrak{g}$ defining a CK vector field on $M = G/H$.
\smallskip

In summary, we have proved
\smallskip

\begin{proposition}\label{proposition-4-1}
Let $G$ be a compact connected exceptional simple Lie group,
and $H$ a closed subgroup with $0<\dim H<\dim G$. Then there is no nonzero
vector $v \in \mathfrak{g}$ that defines a CK vector field on the
Riemannian normal homogeneous space $M=G/H$.
\end{proposition}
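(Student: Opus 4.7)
The plan is a case-by-case sweep over the five compact exceptional types. The $\mathfrak{g}_2$ case is already disposed of by Proposition \ref{easy-no}, so the substantive work concerns $\mathfrak{f}_4$, $\mathfrak{e}_6$, $\mathfrak{e}_7$, and $\mathfrak{e}_8$. The organizing observation is that each of these root systems contains a canonical classical subsystem $\Delta_0$, of type $\mathfrak{b}_4$, $\mathfrak{d}_5$, $\mathfrak{d}_6$, and $\mathfrak{d}_8$ respectively, whose Weyl group $W_0$ is a subgroup of the full Weyl group $W$. Assuming for contradiction that a nonzero $v\in\mathfrak{t}$ defines a CK vector field on $G/H$, I would first transfer Proposition \ref{prop-3-2} to the subsystem setting: its proof uses only Lemma \ref{lemma-3-1} applied to pairs of roots in the classical system and orbit combinatorics under the classical Weyl group, and simplicity of the ambient algebra is not invoked in the step that restricts the shape of $v$. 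So the $\mathfrak{b}_n$ or $\mathfrak{d}_n$ argument applied to $\Delta_0$ forces $v$, up to scalar and a $W_0$-move, into one of the short Bourbaki forms $v=e_1$ or $v=e_1+\cdots+e_k$.

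The remaining task is to use the extra reflections in $W\setminus W_0$, namely the spinor-type reflections in half-sum roots, to rule out each surviving candidate. For $\mathfrak{f}_4$ the two candidates $e_1$ and $\tfrac12(e_1+\cdots+e_4)$ lie in a single $W$-orbit, so both must be simultaneously consistent with the $\mathfrak{b}_4$-analysis: the form $\tfrac12(e_1+\cdots+e_4)$ forces each $e_i$ into $\mathfrak{t}\cap\mathfrak{h}$ or $\mathfrak{t}\cap\mathfrak{m}$, while the form $e_1$ then violates constancy of $\|\mathrm{pr}_{\mathfrak{m}}(\rho(v))\|$ under coordinate permutations. For $\mathfrak{e}_8$ one applies a spinor reflection such as that in $\tfrac12(e_1+e_2-e_3-\cdots-e_8)$ to the $\mathfrak{d}_8$-candidate; the image has coordinates of unequal absolute value, which contradicts the uniformity of $|a_i|$ that was needed for constancy of $\|\mathrm{pr}_{\mathfrak{h}}(\cdot)\|$ along the $\mathfrak{d}_8$-orbit. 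The $\mathfrak{e}_7$ case runs on the same template with the $\mathfrak{d}_6$ subsystem.

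The main obstacle, which I would tackle last, is $\mathfrak{e}_6$. Here the $\mathfrak{d}_5$ normal form does not clash with any single spinor reflection, and the argument must bifurcate. Either some complete pair $\{e_i+e_j,\,e_i-e_j\}$ with $1\le i<j\le 5$ lies in $\mathfrak{t}\cap\mathfrak{h}$, in which case one shows $\mathfrak{m}=\mathbb{R}e_6$; applying a spinor reflection then permutes the role of $e_6$ among the other coordinates, so $\mathfrak{m}$ would have to equal two distinct lines, a contradiction. Or no such complete pair exists; then I would iterate two carefully chosen spinor reflections, for instance those in $\tfrac12(e_1-e_2+e_3+e_4+e_5+\sqrt{3}e_6)$ and $\tfrac12(e_1-e_2-e_3-e_4+e_5+\sqrt{3}e_6)$, and combine the memberships they force in $\mathfrak{h}$ to produce a root of the form $e_i+e_j$ lying in $\mathfrak{t}\cap\mathfrak{h}$, contradicting the subcase assumption. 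This half-sum bookkeeping is the delicate part of the proof; the other exceptional cases are comparatively routine once the classical reduction from Proposition \ref{prop-3-2} is in place.
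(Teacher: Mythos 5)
Your proposal is correct and follows essentially the same route as the paper: the $\mathfrak{g}_2$ case via Proposition \ref{easy-no}, reduction of $v$ to the classical normal forms through the $\mathfrak{b}_4$, $\mathfrak{d}_5$, $\mathfrak{d}_6$ and $\mathfrak{d}_8$ subsystems using the arguments of Proposition \ref{prop-3-2}, and then contradictions obtained from the half-sum (spinor) reflections, including the same two reflections and the same dichotomy in the $\mathfrak{e}_6$ case and the same reflection $\tfrac12(e_1+e_2-e_3-\cdots-e_8)$ for $\mathfrak{e}_8$. There is no substantive difference from the paper's proof.
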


\section{Proof of Theorem \ref{main} for $\mathfrak{g}=\mathfrak{a}_n$}
\setcounter{equation}{0}
In this section  $\mathfrak{g}=\mathfrak{a}_n=\mathfrak{su}(n+1)$ and
$0\ne v\in\mathfrak{t}$ defines a CK vector field on the Riemannian normal
homogeneous space $M = G/H$.  Proposition \ref{prop-3-2} says that, up
to the action of the Weyl group, either $v$ is a multiple of
$ne_1 - e_2 - \dots - e_{n+1}$ or $n+1 = 2k$ and $v$ is a multiple of
$(e_1 + \dots + e_k)- (e_{k+1} + \dots + e_{n+1})$.  However we must see
whether those vectors $v$ actually define CK vector fields. The case
$n = 1$ is trivial. If there are nonzero CK vectors fields, then
$\dim H=0$. The case $n = 2$ has been proven in Proposition \ref{easy-no}.
So we assume $n>2$.
\smallskip

\subsection{The case $n=2k-1$ is odd and $v=(e_1+\cdots+e_k) -
(e_{k+1}+\cdots + e_{n+1})$}
\smallskip

From the argument in Proposition \ref{prop-3-2},
either
$\mathfrak{t}\cap\mathfrak{h}=\mathbb{R}(ne_1-e_2-\cdots-e_{n+1})$, or
$\mathfrak{t}\cap\mathfrak{m}=\mathbb{R}(ne_1-e_2-\cdots-e_{n+1})$, up
to the action of the Weyl group.
\smallskip

Suppose $\mathfrak{t}\cap\mathfrak{h}=\mathbb{R}(ne_1-e_2-\cdots-e_{n+1})$.
Then either $\mathfrak{h}=\mathbb{R}(ne_1-e_2-\cdots-e_{n+1})$ or
$\mathfrak{h}$ is the $\mathfrak{a}_1$ with Cartan subalgebra
$\mathfrak{h} \cap \mathfrak{t}
=\mathbb{R}(ne_1-e_2-\cdots-e_{n+1})$.  In the $\mathfrak{a}_1$
case the root plane $\mathfrak{j}$ of $\mathfrak{h}$ relative to $\mathfrak{t}\cap\mathfrak{h}$
is contained
in $\mathfrak{g}_{\pm(e_1-e_2)}+\cdots+\mathfrak{g}_{\pm(e_1-e_{n+1})}$,
Direct calculation shows $[\mathfrak{j},\mathfrak{j}] \not\subset
\mathfrak{t}\cap\mathfrak{h} + \mathfrak{j}$ so
$\mathfrak{t}\cap\mathfrak{h} + \mathfrak{j}$ is not a Lie algebra.  This
eliminates the $\mathfrak{a}_1$ case.  Thus
$\mathfrak{h}=\mathbb{R}(ne_1-e_2-\cdots-e_{n+1})$.  Let $g \in G$ with
$\Ad(g)v = \sqrt{-1}\,\mathrm{diag}(\left(
                                    \begin{smallmatrix}
                                      0 & 1 \\
                                      1 & 0 \\
                                    \end{smallmatrix}
                                  \right),1,-1,\cdots,1,-1)$.
Then $\langle \Ad(g)v, ne_1-e_2-\cdots-e_{n+1} \rangle = 0$ while
$\langle v, ne_1-e_2-\cdots-e_{n+1} \rangle = n+1$, so $v$ cannot
define a CK vector field on $M = G/H$.  We have proved
$\mathfrak{t}\cap\mathfrak{h} \ne \mathbb{R}(ne_1-e_2-\cdots-e_{n+1})$,
so, up to the action of the Weyl group, we assume
$\mathfrak{t}\cap\mathfrak{m}=\mathbb{R}(ne_1-e_2-\cdots-e_{n+1})$.
\smallskip

If $0 \ne \gamma\in\mathfrak{t}\cap\mathfrak{h}$ then $\gamma \perp e_1$ so
$\dim\widehat{\mathfrak{g}}_{\pm\gamma}=0$ or $2$.
The root planes of $\mathfrak{h}$ are root planes of $\mathfrak{g}$
for roots orthogonal to $e_1$\,. Consider one such,
$\mathfrak{g}_{\pm(e_i-e_j)}$ where $1<i<j\leqq n+1$, which is not a root
plane of $\mathfrak{h}$. Then it is contained in $\mathfrak{m}$.
Permuting the  $e_l$ we may assume $i=3$ and $j=4$. Then we have
$$
v'=\sqrt{-1}\,\mathrm{diag}(1,-1,\left(
                                    \begin{smallmatrix}
                                      0 & 1 \\
                                      1 & 0 \\
                                    \end{smallmatrix}
                                  \right),1,-1,\cdots,1,-1) \in \mathrm{Ad}(G)v.
$$
But $||\mathrm{pr}_{\mathfrak{m}}(v')||>||\mathrm{pr}_{\mathfrak{m}}(v)||$,
which is a contradiction. This proves
$\mathfrak{g}_{\pm(e_i-e_j)}\in\mathfrak{h}$ for $1<i<j\leqq n+1$.
The other root planes of $\mathfrak{g}$ involve $e_1$ in the root, so they are
all contained in $\mathfrak{m}$. In conclusion
$\mathfrak{h}$ is a standard $\mathfrak{su}(n)$ in
$\mathfrak{g}=\mathfrak{su}(n+1)$, and the universal cover of $M=G/H$ is
$S^{2n+1}=\mathrm{SU}(n+1)/\mathrm{SU}(n)$ with $n>1$.
\smallskip

{\em Remark.}
The vector $v=(e_1+\cdots+e_k)-(e_{k+1} + \cdots + e_{n+1})$ defines a CK
vector field 
on the sphere $S^{2n+1} = \mathrm{SO}(2n+2)/\mathrm{SO}(2n+1)$.  However,
the Riemannian normal homogeneous metric on
$S^{2n+1}=\mathrm{SU}(n+1)/\mathrm{SU}(n)$ is not Riemannian symmetric.
The isotropy representation for $S^{2n+1}=\mathrm{SU}(n+1)/\mathrm{SU}(n)$
decomposes $\mathfrak{m}=\mathfrak{m}_0+\mathfrak{m}_1$, in which
$\dim\mathfrak{m}_0=1$ with trivial $\mathrm{Ad}(H)$-action. Let
$\langle\cdot,\cdot\rangle_{\mathrm{bi}}$ be the inner
product on $\mathfrak{m}$ which defines the Riemannian symmetric metric
on $S^{2n+1}$.
The decomposition $\mathfrak{m}=\mathfrak{m}_0+\mathfrak{m}_1$ is orthogonal
for both $\langle\cdot,\cdot\rangle$ and $\langle\cdot,\cdot\rangle_{\mathrm{bi}}$.
By a suitable scalar change, $\langle\cdot,\cdot\rangle_{\mathrm{bi}}$ coincides with
$\langle\cdot,\cdot\rangle$ on $\mathfrak{m}_1$, and differs on $\mathfrak{m}_0$.
If the same $v = (e_1+\cdots+e_k)-(e_{k+1} + \cdots + e_{n+1})$ defines 
a CK vector field on $S^{2n+1}=\mathrm{SU}(n+1)/\mathrm{SU}(n)$
for the Riemannian normal homogeneous metric,
then by the general observations at the beginning of Section 3,
$\mathrm{Ad}(G)v$ is contained in a hyperplane in $\mathfrak{g}$ which is
parallel to $\mathfrak{h}+\mathfrak{m}_1$. That would contradict the fact
$\mathfrak{g}$ is simple and $v$ is nonzero.
\smallskip

\subsection{The case $v=ne_1-e_2-\cdots-e_{n+1}$}

Suppose that some $\mathfrak{g}_{\pm(e_i-e_j)}\subset\mathfrak{m}$.
We may assume $i=1$ and $j=2$.  Then we have
$$
v'=-e_1+ne_2-e_3-\cdots-e_{n+1} \in \mathrm{Ad}(G)v
$$
and
$$
v''=\sqrt{-1}\,\mathrm{diag}\left ( \left(
                     \begin{smallmatrix}
                     (n-1)/2 & (n+1)/2 \\
                     (n+1)/2 & (n-1)/2 \\
                     \end{smallmatrix} \right),-1,\ldots,-1 \right )
\in \mathrm{Ad}(G)v.
$$
Note
$||\mathrm{pr}_{\mathfrak{h}}(v)||=||\mathrm{pr}_{\mathfrak{h}}(v')||
=||\mathrm{pr}_{\mathfrak{h}}(v'')||$. By construction,
$\mathrm{pr}_{\mathfrak{h}}(v)+\mathrm{pr}_{\mathfrak{h}}(v')
=2\mathrm{pr}_{\mathfrak{h}}(v'')$,
so $\mathrm{pr}_{\mathfrak{h}}(v)=\mathrm{pr}_{\mathfrak{h}}(v')$, i.e.
$e_1-e_2\in\mathfrak{t}\cap\mathfrak{m}$. From this argument
$\mathfrak{g}_{\pm(e_i-e_j)}\subset\mathfrak{m}$ only when
$e_i-e_j\in\mathfrak{t}\cap\mathfrak{m}$. In particular,
for any root $e_i-e_j\notin\mathfrak{t}\cap\mathfrak{m}$,
$\mathrm{pr}_{\mathfrak{h}}(e_i-e_j)$ is a root of $\mathfrak{h}$.
Similarly, $\mathfrak{g}_{\pm(e_i-e_j)}\subset\mathfrak{h}$ only
when $e_i-e_j\in\mathfrak{t}\cap\mathfrak{h}$.
\smallskip

It will be convenient to assume
$\mathrm{pr}_{\mathfrak{h}}(e_1+\cdots+e_{n+1})=0$, so that
$\mathrm{pr}_{\mathfrak{h}}$ is defined on the Euclidean space
$\mathbb{R}^{n+1}$ that contains $\mathfrak{t}$.
Denote $e'_i=\mathrm{pr}_{\mathfrak{h}}(e_i)$ for $1\leqq i\leqq n+1$.
Then $e'_i-e'_j$ is $0$ or a root of $\mathfrak{h}$.  The $e'_i$ generate
all the roots of $\mathfrak{h}$.
It follows that $\mathfrak{h}$ is a simple Lie algebra.
Summarizing the above argument, we have
\begin{lemma}\label{lemma-for-case-A-n}
Assume $\mathfrak{g}=\mathfrak{a}_n$ for $n>2$ and suppose that
$v=ne_1-e_2-\cdots-e_{n+1}\in\mathfrak{t}$ defines a CK vector field on
the normal homogeneous space $M=G/H$.   Then
\begin{description}
\item{\rm (1)} If $\mathfrak{g}_{\pm(e_i-e_j)}\subset\mathfrak{m}$, then
$e_i-e_j\in\mathfrak{t}\cap\mathfrak{m}$.
\item{\rm (2)} If $\mathfrak{g}_{\pm(e_i-e_j)}\subset\mathfrak{h}$, then
$e_i-e_j\in\mathfrak{t}\cap\mathfrak{h}$.
\item{\rm (3)} Denote $e'_i=\mathrm{pr}_{\mathfrak{h}}(e_i)$ for
$1\leqq i\leqq n+1$. Then the root system of $\mathfrak{h}$ is the set of
all nonzero vectors of the form
$e'_i-e'_j$. In particular, $\mathfrak{h}$ is a compact simple Lie algebra.
\end{description}
\end{lemma}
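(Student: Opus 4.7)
The plan is to exploit the CK condition on $v$ through a single three-element family $\{v,v',v''\}\subset\mathrm{Ad}(G)v$ supported in one $2\times 2$ block, engineered so that projection onto $\mathfrak{h}$ (or onto $\mathfrak{m}$) collapses $v''$ into the midpoint of the projections of $v$ and $v'$. Constancy of the projected norm will then force that midpoint to be degenerate, i.e.\ $\mathrm{pr}_{\mathfrak{h}}(v)=\mathrm{pr}_{\mathfrak{h}}(v')$.

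For (1), suppose $\mathfrak{g}_{\pm(e_i-e_j)}\subset\mathfrak{m}$ and use a Weyl permutation to reduce to $i=1$, $j=2$. Let $v'=-e_1+ne_2-e_3-\cdots-e_{n+1}$, obtained from $v$ by the coordinate swap $(12)$, and let
\[
v''=\sqrt{-1}\,\mathrm{diag}\!\left(\tfrac{1}{2}\begin{pmatrix}n-1 & n+1\\ n+1 & n-1\end{pmatrix},-1,\ldots,-1\right).
\]
Both lie in $\mathrm{Ad}(G)v$: $v'$ by the permutation matrix, and $v''$ by conjugation with the $\pi/4$ rotation $\tfrac{1}{\sqrt 2}\bigl(\begin{smallmatrix}1 & -1\\ 1 & 1\end{smallmatrix}\bigr)\in\mathrm{SO}(2)\subset\mathrm{SU}(n+1)$ acting on the first two coordinates, as a direct $2\times 2$ computation confirms. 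Writing
\[
v''=\tfrac{1}{2}(v+v')+\tfrac{n+1}{2}\sqrt{-1}(E_{1,2}+E_{2,1}),
\]
and noting that $\sqrt{-1}(E_{1,2}+E_{2,1})\in\mathfrak{g}_{\pm(e_1-e_2)}\subset\mathfrak{m}$, the $\mathfrak{h}$-projection satisfies $\mathrm{pr}_{\mathfrak{h}}(v'')=\tfrac{1}{2}(\mathrm{pr}_{\mathfrak{h}}(v)+\mathrm{pr}_{\mathfrak{h}}(v'))$. The CK hypothesis forces $\|\mathrm{pr}_{\mathfrak{h}}(v)\|=\|\mathrm{pr}_{\mathfrak{h}}(v')\|=\|\mathrm{pr}_{\mathfrak{h}}(v'')\|$, and the equality case of Cauchy--Schwarz yields $\mathrm{pr}_{\mathfrak{h}}(v)=\mathrm{pr}_{\mathfrak{h}}(v')$. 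Hence $v-v'=(n+1)(e_1-e_2)\in\mathfrak{m}$, so $e_1-e_2\in\mathfrak{t}\cap\mathfrak{m}$. Assertion (2) is the mirror statement, proved by the identical construction applied to $\mathrm{pr}_{\mathfrak{m}}$: under the hypothesis $\mathfrak{g}_{\pm(e_1-e_2)}\subset\mathfrak{h}$ the off-diagonal term of $v''$ now lies in $\mathfrak{h}$, and equating $\mathfrak{m}$-projected norms gives $\mathrm{pr}_{\mathfrak{m}}(v)=\mathrm{pr}_{\mathfrak{m}}(v')$.

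For (3), adjust the splitting $\mathfrak{g}=\mathfrak{h}+\mathfrak{m}$ so that $\mathrm{pr}_{\mathfrak{h}}(e_1+\cdots+e_{n+1})=0$; this extends $\mathrm{pr}_{\mathfrak{h}}$ consistently to $\mathbb{R}^{n+1}\supset\mathfrak{t}$ and defines $e'_i:=\mathrm{pr}_{\mathfrak{h}}(e_i)$. By (1)--(2), $e'_i=e'_j$ iff $e_i-e_j\in\mathfrak{t}\cap\mathfrak{m}$; otherwise the remarks preceding the lemma, combined with the decompositions \eqref{decomp-2}--\eqref{decomp-4}, identify $e'_i-e'_j$ as a root of $\mathfrak{h}$. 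Conversely, any root of $\mathfrak{h}$ has the form $\mathrm{pr}_{\mathfrak{h}}(\alpha)$ for $\alpha\in\Delta(\mathfrak{g},\mathfrak{t})$, hence of the form $e'_i-e'_j$. Letting $k$ be the number of distinct $e'_i$, the resulting root system is irreducible of type $\mathfrak{a}_{k-1}$; since the $e'_i$ span $\mathrm{pr}_{\mathfrak{h}}(\mathfrak{t})=\mathfrak{t}\cap\mathfrak{h}$, the ranks match, and $\mathfrak{h}\cong\mathfrak{su}(k)$, a simple compact Lie algebra.

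The main obstacle is exhibiting the explicit $\mathrm{SU}(2)\hookrightarrow\mathrm{SU}(n+1)$ intertwiner realizing $v''\in\mathrm{Ad}(G)v$; once that conjugation is in place, the rest reduces to the equality case of the parallelogram identity in $\mathbb{R}^{n+1}$ plus the standard identification of the $\mathfrak{a}$-type root system generated by the $e'_i-e'_j$.
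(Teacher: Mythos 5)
Your parts (1) and (2) are correct and are essentially the paper's own argument: the same auxiliary vectors $v'$ and $v''$, the identity $v''=\tfrac12(v+v')+\tfrac{n+1}{2}\sqrt{-1}(E_{1,2}+E_{2,1})$ with the off-diagonal term lying in $\mathfrak{g}_{\pm(e_1-e_2)}$, and the rigidity step (three equal projected norms with one projection the midpoint of the other two forces the two endpoints to coincide). The first half of your (3) — every root of $\mathfrak{h}$ is some $e'_i-e'_j$, and every nonzero $e'_i-e'_j$ is a root of $\mathfrak{h}$ — also matches the paper.

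The gap is the last step of (3). From the fact that the root system of $\mathfrak{h}$ is the set of nonzero differences $e'_i-e'_j$ you conclude that it is ``irreducible of type $\mathfrak{a}_{k-1}$,'' hence $\mathfrak{h}\cong\mathfrak{su}(k)$. That inference is false: the set of pairwise differences of finitely many vectors need not be an A-type root system, since distinct index pairs can give equal (or non-proportional but differently scaled) differences and the $e'_i$ need not form a regular simplex. Concretely, for $f_1=\epsilon_1$, $f_2=-\epsilon_1$, $f_3=\epsilon_2$, $f_4=-\epsilon_2$ the nonzero differences $f_a-f_b$ are exactly the $\mathfrak{c}_2$ root system $\{\pm 2\epsilon_1,\pm 2\epsilon_2,\pm\epsilon_1\pm\epsilon_2\}$. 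And this is precisely the situation the lemma is heading toward: the paper's subsequent analysis of coincidences $e'_i-e'_j=e'_p-e'_q$ shows that in fact $\mathfrak{h}\cong\mathfrak{sp}(k)$ with $n+1=2k$ (type $\mathfrak{c}_k$, two root lengths, all $e'_i$ distinct), which contradicts both your type-A identification and your rank count ($k-1$ versus $\dim(\mathfrak{t}\cap\mathfrak{h})$); the final model is $SU(2k)/Sp(k)$, not a quotient by $\mathfrak{su}(k)$. So as written your argument does not establish the stated conclusion that $\mathfrak{h}$ is compact simple, and its stronger claim is wrong. Simplicity does follow from the first half of (3), but by a different argument: the differences $e'_i-e'_j$ span $\mathfrak{t}\cap\mathfrak{h}$ (the $e'_i$ span it and sum to zero), so there is no center; and if the root system split into two nonempty mutually orthogonal parts, then choosing $e'_a-e'_b$ in one part and $e'_c-e'_d$ in the other and examining the differences $e'_a-e'_c$, $e'_a-e'_d$, $e'_b-e'_d$ one finds a nonzero difference that would have to be orthogonal to a decomposition of itself into roots from both parts, which is impossible — contradicting that every nonzero difference is a root. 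Some such argument (or the paper's later explicit determination of $\mathfrak{h}$) is needed in place of the type-A claim.
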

\smallskip

Now consider the case $e'_i-e'_j=e'_k-e'_l\neq 0$, in which $e'_i\neq e'_k$
or $e'_j\neq e'_l$ (both inequalities are satisfied). As we saw, $e'_i-e'_j$ is
a root of $\mathfrak{h}$, so $e'_j\neq e'_k$, for otherwise
$e'_i-e'_l=2(e'_i-e'_j)$ is a root of $\mathfrak{h}$; similarly $e'_i\neq e'_l$.
Then for the roots $\alpha'=e'_i-e'_j$ and $\beta'=e'_i-e'_k$ of
$\mathfrak{h}$, both $\alpha'+\beta'=e'_i-e'_l$ and
$\alpha'-\beta'=e'_k-e'_j$ are roots of $\mathfrak{h}$.
There are only two possibilities for this:
\begin{description}
\item{\rm (1)}
The roots $\alpha'$ and $\beta'$ of $\mathfrak{h}$ are short and have an
angle $\frac{\pi}{3}$ or $\frac{2\pi}{3}$, and
$\mathfrak{h}\cong \mathfrak{g}_2$.
\item{\rm (2)}
The roots $\alpha'$ and $\beta'$ of $\mathfrak{h}$ are short,
$\langle\alpha',\beta'\rangle=0$,
and $\pm\alpha'\pm\beta'$ are long roots.
\end{description}
\smallskip

Based on this observation, we will prove the following lemma.
\begin{lemma}\label{another-lemma-for-case-A-n}
Let $\mathfrak{g}=\mathfrak{a}_n$ with $n>2$ and suppose that
$v=ne_1-e_2-\cdots-e_{n+1}\in\mathfrak{t}$ defines a CK vector field
on the Riemannian normal homogeneous space $M=G/H$.  Denote
$e'_i=\mathrm{pr}_{\mathfrak{h}}(e_i)$, so $e'_1+\cdots+e'_{n+1}=0$.
Then we have the following.
\begin{description}
\item{\rm (1)}
No two $e'_i$ are equal.
\item{\rm (2)}
There exist distinct $i$, $j$, $k$ and $l$ such that $e'_i-e'_j=e'_k-e'_l$.
\item{\rm (3)}
If $i$, $j$, $k$ and $l$ are distinct and satisfy $e'_i-e'_j=e'_k-e'_l$,
then the roots $\alpha'$ and $\beta'$ of $\mathfrak{h}$ are short,
$\langle\alpha',\beta'\rangle=0$,
and $\pm\alpha'\pm\beta'$ are long roots.
\end{description}
\end{lemma}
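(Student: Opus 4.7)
The three parts nest: (1) supplies the distinctness of the $e'_i$ needed in (2), and (2) produces the repeated difference needed in (3). For part~(1), argue by contradiction: suppose $e'_i = e'_j$ for some $i \neq j$, and partition $\{1,\dots,n+1\}$ into equivalence classes $I_1,\dots,I_s$ under $a \sim b \Leftrightarrow e'_a = e'_b$, with common value $u_r$ on $I_r$. Because $\sum_i e'_i = 0$ and $\mathrm{rank}(\mathfrak{h}) \ge 1$ together forbid all $e'_i$ being equal, we can choose a class $I_a$ with $|I_a| \ge 2$ and a different class $I_b$. Pick distinct $i_1, i_2 \in I_a$ and any $j \in I_b$. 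Since $e'_{i_1} \neq e'_j$, the root $e_{i_1}-e_j$ restricts nontrivially to $\mathfrak{t}\cap\mathfrak{h}$, making $\mathfrak{g}_{\pm(e_{i_1}-e_j)}$ an irreducible real $T\cap H$-module lying entirely in $\mathfrak{h}$ or in $\mathfrak{m}$. Lemma~\ref{lemma-for-case-A-n}(1) forbids the $\mathfrak{m}$ alternative (which would force $e'_{i_1}=e'_j$), so Lemma~\ref{lemma-for-case-A-n}(2) gives $e_{i_1}-e_j\in\mathfrak{t}\cap\mathfrak{h}$, equivalently $e_{i_1}-e_j=e'_{i_1}-e'_j=u_a-u_b$ as vectors in $\mathfrak{t}\subset\mathbb{R}^{n+1}$. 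The same equation with $i_2$ in place of $i_1$ gives $e_{i_2}-e_j=u_a-u_b$; subtracting yields $e_{i_1}=e_{i_2}$, impossible as these are distinct standard basis vectors.

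For part~(2), assume (1) so the $e'_i$ are distinct, and suppose additionally that the differences $e'_i-e'_j$ are all distinct. Lemma~\ref{lemma-for-case-A-n}(3) then forces $\mathfrak{h}$ to be simple with exactly $n(n+1)$ roots. The nonzero-CK hypothesis forbids $v\in\mathfrak{h}$, so $\mathfrak{t}\cap\mathfrak{m}\neq 0$ and $\mathrm{rank}(\mathfrak{h})\le n-1$. Scanning the standard root counts $|\Delta(A_r)|=r(r+1)$, $|\Delta(B_r)|=|\Delta(C_r)|=2r^2$, $|\Delta(D_r)|=2r(r-1)$, together with the exceptional counts $12,48,72,126,240$, and imposing $\mathrm{rank}\le n-1$, the only candidates are $(\mathfrak{h},n)=(G_2,3)$, $(B_6\text{ or }C_6,8)$, $(E_6,8)$, $(E_8,15)$, $(D_{15},20)$, and a Pell-type tail of further $B_r,C_r,D_r$ matches. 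Each is excluded by comparing the smallest faithful complex-representation dimension of $\mathfrak{h}$ (namely $7$ for $G_2$, $12$ for $C_6$, $13$ for $B_6$, $27$ for $E_6$, $248$ for $E_8$, and at least $2r$ for the classical types) against $n+1$: the former strictly exceeds the latter in every case, so no embedding $\mathfrak{h}\hookrightarrow\mathfrak{su}(n+1)$ exists.

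For part~(3), by (2) pick distinct $i,j,k,l$ with $e'_i-e'_j=e'_k-e'_l$ and set $\alpha'=e'_i-e'_j$, $\beta'=e'_i-e'_k$. Then $\alpha'+\beta'=e'_i-e'_l$ and $\alpha'-\beta'=e'_k-e'_j$ are nonzero (by (1)), so all four are roots of $\mathfrak{h}$ by Lemma~\ref{lemma-for-case-A-n}(3). The parallelogram identity $|\alpha'+\beta'|^2+|\alpha'-\beta'|^2=2|\alpha'|^2+2|\beta'|^2$, combined with the fact that irreducible simple root systems admit only the length ratios $1$, $\sqrt{2}$, $\sqrt{3}$, leads (after a short case analysis) to exactly the two configurations from the bullets just before the lemma, both requiring $|\alpha'|=|\beta'|$: (a) $\alpha',\beta'$ short at angle $\pi/3$ or $2\pi/3$ with $\alpha'\pm\beta'$ of mixed short/long type of ratio $\sqrt{3}$, forcing $\mathfrak{h}\cong G_2$; or (b) $\alpha',\beta'$ short and orthogonal with $\alpha'\pm\beta'$ long of ratio $\sqrt{2}$. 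Configuration (a) is ruled out by a weight argument: if $\mathfrak{h}\cong G_2$, then the defining $(n+1)$-dimensional representation of $\mathfrak{su}(n+1)$ restricts to a $G_2$-representation whose weights are exactly $\{e'_1,\dots,e'_{n+1}\}$, but every nontrivial $G_2$-representation contains zero as a weight (the fundamental weights of $G_2$ lie in its root lattice, so every dominant weight is in $Q^+$, and Freudenthal's formula gives multiplicity $m_0\ge 1$). If $m_0\ge 2$, two of the $e'_i$ coincide, violating (1); if $m_0=1$, exactly one $e'_i$ is zero while the others are nonzero, violating the CK identity $(n+1)\|e'_k\|=\|\mathrm{pr}_\mathfrak{h}(v_k)\|$ constant in $k$.

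The main technical work is the $G_2$ exclusion in part~(3), which relies on the weight structure of $G_2$-representations; parts~(1) and (2) reduce to applications of Lemma~\ref{lemma-for-case-A-n} combined with standard root-system data and minimum representation dimensions.
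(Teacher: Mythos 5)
Your proof of part (1) has a genuine gap at its key step. You assert that because $e_{i_1}-e_j$ has nonzero projection to $\mathfrak{t}\cap\mathfrak{h}$, the root plane $\mathfrak{g}_{\pm(e_{i_1}-e_j)}$ is an irreducible $\mathfrak{t}\cap\mathfrak{h}$-module and therefore lies entirely in $\mathfrak{h}$ or entirely in $\mathfrak{m}$. Irreducibility does not give this: the decomposition $\mathfrak{g}=\mathfrak{h}+\mathfrak{m}$ only forces each isotypic block $\widehat{\mathfrak{g}}_{\pm\alpha'}$ to split as $(\widehat{\mathfrak{g}}_{\pm\alpha'}\cap\mathfrak{h})+(\widehat{\mathfrak{g}}_{\pm\alpha'}\cap\mathfrak{m})$, and an individual root plane inside such a block can sit diagonally, meeting neither summand, whenever two roots of $\mathfrak{g}$ have the same projection. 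That is exactly the situation you are in: the hypothesis $e'_{i_1}=e'_{i_2}$ makes $e_{i_1}-e_j$ and $e_{i_2}-e_j$ project to the same $\alpha'=u_a-u_b$, so $\dim\widehat{\mathfrak{g}}_{\pm\alpha'}\geq 4$ and the dichotomy fails precisely where you need it. (The genuine example $SU(2k)/Sp(k)$ shows this is not a technicality: there $\mathfrak{h}_{\pm(e'_i-e'_{j+k})}$ is a diagonal plane inside $\mathfrak{g}_{\pm(e_i-e_{j+k})}+\mathfrak{g}_{\pm(e_j-e_{i+k})}$, and those root planes of $\mathfrak{g}$ lie in neither $\mathfrak{h}$ nor $\mathfrak{m}$.) Lemma \ref{lemma-for-case-A-n}(1)--(2) apply only to root planes already known to lie in $\mathfrak{m}$ or $\mathfrak{h}$, so they cannot rescue the argument. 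This is why the paper's proof of (1) is long: it passes to the subgroup $G'\cong SU(k+l)$ via Lemma \ref{trivial-lemma}, pins down $\mathfrak{h}'\cong\mathfrak{a}_1$ by explicit matrix computation (forcing $k=l$, $A=I_k$), and then exhibits two elements of $\mathrm{Ad}(G')v'$ with different $\mathfrak{h}'$-projections, plus a separate subcase ruling out $\mathfrak{h}\cong\mathfrak{g}_2$. Your one-paragraph argument does not substitute for that work, and since (2) and (3) lean on (1), the proposal as a whole is not complete.

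Your parts (2) and (3), taken on their own, are essentially sound but follow different routes from the paper. For (2) you count $|\Delta'|=n(n+1)$ and exclude candidates by minimal faithful representation dimensions; this works, but note that the negation of (2) does not literally say all differences are distinct --- you must also exclude coincidences with a shared index, $e'_i-e'_j=e'_j-e'_l$, which is done (as in the paper's observation preceding the lemma) by noting it would make both $\alpha'$ and $2\alpha'$ roots of $\mathfrak{h}$. The paper's own argument for (2) is much shorter: with all differences distinct, every $\widehat{\mathfrak{g}}_{\pm\alpha'}$ is a single root plane, so every $e_i-e_j$ lands in $\mathfrak{t}\cap\mathfrak{h}$ or $\mathfrak{t}\cap\mathfrak{m}$, contradicting simplicity of $\mathfrak{g}$. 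For (3), your exclusion of the $\mathfrak{g}_2$ configuration via the zero-weight of the restricted defining representation (together with the constancy of $\|(n+1)e'_k\|$ over the Weyl orbit) is a clean alternative to the paper's reliance on the case analysis inside (1); if you repaired (1) by a different method, this weight argument would be a nice independent way to finish (3).
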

\smallskip

\begin{proof} (1)
For simplicity, we call $e'_i$ {\em single} if its pre-image
$\mathrm{pr}_{\mathfrak{h}}^{-1}(e'_i)$ consists of a single element $e_i$\,.
If $e'_1$ is not single, and
$\mathrm{pr}_{\mathfrak{h}}^{-1}(e'_1) = \{e_1,\ldots,e_k\}$, $k>1$,
then $e'_1-e'_{i}$ is root of $\mathfrak{h}$ for $i>k$.
Permuting $e_i$ for $i>k$, we can assume $e'_1-e'_{k+1}$ has the
largest length among all the roots of the form $e'_1-e'_i$, and
$\mathrm{pr}_{\mathfrak{h}}^{-1}(e'_{k+1})=
e_{k+1},\ldots,e_{k+l}$, $l>0$. If $e'_1-e'_{k+1}=e'_i-e'_j$ with
$i>k$, then by the observation before this lemma,
either there is a root $e'_1-e'_j$ longer than $e'_1-e'_{k+1}$,
which is a contradiction,
or $\mathfrak{h}=\mathfrak{g}_2$ and the angle between $e'_1-e'_{k+1}$ and
$e'_l-e'_i$ is $\frac{\pi}{3}$ or $\frac{2\pi}{3}$.
\smallskip

We first assume there do not exist $i$ and $j$ such that $i>k$ and
$e'_1-e'_{k+1}=e'_i-e'_j$.  Then
$
\mathfrak{h}_{\pm(e'_1-e'_{k+1})}\subset
\widehat{\mathfrak{g}}_{\pm(e'_1-e'_{k+1})}=\sum_{i=1}^k\sum_{j=1}^l
\mathfrak{g}_{\pm(e_i-e_{k+j})},
$
and direct calculation shows $e'_1-e'_{k+1}\in\mathbb{R}(e_1-e_2)+\cdots+
\mathbb{R}(e_{k+l-1}-e_{k+l})$.
Let $G'$ be the closed subgroup of $G$ with its algebra
$$\mathfrak{g}'=\sum_{1\leq i<k+l}\mathbb{R}(e_i-e_{i+1})
+\sum_{1\leq i<j\leq k+l}\mathfrak{g}_{\pm(e_i-e_j)}.$$  It is the standard
$\mathfrak{su}(k+l)$ in $\mathfrak{g}=\mathfrak{su}(n+1)$ corresponding to
the $(k+l)\times(k+l)$-block at the upper left corner. Let $H'$ be the
connected component of $G'\cap H$; its Lie algebra is
$\mathfrak{h}'=\mathfrak{g}'\cap\mathfrak{h}$.
The restriction of the bi-invariant inner product of $\mathfrak{g}$ to
$\mathfrak{g}'$
defines a Riemannian normal homogeneous space $G'/H'$. Then we have the
orthogonal decomposition $\mathfrak{g}'=\mathfrak{h}'+\mathfrak{m}'$,
which coincides with
$\mathfrak{g}'=\mathfrak{g}'\cap\mathfrak{h}+\mathfrak{g}'\cap\mathfrak{m}$.
Notice
$v=ne_1-e_2-\cdots-e_{n+1}$ can be decomposed as a sum of
$$
v'=\tfrac{n+1}{k+l}((k+l-1)e_1-e_2-\cdots-e_{k+l})\in\mathfrak{g}'
$$
and
$$
v''=(\tfrac{n+1}{k+l}-1)(e_1+\cdots+e_{k+l})-(e_{k+l+1}+\cdots+e_{n+1})\in
\mathfrak{c}_{\mathfrak{g}}(\mathfrak{g}'),
$$
with $\langle v'',\mathfrak{g}'\rangle=0$, so by Lemma \ref{trivial-lemma},
$v'=(k+l-1)e_1-e_2-\cdots-e_{k+l}$
defines a CK vector field on the Riemannian normal homogeneous space $G'/H'$.
The subalgebra $\mathfrak{h}'$ is isomorphic to $\mathfrak{a}_1$, which can
be assumed to be linearly spanned by
$$
u_1=\sqrt{-1}\left(
                 \begin{smallmatrix}
                   a I_k & 0 \\
                   0 & b I_l \\
                 \end{smallmatrix}
               \right), u_2=\left(
                              \begin{smallmatrix}
                                0 & A \\
                                -A^* & 0 \\
                              \end{smallmatrix}
                            \right),\mbox{ and } u_3=\sqrt{-1}\left(
                       \begin{smallmatrix}
                         0 & A \\
                         A^* & 0 \\
                       \end{smallmatrix}
                     \right),
$$
in which $ak+bl=0$, $I_k$ and $I_l$ are $k\times k$ and $l\times l$ identity
matrices respectively,
and $A$ is a $k\times l$-complex matrix.
Direct calculation for the condition $[u_2,u_3]\subset\mathbb{R}u_1$ indicates
$a=-b$, $k=l$, and by suitable scalar changes and unitary conjugations, we can
assume $A=I_k$. The orbit $\mathrm{Ad}(G')v'$ contains
$$
v'_1=\sqrt{-1}((k-1)(E_{1,1}+E_{k+1,k+1})+k(E_{1,k+1}+E_{k+1,1})-
\sum_{1\neq i\neq k+1}E_{i,i})
$$
and
$$
v'_2=\sqrt{-1}((k-1)(E_{1,1}+E_{2k,2k})+k(E_{1,2k}+E_{2k,1})-
\sum_{1\neq i\neq 2k}E_{i,i}).
$$
Thus $||\mathrm{pr}_{\mathfrak{h}'}(v'_1)||>
||\mathrm{pr}_{\mathfrak{h}'}(v'_2)||=0$, which is a contradiction.
We conclude that there do exist  $i$ and $j$ such that $i>k$ and
$e'_1-e'_{k+1}=e'_i-e'_j$.  Note then that $i \ne j$.
\smallskip

Now there exist $i \ne j$ such that $i>k$ and
$e'_1-e'_{k+1}=e'_i-e'_j$. In this case $\mathfrak{h}=\mathfrak{g}_2$.
For simplicity, we can suitably permute the $e_i$( not assuming $e'_1=\cdots=e'_k$ any more),
such that we have $e'_1-e'_2=e'_3-e'_4$, the angle between
the short roots $e'_1-e'_2=e'_3-e'_4$ and $e'_1-e'_3=e'_2-e'_4$ of
$\mathfrak{h}$ is $\frac{\pi}{3}$ or $\frac{2\pi}{3}$, and both $e'_1-e'_4$
and $e'_2-e'_3$ are roots of $\mathfrak{h}$ such that one is long and
the other is short. Assume $e'_2-e'_3$ is the
short root for example; assuming $e'_1-e'_4$ to be the short root introduces
only very minor changes in the following argument.
\smallskip

If $e'_2-e'_3=e'_p-e'_q$,
such that $e'_p\neq e'_2$ or $e'_q\neq e'_3$, then either $e'_p$ or $e'_q$
must be different from the $e'_r$s with $1\leqq r\leqq 4$. The short root
$e'_2-e'_p=e'_3-e'_q$ must be one of $\pm(e'_1-e'_2)=\pm(e'_3-e'_4)$
or $\pm(e'_1-e'_3)=\pm(e'_2-e'_4)$.
Then the same $e'_2$ or $e'_3$ appears in different presentations of
a short root of $\mathfrak{h}$, this contradicts our earlier observation.
So there do not exist $p$ and $q$ such that $e'_2-e'_3=e'_p-e'_q$
with either $e'_p\neq e'_2$ or $e'_q\neq e'_3$. Thus
$$
\widehat{\mathfrak{g}}_{\pm(e'_2-e'_3)}={\sum}_{e'_p=e'_2,e'_q=e'_3}\,\,
	\mathfrak{g}_{\pm(e_p-e_q)},
$$
Arguing as above, we see that both $e'_2$ and $e'_3$ are single. We can also get
$e'_2-e'_3\neq e_2-e_3$, otherwise $e'_2-e'_3$ reaches the maximal possible length
of $\mathfrak{h}$, which must be a long root, but we have assumed it is a short root.
By Lemma \ref{lemma-for-case-A-n},
$\mathfrak{h}_{\pm(e'_2-e'_3)}$ is not a root plane of $\mathfrak{g}$, i.e.
$\widehat{\mathfrak{g}}_{\pm(e'_2-e'_3)}=\sum_{e'_p=e'_2,e'_q=e'_3}\mathfrak{g}_{\pm(e_p-e_q)}$
has dimension bigger than 2. So $e'_2$ and $e'_3$ cannot both be single.
This is a contradiction.  Assertion (1) of
Lemma \ref{another-lemma-for-case-A-n} is proved.
\smallskip

(2) If there do not exist distinct indices $i$, $j$, $k$ and $l$ such that
$e'_i-e'_j=e'_k-e'_l$, Then by Lemma \ref{another-lemma-for-case-A-n}(1)
and Lemma \ref{lemma-for-case-A-n}, each root $e_i-e_j$
is either contained in $\mathfrak{t}\cap\mathfrak{h}$ or
$\mathfrak{t}\cap\mathfrak{m}$.  That is only possible when
$\mathfrak{t}\subset\mathfrak{m}$ or
$\mathfrak{t}\subset\mathfrak{h}$, which we have seen is not the case.
\smallskip

(3) follows from the argument of Lemma \ref{another-lemma-for-case-A-n}(1)
which shows that $\mathfrak{h} \not\cong \mathfrak{g}_2$.
\end{proof}
\smallskip

Now we determine $\mathfrak{h}$ by the following lemma.
\begin{lemma}
Let $\mathfrak{g}=\mathfrak{a}_n$ with $n>2$ and suppose that
$v=ne_1-e_2-\cdots-e_{n+1}\in\mathfrak{t}$ defines a CK vector field
on the Riemannian normal homogeneous space $M=G/H$. Keep all relevant notations. Then
for any
$1\leqq i\leqq n+1$, there is a unique
$j$ such that
$e_i-e_j=e'_i-e'_j\in\mathfrak{h}$ is a long root of $\mathfrak{h}$. 
Furthermore, $n$ is odd.
\end{lemma}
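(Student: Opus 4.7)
The plan is to establish existence and uniqueness of the partner $j$ for each $i$, then deduce the parity of $n$. The key observation driving both steps is: if $\gamma' = e'_p - e'_q$ is a \emph{long} root of $\mathfrak{h}$, then the pair $(p,q)$ producing it is unique. Indeed, by the contrapositive of Lemma~\ref{another-lemma-for-case-A-n}(3), any coincidence $e'_p - e'_q = e'_{p'} - e'_{q'}$ with distinct indices would force $\gamma'$ to be a \emph{short} root. Hence $\widehat{\mathfrak{g}}_{\pm\gamma'} = \mathfrak{g}_{\pm(e_p - e_q)}$ is only two-dimensional, the two-dimensional root space $\mathfrak{h}_{\pm\gamma'}$ contained in it must coincide with it, so $\mathfrak{g}_{\pm(e_p - e_q)} \subset \mathfrak{h}$, and by Lemma~\ref{lemma-for-case-A-n}(2) we conclude $e_p - e_q \in \mathfrak{t}\cap\mathfrak{h}$ with $\gamma' = e_p - e_q$ of length $\sqrt{2}$.

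For existence, Lemma~\ref{another-lemma-for-case-A-n}(2) produces a rectangle $(i_0, j_0, k_0, l_0)$, so $e'_{i_0} - e'_{l_0}$ is a long root of $\mathfrak{h}$ and the key observation yields the valid pair $(i_0, l_0)$. To extend to an arbitrary $i$, note that the Weyl group $W_\mathfrak{h}$ of $\mathfrak{h}$ acts transitively on $\{e'_1, \ldots, e'_{n+1}\}$: every difference $e'_p - e'_q$ is a root by Lemma~\ref{lemma-for-case-A-n}(3), and the corresponding reflection swaps $e'_p$ with $e'_q$ since they share the common norm $r$. Take $w \in W_\mathfrak{h}$ with $w(e'_{i_0}) = e'_i$ and define $j$ by $e'_j = w(e'_{l_0})$; then $e'_i - e'_j = w(e'_{i_0} - e'_{l_0})$ is again a long root, and the key observation gives $e_i - e_j \in \mathfrak{t}\cap\mathfrak{h}$.

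For uniqueness, suppose partners $j_1 \neq j_2$ both exist. Then $e_{j_1} - e_{j_2} = (e_i - e_{j_2}) - (e_i - e_{j_1}) \in \mathfrak{t}\cap\mathfrak{h}$ as well, giving by Lemma~\ref{lemma-for-case-A-n}(2) a third long root; the three long roots $e_i - e_{j_1}$, $e_i - e_{j_2}$, $e_{j_1} - e_{j_2}$ have pairwise inner product $\pm 1$ and span an $\mathfrak{a}_2$ subsystem inside the long-root subsystem of $\mathfrak{h}$. Among the simple two-length types, this $\mathfrak{a}_2$ is absent only in type $\mathfrak{c}_m$, whose long roots form the pairwise-orthogonal system $(\mathfrak{a}_1)^m$. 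The remaining two-length types $\mathfrak{b}_m$ and $\mathfrak{f}_4$ must therefore be excluded; I would do this by exploiting the constant-norm constraint $|e'_i| = r$ (a consequence of the Weyl-group action on $v$) to show that the embeddings realizing these types force some $e'_i$ to violate equal norm---for instance, the standard $\mathfrak{so}(2m+1) \hookrightarrow \mathfrak{su}(2m+1)$ sends one $e'_i$ to $0$, and a similar obstruction handles the $\mathfrak{f}_4$ case. This last ruling-out is the main technical obstacle.

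With uniqueness in hand, the assignment $i \mapsto j$ is a fixed-point-free involution of $\{1, \ldots, n+1\}$, so $n+1$ is even and $n$ is odd.
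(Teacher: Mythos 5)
Your \emph{key observation} is correct and coincides with the paper's middle step: a long root $e'_p-e'_q$ admits no second presentation (Lemma \ref{another-lemma-for-case-A-n}(1),(3)), so $\mathfrak{h}_{\pm(e'_p-e'_q)}=\widehat{\mathfrak{g}}_{\pm(e'_p-e'_q)}=\mathfrak{g}_{\pm(e_p-e_q)}$ and Lemma \ref{lemma-for-case-A-n}(2) gives $e_p-e_q\in\mathfrak{t}\cap\mathfrak{h}$. The first genuine gap is in your existence step for an arbitrary $i$. To ``define $j$ by $e'_j=w(e'_{l_0})$'' you need $w(e'_{l_0})$ to lie again in the set $\{e'_1,\ldots,e'_{n+1}\}$, i.e.\ that this set is stable under the Weyl group of $\mathfrak{h}$ (or at least under your chosen $w$). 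What you actually verified is only that, for each pair $p\neq q$, the reflection in the root $e'_p-e'_q$ interchanges $e'_p$ and $e'_q$ (equal norms); it says nothing about where that reflection sends the remaining $e'_m$, so ``$W_{\mathfrak{h}}$ acts transitively on $\{e'_1,\ldots,e'_{n+1}\}$'' is not established and $w(e'_{l_0})$ need not be any $e'_j$. The paper avoids transport altogether and argues each $i$ directly: since $\mathfrak{t}\not\subset\mathfrak{h}$ there is $l$ with $e_i-e_l\notin\mathfrak{h}$; then $\mathfrak{h}_{\pm(e'_i-e'_l)}$ cannot be a root plane of $\mathfrak{g}$ (else Lemma \ref{lemma-for-case-A-n}(2) would force $e_i-e_l\in\mathfrak{h}$), so $\dim\widehat{\mathfrak{g}}_{\pm(e'_i-e'_l)}>2$ and there is a second presentation $e'_i-e'_l=e'_k-e'_j$; Lemma \ref{another-lemma-for-case-A-n}(3) then makes $e'_i-e'_j=(e'_i-e'_l)+(e'_i-e'_k)$ a long root, and your key observation finishes.

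The second gap is in uniqueness, and you flag it yourself. Your reduction shows only that two partners $j_1\neq j_2$ would create an $\mathfrak{a}_2$ inside the long-root subsystem of $\mathfrak{h}$, which is impossible for type $\mathfrak{c}_m$ but perfectly possible for $\mathfrak{b}_m$ and $\mathfrak{f}_4$; excluding those types is exactly the part you leave as ``the main technical obstacle,'' and the sketch you offer (an obstruction for the \emph{standard} $\mathfrak{so}(2m+1)\hookrightarrow\mathfrak{su}(2m+1)$) does not address an arbitrary embedding, about which nothing is yet known beyond the constraints on the $e'_i$. In effect you would have to prove a piece of the final classification ($\mathfrak{h}\cong\mathfrak{c}_k$) before obtaining uniqueness. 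The paper's uniqueness is instead an elementary orthogonality check requiring no knowledge of the type of $\mathfrak{h}$: with $j,k,l$ as produced above, $e'_i+e'_j-e'_k-e'_l=0$, so $e_i+e_j-e_k-e_l\in\mathfrak{m}$, and for every $p\neq j$ one has $\langle e_i-e_p,\,e_i+e_j-e_k-e_l\rangle\neq 0$, whence $e_i-e_p\notin\mathfrak{h}$. Your final parity argument (a fixed-point-free involution forces $n+1$ even) is fine once existence and uniqueness are secured, but both halves of your proof need repair along the lines above.
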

\begin{proof}
Since
$\mathfrak{t}$ is not contained in $\mathfrak{h}$, for any $e_i$,
there is another $e_l$ such that $e_i-e_l\notin\mathfrak{h}$. Then the root
$e'_i-e'_l$ of $\mathfrak{h}$ is not a root of $\mathfrak{g}$. By
Lemma \ref{lemma-for-case-A-n}, $\mathfrak{h}_{\pm(e'_i-e'_l)}$ is not
a root plane of $\mathfrak{g}$ and then
$\dim\widehat{\mathfrak{g}}_{\pm(e'_i-e'_l)}>2$. So
we have $e'_i-e'_l=e'_k-e'_j$ with $k\neq i$ and $l\neq j$. By an earlier
observation, and Lemma \ref{another-lemma-for-case-A-n}(3),
$e'_i-e'_j$ is a long root of $\mathfrak{h}$. There do not exist $p$ and $q$
such that $p\neq i$, $q\neq j$ and $e'_i-e'_j=e'_p-e'_q$.
So $\mathfrak{h}_{\pm(e'_i-e'_j)} =\widehat{\mathfrak{g}}_{\pm(e'_i-e'_j)}$
is a root plane of $\mathfrak{g}$. By Lemma \ref{lemma-for-case-A-n}(2),
$e'_i-e'_j=e_i-e_j\in\mathfrak{t}\cap\mathfrak{h}$. There does not exist
another index $p$ such that
$p\neq j$ and $e_i-e_p\in\mathfrak{h}$, because $e_i-e_p$ is not orthogonal
to $e_i+e_j-e_l-e_k\in\mathfrak{m}$. Obviously the map from $i$ to $j\neq i$
maps $j$ back to $i$. It follows immediately that $n$ must be odd.
\end{proof}
\smallskip

After a suitable permutation of the $e_i$ we can assume $\mathfrak{h}$
contains $e_1-e_{k+1}$, $e_2-e_{k+2}$, $\ldots$, $e_{k}-e_{n+1}$, where 
$k=(n+1)/2$,
and at the same time $\mathfrak{m}$ contains
$e_1+e_{k+1}-e_2-e_{k+2}$, $e_2+e_{k+2}-e_3-e_{k+3}$,
$\ldots$, $e_{k-1}+e_{2k-1}-e_k-e_n$. Those are bases for the subspaces
$\mathfrak{h}$ and $\mathfrak{m}$. The root system of $\mathfrak{h}$ is
$$
\{\pm(e_i-e_{i+k})\mid 1\leqq i\leqq k\}\cup
	\{\pm(e'_i-e'_j)\mid 1\leqq i<j\leqq k\}\cup
	\{\pm(e'_i-e'_{j+k})\mid  1\leqq i<j\leqq k\}.
$$
Thus $\mathfrak{h}$ is isomorphic to $\mathfrak{c}_k=\mathfrak{sp}(k)$.
For the root planes, we have
\begin{eqnarray*}
\mathfrak{h}_{\pm(e_i-e_{i+k})}&=&
\widehat{\mathfrak{g}}_{\pm(e_i-e_{i+k})}=\mathfrak{g}_{\pm(e_i-e_{i+k})},\\
\widehat{\mathfrak{g}}_{\pm(e'_i-e'_j)}&=&\mathfrak{g}_{\pm(e_i-e_j)}+
 \mathfrak{g}_{\pm(e_{i+k}-e_{j+k})}\text{ for } 1\leqq i<j\leqq k\mbox{ and}\\
\widehat{\mathfrak{g}}_{\pm(e'_i-e'_{j+k})}&=&
\mathfrak{g}_{\pm(e_i-e_{j+k})}+\mathfrak{g}_{\pm(e_j-e_{i+k})},\text{ for }
   1\leqq i<j\leqq k.
\end{eqnarray*}
\smallskip

We will see $[\mathfrak{m},\mathfrak{m}]\subset\mathfrak{h}$, i.e. $G/H$ is
Riemannian symmetric.
If $\alpha'$ is a root of $\mathfrak{h}$ such that
$\widehat{\mathfrak{g}}_{\pm\alpha'}\cap\mathfrak{m}\neq 0$,
and $u\in\mathfrak{t}\cap\mathfrak{m}$ then
$\ad(u):\widehat{\mathfrak{g}}_{\pm\alpha'}\rightarrow
\widehat{\mathfrak{g}}_{\pm\alpha'}$ is the same
$\left ( \begin{smallmatrix} 0 & a\\ -a & 0 \end{smallmatrix} \right )$
on the root planes $\mathfrak{g}_{\pm \alpha}$ where $\alpha$ restricts
to $\alpha'$.  Thus $\ad(u):\widehat{\mathfrak{g}}_{\pm\alpha'}\rightarrow
\widehat{\mathfrak{g}}_{\pm\alpha'}$ is a multiple of an isometry.
For generic $u$, $ad(u)$ maps $\widehat{\mathfrak{g}}_{\pm\alpha'}\cap\mathfrak{h}$
onto $\widehat{\mathfrak{g}}_{\pm\alpha'}\cap\mathfrak{m}$, and thus
$\widehat{\mathfrak{g}}_{\pm\alpha'}\cap\mathfrak{m}$
onto $\widehat{\mathfrak{g}}_{\pm\alpha'}\cap\mathfrak{h}$, because it maps
orthocomplement to orthocomplement. There are no root planes of $\mathfrak{g}$
contained in $\mathfrak{m}$, so the above argument proves
\begin{equation}\label{0010}
[\mathfrak{m}\cap\mathfrak{t},\mathfrak{m}]=
[\mathfrak{m}\cap\mathfrak{t},\mathfrak{m}\cap\mathfrak{t}^\perp]\subset\mathfrak{h}.
\end{equation}
In particular $[u,\mathfrak{h}]=\mathfrak{m}\cap\mathfrak{t}^\perp$ and
$[u,\mathfrak{m}]\subset\mathfrak{h}$ for generic
$u\in\mathfrak{t}\cap\mathfrak{m}$.
If $w_1,w_2\in\mathfrak{m}\cap\mathfrak{t}^\perp$ and
$u\in\mathfrak{t}\cap\mathfrak{m}$ is generic, now
$[u,w_1]$, $[u,w_2]\subset \mathfrak{h}$, and
$$
[u,[w_1,w_2]]=[[u,w_1],w_2]+[w_1,[u,w_2]]\subset \mathfrak{m},
$$
so
$$
\langle[w_1,w_2],\mathfrak{m}\cap\mathfrak{t}^\perp\rangle=
\langle[w_1,w_2],[u,\mathfrak{h}]\rangle=
\langle[u,[w_1,w_2]],\mathfrak{h}\rangle=0.
$$
Also we have
$$
\langle[w_1,w_2],u\rangle=\langle w_1,[w_2,u]\rangle=0,
$$
so $\langle[w_1,w_2],\mathfrak{t}\cap\mathfrak{m}\rangle=0$.
Thus
\begin{equation}\label{0011}
[\mathfrak{m}\cap\mathfrak{t}^\perp,\mathfrak{m}\cap\mathfrak{t}^\perp]
	\subset\mathfrak{h}.
\end{equation}
By (\ref{0010}) and (\ref{0011}),
$[\mathfrak{m},\mathfrak{m}]\subset\mathfrak{h}$, in other words
$G/H$ is a symmetric homogeneous space. It is locally Riemannian symmetric as well.
\smallskip

To summarize, we have proved the following proposition.
\begin{proposition}\label{prop-5-3}
Let $G$ be a compact connect simple Lie group of type $\mathfrak{a}_n$ and
$H$ a closed subgroup with $0<\dim H<\dim G$. If there
is a nonzero vector $v\in\mathfrak{g}=\mathrm{Lie}(G)$ that defines a
CK vector field on the Riemannian normal homogeneous space $M=G/H$, then
$M$ is a local Riemannian symmetric space with universal Riemannian covering
space $SU(2k)/Sp(k)$.
\end{proposition}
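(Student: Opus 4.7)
The overall strategy is to invoke Proposition \ref{prop-3-2} to reduce, after Weyl group action and rescaling, to the two candidate vectors $v_1 = ne_1 - e_2 - \cdots - e_{n+1}$ and (when $n = 2k-1$ is odd) $v_2 = (e_1 + \cdots + e_k) - (e_{k+1} + \cdots + e_{n+1})$. The cases $n = 1, 2$ are handled either trivially or by Proposition \ref{easy-no}, so we may assume $n > 2$. The rest of the proof splits according to which candidate $v$ is, with the aim of ruling out $v_2$ altogether and showing $v_1$ forces $M$ to be locally isometric to $SU(2k)/Sp(k)$.

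For $v = v_2$, I would first note that the constancy of $\|\mathrm{pr}_{\mathfrak{h}}(\rho(v_2))\|$ over the Weyl group, combined with elementary root-system bookkeeping, forces either $\mathfrak{t} \cap \mathfrak{h}$ or $\mathfrak{t} \cap \mathfrak{m}$ to equal the line $\mathbb{R}(ne_1 - e_2 - \cdots - e_{n+1})$. The first alternative is eliminated by exhibiting an element of $\mathrm{Ad}(G)v_2$ whose inner product with that line vector differs from the corresponding inner product for $v_2$, contradicting the constancy of $\mathrm{pr}_{\mathfrak{h}}$. In the second alternative, applying Lemma \ref{lemma-3-1} to orthogonal pairs $e_i - e_j$ with $1 < i < j$ forces all such root planes into $\mathfrak{h}$, identifying $\mathfrak{h}$ as the standard $\mathfrak{su}(n)$ so that $M \cong S^{2n+1} = SU(n+1)/SU(n)$. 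The normal metric on this sphere is not symmetric: the isotropy representation splits $\mathfrak{m} = \mathfrak{m}_0 + \mathfrak{m}_1$ with $\dim \mathfrak{m}_0 = 1$, and the CK condition would force $\mathrm{Ad}(G)v_2$ into an affine hyperplane parallel to $\mathfrak{h} + \mathfrak{m}_1$, contradicting the simplicity of $\mathfrak{g}$. So $v_2$ is impossible.

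For $v = v_1$, a finer root-theoretic analysis is needed. Applying Lemma \ref{lemma-3-1} to orthogonal pairs $e_i - e_j$, $e_k - e_l$ with distinct indices, I would first show that $\mathfrak{g}_{\pm(e_i - e_j)} \subset \mathfrak{h}$ (resp. $\subset \mathfrak{m}$) iff $e_i - e_j \in \mathfrak{t} \cap \mathfrak{h}$ (resp. $\mathfrak{t} \cap \mathfrak{m}$), so that the projected vectors $e'_i = \mathrm{pr}_{\mathfrak{h}}(e_i)$ generate the full root system of $\mathfrak{h}$ as differences $e'_i - e'_j$, forcing $\mathfrak{h}$ simple. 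Next I would rule out coincidences $e'_i = e'_j$ by restricting, via Lemma \ref{trivial-lemma}, to a standard $\mathfrak{su}(k+l)$ block where they occur and comparing norms of projected Weyl conjugates of the restricted vector to reach a contradiction. A careful analysis of the remaining degeneracies $e'_i - e'_j = e'_k - e'_l$ (distinct indices) isolates two possible local root patterns, a $\mathfrak{g}_2$-type and an $\mathfrak{sp}$-type, and the $\mathfrak{g}_2$ one is ruled out by tracking root lengths against the single/non-single dichotomy on the $e'_i$. What remains is $\mathfrak{h} \cong \mathfrak{sp}(k)$ with $n = 2k-1$, arising from a perfect matching $i \leftrightarrow j$ with $e_i - e_j \in \mathfrak{h}$. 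Finally, to see $M$ is locally symmetric, I would argue that $\ad(u)$ acts as a scalar multiple of an isometry on each compound root plane $\widehat{\mathfrak{g}}_{\pm \alpha'}$ for generic $u \in \mathfrak{t} \cap \mathfrak{m}$, hence swaps its $\mathfrak{h}$ and $\mathfrak{m}$ parts; combined with the Jacobi identity and bi-invariance this yields $[\mathfrak{m}, \mathfrak{m}] \subset \mathfrak{h}$, identifying the universal cover with $SU(2k)/Sp(k)$.

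The main obstacle I expect is the combinatorial analysis of the projected vectors in the $v_1$ case: excluding coincident projections, ruling out the $\mathfrak{g}_2$ pattern, and extracting the matching $i \leftrightarrow j$ require delicately overlapping case distinctions on which roots of $\mathfrak{g}$ can lie in $\mathfrak{h}$, in $\mathfrak{m}$, or in neither, and it is here that one must most carefully exploit both the Weyl group symmetry and the reduction Lemma \ref{trivial-lemma}.
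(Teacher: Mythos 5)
Your proposal is correct in outline and follows essentially the same route as the paper: reduction via Proposition \ref{prop-3-2}, elimination of $v=(e_1+\cdots+e_k)-(e_{k+1}+\cdots+e_{n+1})$ by identifying $\mathfrak{h}$ with the standard $\mathfrak{su}(n)$ and then deriving the hyperplane-parallel-to-$\mathfrak{h}+\mathfrak{m}_1$ contradiction with simplicity, and for $v=ne_1-e_2-\cdots-e_{n+1}$ the same chain of steps (the $e'_i=\mathrm{pr}_{\mathfrak{h}}(e_i)$ analysis, exclusion of coincident projections by reducing to an $\mathfrak{su}(k+l)$ block via Lemma \ref{trivial-lemma}, ruling out the $\mathfrak{g}_2$ pattern, the matching $i\leftrightarrow j$ giving $\mathfrak{h}\cong\mathfrak{sp}(k)$ with $n=2k-1$, and the $\ad(u)$ scalar-multiple-of-isometry argument yielding $[\mathfrak{m},\mathfrak{m}]\subset\mathfrak{h}$). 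The one minor deviation is that in the sphere case the paper forces the root planes $\mathfrak{g}_{\pm(e_i-e_j)}$, $1<i<j$, into $\mathfrak{h}$ by comparing norms of explicit elements of $\mathrm{Ad}(G)v$ rather than by Lemma \ref{lemma-3-1}, which concerns only Cartan-level projections; this does not affect the structure of your argument.
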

\smallskip

\section{Proof of Theorem \ref{main} for $\mathfrak{g}=\mathfrak{b}_n$}
\setcounter{equation}{0}
In this section $\mathfrak{g}=\mathfrak{b}_n=\mathfrak{so}(2n+1)$ with $n>1$,
and $0 \ne v\in\mathfrak{t}$ defines a CK vector field on the Riemannian normal
homogeneous space $M=G/H$.  Proposition \ref{prop-3-2} says that, up to
the action of the Weyl group, either $v$ is a multiple of $e_1$ or a
multiple of $e_1 + \dots + e_n$\,.  We must see whether those
vectors $v$ define CK vector fields on $M$.
\smallskip

\subsection{The case $v=e_1+\cdots+e_n$}\label{subsection-b-n-1}
Following the proof of Proposition \ref{prop-3-2} we may suitably permute
the $e_i$ and assume
\begin{eqnarray*}
\mathfrak{h}=\mathbb{R}e_1+\mathbb{R}e_{n-h+2}+\cdots+\mathbb{R}e_n,
	\mbox{ and }
\mathfrak{m}=\mathbb{R}e_{2}+\cdots+\mathbb{R}e_{n-h+1}.
\end{eqnarray*}
Let $G'$ be the connected Lie subgroup of $G$ whose Lie algebra
$\mathfrak{g}'$ is the centralizer of
$\mathbb{R}e_{n-h+2}\cdots+\mathbb{R}e_n$ in $\mathfrak{g}$.
Let $H'$ be the connected component of $G'\cap H$; its Lie algebra
$\mathfrak{h}'=\mathfrak{h}\cap\mathfrak{g}'$.  We have a direct sum
decomposition $\mathfrak{g}'=\mathfrak{g}''\oplus\mathfrak{c}$
in which $\mathfrak{c}$ is the center and
$\mathfrak{g}''=\mathfrak{so}(2n-2h+3)$ corresponding to
the $e_i$ with $1\leqq i\leqq n-h+1$. Here
$\mathfrak{c}=\mathbb{R}e_{n-h+2}+\cdots+\mathbb{R}e_n$.
We also have a direct sum decomposition
$\mathfrak{h}'=\mathfrak{h}''\oplus\mathfrak{c}$, in which either
$\mathfrak{h}'' = \mathbb{R}e_1$ or $\mathfrak{h}''$ is of type
$\mathfrak{a}_1$ with Cartan subalgebra
$\mathfrak{t}\cap\mathfrak{h}''=\mathbb{R}e_1$\,.
Let $G''$ be the closed subgroup in the universal cover of $G'$ with Lie
algebra $\mathfrak{g}''$, and $H''$ the closed connected subgroup
of $G''$ with Lie algebra $\mathfrak{h}''$. The restriction of the bi-invariant
inner product of $\mathfrak{g}$ to $\mathfrak{g}'$ and $\mathfrak{g}''$ defines
locally isometric Riemannian normal homogeneous metrics on $G'/H'$ and
$G''/H''$ respectively. Thus we have orthogonal decompositions
\begin{eqnarray}
\mathfrak{g}'=\mathfrak{h}'+\mathfrak{m}' \mbox{ and }
\mathfrak{g}''=\mathfrak{h}''+\mathfrak{m}''\label{0030}.
\end{eqnarray}
Since $\mathfrak{g}'$ is the centralizer
of a subalgebra of $\mathfrak{h}$, the first decomposition in
(\ref{0030}) coincides with
$\mathfrak{g}'=(\mathfrak{g}'\cap\mathfrak{h})+(\mathfrak{g}'\cap\mathfrak{m})$.
Since $\mathfrak{c}\subset\mathfrak{h}'$ is orthogonal to $\mathfrak{m}'$, i.e.
$\mathfrak{m}'\subset \mathfrak{g}''$,
the second decomposition in (\ref{0030}) coincides with
$$
\mathfrak{g}''=(\mathfrak{g}''\cap\mathfrak{h}')+
	(\mathfrak{g}''\cap\mathfrak{m}') =(\mathfrak{g}''\cap\mathfrak{h})
	+(\mathfrak{g}''\cap\mathfrak{m}).
$$
The vector $v$ can be decomposed as the sum of
$v''=e_1+\cdots+e_{n-h+1}\in\mathfrak{g}''$
and $v_\mathfrak{c}=e_{n-h+2}+\cdots+e_n\in\mathfrak{c}$, which is orthogonal to
$\mathfrak{g}''$. So by Lemma
\ref{trivial-lemma}, $v''$ defines a CK vector field on the
Riemannian normal homogeneous space $G''/H''$.
\smallskip

If $\mathfrak{h}'' = \mathbb{R}e_1$ we can find
$$
v''_1=\mathrm{diag}\left ( 0,\left(
                  \begin{smallmatrix}
                    0 & 0 & 1 & 0 \\
                    0 & 0 & 0 & 1 \\
                    -1 & 0 & 0 & 0 \\
                    0 & -1 & 0 & 0 \\
                  \end{smallmatrix}
                \right),\left(
                          \begin{array}{cc}
                            0 & 1 \\
                            -1 & 0 \\
                          \end{array}
                        \right),\ldots,
                        \left(
                          \begin{array}{cc}
                            0 & 1 \\
                            -1 & 0 \\
                          \end{array}
                        \right)\right )\in \mathfrak{m}''\cap \Ad(G)v''\,.
$$
Then $||\mathrm{pr}_{\mathfrak{h}''}(v'')||>
||\mathrm{pr}_{\mathfrak{h}''}(v''_1)||=0$,
which contradicts the CK property of $v$.
If $\mathfrak{h}''\cong \mathfrak{a}_1$, we use the Weyl group of $G''$
to change it to the standard $\mathfrak{so}(3)\subset\mathfrak{so}(2n-2h+3)$
corresponding the $3\times 3$-block at the upper left corner.
The argument used for the case $\mathfrak{h}''=\mathbb{R}e_1$ also
leads to a contradiction in this case.
\smallskip

\subsection{The case $v=e_1$}
\smallskip

We now consider the case $v=e_1$ in Proposition \ref{prop-3-2}.
Denote $e'_i=\mathrm{pr}_{\mathfrak{h}}(e_i)$ for $1\leqq i\leqq n$; they
all have the same length.  We have $E_{1,2i}-E_{2i,1}$ and
$E_{1,2i+1}-E_{2i+1,1}$ in $\Ad(G)v \cap \mathfrak{g}_{\pm e_i}$\,.
Thus, for $1\leqq i\leqq n$, $e'_i$ is a root of $\mathfrak{h}$, and
$\dim \widehat{\mathfrak{g}}_{\pm e'_i} >2$.
$\Ad(G)v$ also contains $E_{2i,2j}-E_{2j,2i}
\in\mathfrak{g}_{\pm(e_i+e_j)}+\mathfrak{g}_{\pm(e_i-e_j)}$ for
$1\leqq i<j\leqq n$, so either
$e'_i+e'_j$ or $e'_i-e'_j$ is a root of $\mathfrak{h}$ whenever $i\neq j$.
Any root of $\mathfrak{h}$ has form $\pm e'_i$ or $\pm e'_i\pm e'_j$, and
it follows that $\mathfrak{h}$ is a compact simple Lie algebra.
From the standard description of the roots of $\mathfrak{b}_n$,
$\mathfrak{c}_n$, $\mathfrak{f}_4$ and $\mathfrak{g}_2$\,,
the $\pm e'_i$ cannot be long roots, because all roots of $\mathfrak{h}$ are of the form
$\pm e'_i\pm e'_j$ for $i\neq j$.
\smallskip

If $i \neq j$ then $e'_i \neq \pm e'_j$ because that would give a root
$e'_i\pm e'_j = 0 \text{ or } 2e'_j$.  Thus $\{\pm e'_i\}$ is a set of
$2n$ distinct roots of $\mathfrak{h}$.
Since $\dim \widehat{\mathfrak{g}}_{\pm e'_i}>2$, and
$\mathrm{pr}_{\mathfrak{h}}(\pm e_j) \neq e'_i$ for $i \neq j$, there must
be a root $\alpha$ of the form $\pm e_j\pm e_k$, such that
$\mathrm{pr}_{\mathfrak{h}}(\alpha)=e'_i$, i.e. $e'_i=\pm e'_k\pm e'_l$.
\smallskip

Summarizing the above argument, we have the following lemma.

\begin{lemma} Suppose that $\mathfrak{g}=\mathfrak{b}_n$ with $n>1$ and
that $v=e_1\in\mathfrak{t}$ defines a CK vector field on the Riemannian
normal homogeneous space $M=G/H$. Denote
$e'_i=\mathrm{pr}_{\mathfrak{h}}(e_i)$, then we have the following.
\begin{description}
\item{\rm (1)} $\{\pm e'_1,\ldots,\pm e'_n\}$ consists of $2n$ different roots of
$\mathfrak{h}$.  If $1\leqq i<j\leqq n$ then either
$e'_i+e'_j$ or $e'_i-e'_j$ is a root of $\mathfrak{h}$.
\item{\rm (2)} The Lie algebra $\mathfrak{h}$ is compact simple. If
it is isomorphic to $\mathfrak{b}_n$, $\mathfrak{c}_n$, $\mathfrak{f}_4$ or
$\mathfrak{g}_2$ then the $\pm e'_i$ are short roots.
\item{\rm (3)} Any $e'_i$ can be expressed as $e'_i=\pm e'_j\pm e'_k$ in
which $i$, $j$ and $k$ are different from each other.
\end{description}
\end{lemma}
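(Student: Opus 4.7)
The plan is to extract all three claims from the orbit computations in the paragraph preceding the lemma: namely that $\mathrm{Ad}(G)v$ contains $E_{1,2i}-E_{2i,1}\in\mathfrak{g}_{\pm e_i}$ and $E_{2i,2j}-E_{2j,2i}\in\mathfrak{g}_{\pm(e_i-e_j)}+\mathfrak{g}_{\pm(e_i+e_j)}$, combined with the orthogonal decompositions (\ref{decomp-3}) and (\ref{decomp-4}) and the CK condition $\|\mathrm{pr}_{\mathfrak{h}}(\mathrm{Ad}(g)v)\|=\|e'_1\|$.

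First I would establish that each $e'_i\ne 0$ and is a root of $\mathfrak{h}$. Since the Weyl group of $\mathfrak{b}_n$ acts transitively on $\{\pm e_1,\ldots,\pm e_n\}$ and is realized by inner automorphisms, the CK condition forces $\|e'_i\|=\|e'_1\|$; as $\mathfrak{t}\cap\mathfrak{h}\ne 0$ (its dimension equals the positive rank of $\mathfrak{h}$), this common norm is nonzero. The orbit element $E_{1,2i}-E_{2i,1}$ lies in the 2-dimensional root plane $\mathfrak{g}_{\pm e_i}$ and has $\mathfrak{h}$-projection of norm $\|e'_1\|>0$ living in $\widehat{\mathfrak{h}}_{\pm e'_i}$; hence $e'_i\in\Delta'$. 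To show the $2n$ vectors $\pm e'_i$ are distinct, one applies the same argument to $E_{2i,2j}-E_{2j,2i}$: its $\mathfrak{h}$-projection has norm $\|e'_1\|>0$, so at least one of $e'_i\pm e'_j$ is in $\Delta'$. If $e'_i=\epsilon e'_j$ with $\epsilon=\pm 1$ and $i\ne j$, then neither $e'_i-\epsilon e'_j=0$ nor $e'_i+\epsilon e'_j=2e'_i$ can be a root ($2e'_i$ is excluded because $e'_i$ is already a root in a compact simple root system), a contradiction. This proves (1).

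For (2), every root of $\mathfrak{h}$ is the $\mathfrak{h}$-projection of a root of $\mathfrak{g}=\mathfrak{b}_n$, so $\Delta'\subset\{\pm e'_i,\,\pm e'_i\pm e'_j\}$. The $e'_i$ span $\mathfrak{t}\cap\mathfrak{h}$ (projections of a basis of $\mathfrak{t}$) and all are roots, so $\mathfrak{h}$ is semisimple. A nontrivial orthogonal decomposition $\mathfrak{h}=\mathfrak{h}_1\oplus\mathfrak{h}_2$ would partition $\{e'_i\}$ between the two Cartan summands; then for $e'_i\in\mathfrak{h}_1$ and $e'_j\in\mathfrak{h}_2$ the vectors $e'_i\pm e'_j$ have nonzero components in both summands and hence cannot be roots, contradicting (1). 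Thus $\mathfrak{h}$ is simple. Suppose next that $\mathfrak{h}$ is of type $\mathfrak{b}$, $\mathfrak{c}$, $\mathfrak{f}_4$, or $\mathfrak{g}_2$ and, by way of contradiction, that $\pm e'_i$ are long. Since short roots must exist and cannot be $\pm e'_i$, some $\alpha=\epsilon_1 e'_i+\epsilon_2 e'_j$ is short with $|\alpha|^2/|\beta|^2$ equal to $1/2$ or $1/3$, where $\beta=e'_i$. Expanding $|\alpha|^2=2|\beta|^2+2\epsilon_1\epsilon_2\langle e'_i,e'_j\rangle$, solving for $\langle e'_i,e'_j\rangle$, and substituting into $\langle\alpha,\beta\rangle=\epsilon_1|\beta|^2+\epsilon_2\langle e'_i,e'_j\rangle$ yields the Cartan integer $2\langle\alpha,\beta\rangle/\langle\beta,\beta\rangle=\epsilon_1/2$ or $\epsilon_1/3$, which is not an integer, the desired contradiction.

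For (3), the key step is to show $\dim\widehat{\mathfrak{g}}_{\pm e'_i}>2$. If equality held then $\widehat{\mathfrak{g}}_{\pm e'_i}=\mathfrak{g}_{\pm e_i}$, and the 2-dimensional root plane $\widehat{\mathfrak{h}}_{\pm e'_i}$ would have to fill it, so $\mathfrak{g}_{\pm e_i}\subset\mathfrak{h}$. Then $E_{1,2i}-E_{2i,1}$ would have zero $\mathfrak{m}$-projection, whereas $\mathrm{pr}_{\mathfrak{m}}(v)=e_1-e'_1\ne 0$ because $e_1\notin\mathfrak{h}$ (else $v$ would define the zero field), contradicting the CK condition. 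So some root $\alpha\ne\pm e_i$ of $\mathfrak{g}$ projects to $\pm e'_i$. It cannot be $\pm e_k$ with $k\ne i$ (otherwise $\pm e'_k=\pm e'_i$, contradicting (1)); nor can it equal $\pm e_i\pm e_k$ (that would force $e'_k=0$ or $\pm 2e'_i$, both impossible by (1) and the equal-norm condition). Hence $\alpha=\pm e_j\pm e_k$ with $i,j,k$ pairwise distinct, so $e'_i=\pm e'_j\pm e'_k$. The main obstacle is the short-root claim in (2): one must select the Cartan integer with the long norm in the denominator to manufacture the half-integer or third-integer contradiction, as the other Cartan integer remains integral; the rest of the lemma assembles directly from the orbit computations above.
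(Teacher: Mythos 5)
Your proposal is correct and is essentially the paper's own proof: you use the same orbit elements $E_{1,2i}-E_{2i,1}\in\mathfrak{g}_{\pm e_i}$ and $E_{2i,2j}-E_{2j,2i}\in\mathfrak{g}_{\pm(e_i-e_j)}+\mathfrak{g}_{\pm(e_i+e_j)}$, together with the constancy and positivity of $\|\mathrm{pr}_{\mathfrak{h}}(\mathrm{Ad}(g)v)\|$ and $\|\mathrm{pr}_{\mathfrak{m}}(\mathrm{Ad}(g)v)\|$ and the decompositions (\ref{decomp-3})--(\ref{decomp-4}), to get (1), the inequality $\dim\widehat{\mathfrak{g}}_{\pm e'_i}>2$, and (3) exactly as in the text. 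The only (harmless) divergences are that you justify the short-root claim in (2) by the Cartan-integer computation $2\langle \alpha,e'_i\rangle/\langle e'_i,e'_i\rangle=\pm\tfrac12$ or $\pm\tfrac13$ where the paper instead appeals to the standard descriptions of the root systems of $\mathfrak{b}_h$, $\mathfrak{c}_h$, $\mathfrak{f}_4$, $\mathfrak{g}_2$, and that you spell out the semisimplicity/simplicity step the paper leaves implicit; both arguments are valid.
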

\smallskip

Because from the $\mathrm{Ad}(G)$-orbit of $v$, we can find an orthonormal basis
for $\mathfrak{t}$ as well as an orthonormal basis for $\mathfrak{g}$, by Lemma
\ref{lemma-3-3},
\begin{equation}\label{0040}
2n+1=\frac{\dim\mathfrak{g}}{\dim\mathfrak{t}}=\frac{\dim\mathfrak{h}}{\dim\mathfrak{t\cap\mathfrak{h}}},
\end{equation}
Denote $h=\dim\mathfrak{t}\cap\mathfrak{h}$. Then the right side of
(\ref{0040}) is $h+2$, $2h+1$, $2h+1$, $2h-1$, $13$, $19$,
$31$, $13$ or $7$, respectively, when $\mathfrak{h}$ is isomorphic to
$\mathfrak{a}_h$, $\mathfrak{b}_h$,
$\mathfrak{c}_h$, $\mathfrak{d}_h$, $\mathfrak{e}_6$,
$\mathfrak{e}_7$, $\mathfrak{e}_8$,
$\mathfrak{f}_4$ or $\mathfrak{g}_2$\,. Because $h<n$, $\mathfrak{h}$ can only
be $\mathfrak{e}_7$, $\mathfrak{e}_8$, $\mathfrak{f}_4$ or $\mathfrak{g}_2$.
\smallskip

First consider the cases where $\mathfrak{h}$ is isomorphic to
$\mathfrak{e}_7$, $\mathfrak{e}_8$ or $\mathfrak{f}_4$. If
$e'_i=\pm e'_j\pm e'_k$, then
$e'_j$ and $e'_k$ have an angle $\frac{\pi}{3}$ or $\frac{2\pi}{3}$, so
the corresponding vector $\pm e'_j\pm (-e'_k)$ is not a root of $\mathfrak{h}$.
Based on this observation, we have the following lemma.

\begin{lemma}\label{lemma-5-3}
Assume $\mathfrak{g}=\mathfrak{b}_n$,
$\mathfrak{h}=\mathfrak{e}_7$,
$\mathfrak{e}_8$ or $\mathfrak{f}_4$,
and $v=e_1\in\mathfrak{t}$
defines a CK vector field on the Riemannian
normal homogeneous space $G/H$. Then
\begin{equation}\label{0041}
\frac{||v||^2}{|| \mathrm{pr}_{\mathfrak{h}} (v)||^2}=
  \frac{\dim\mathfrak{t}}{ \dim(\mathfrak{t}\cap\mathfrak{h})}
  =\dim\widehat{\mathfrak{g}}_{\pm e'_i}-1 \geq 3,
\end{equation}
whenever $1\leq i\leq n$.
\end{lemma}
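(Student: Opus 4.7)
The plan is to prove the three assertions of the lemma in sequence: the first equality, the rightmost inequality, and the substantive middle equality.

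The first equality is immediate from Lemma~\ref{lemma-3-3}(1). The Weyl group of $\mathfrak{g}=\mathfrak{b}_n$ consists of signed permutations of the basis $\{e_1,\ldots,e_n\}$, so $W(v)=W(e_1)=\{\pm e_1,\ldots,\pm e_n\}$ contains the orthogonal basis $\{e_1,\ldots,e_n\}$ of $\mathfrak{t}$; the lemma then yields $||\mathrm{pr}_{\mathfrak{h}}(v)||^2/||v||^2=h/n$ at once.

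The rightmost inequality $\dim\widehat{\mathfrak{g}}_{\pm e'_i}-1\geq 3$ comes from part~(3) of the lemma immediately preceding Lemma~\ref{lemma-5-3}. There exist $j,k$ distinct from $i$ and signs with $e'_i=\pm e'_j\pm e'_k$, so the long root $\pm e_j\pm e_k$ of $\mathfrak{g}$ projects onto $\pm e'_i$ and contributes a $2$-dimensional root plane to $\widehat{\mathfrak{g}}_{\pm e'_i}$ alongside the plane $\mathfrak{g}_{\pm e_i}$. Hence $\dim\widehat{\mathfrak{g}}_{\pm e'_i}\geq 4$.

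The middle equality is the substantive step, and my plan is to combine two ingredients. First, by the $\mathrm{ad}(\mathfrak{t}\cap\mathfrak{h})$-equivariance of $\mathrm{pr}_{\mathfrak{h}}$, the image $\mathrm{pr}_{\mathfrak{h}}(\widehat{\mathfrak{g}}_{\pm e'_i})$ lies inside $\widehat{\mathfrak{g}}_{\pm e'_i}\cap\mathfrak{h}$, the $2$-dimensional root plane of $\mathfrak{h}$ for $\pm e'_i$, so $\mathrm{pr}_{\mathfrak{h}}$ restricts to an idempotent on $\widehat{\mathfrak{g}}_{\pm e'_i}$ of rank $2$ and trace $2$. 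Second, for each root plane $\mathfrak{g}_{\pm\alpha}\subset\widehat{\mathfrak{g}}_{\pm e'_i}$ I would differentiate the CK-constant function $t\mapsto ||\mathrm{pr}_{\mathfrak{h}}(\mathrm{Ad}(\exp tX)v)||^2$ twice at $t=0$ for some $X\in\mathfrak{g}_{\pm\alpha}$ with $\alpha(v)\neq 0$, replacing $v$ by a suitable Weyl conjugate $e_j$ in its $\mathrm{Ad}(G)$-orbit if necessary. Using the bi-invariance identity $[X,[X,v]]=-\alpha(v)||X||^2\alpha$, the vanishing of the second derivative yields the per-plane scalar $||\mathrm{pr}_{\mathfrak{h}}(X)||^2/||X||^2=||\mathrm{pr}_{\mathfrak{h}}(\alpha)||^2/||\alpha||^2=(h/n)/||\alpha||^2$. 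Summing these scalars over an orthonormal basis of $\widehat{\mathfrak{g}}_{\pm e'_i}$ adapted to the root-plane decomposition and equating to the trace $2$ produces the identity that identifies $\dim\widehat{\mathfrak{g}}_{\pm e'_i}-1$ with $n/h$. The main obstacle is executing this second-order CK computation cleanly and reconciling the contributions of the short root plane $\mathfrak{g}_{\pm e_i}$ and the long root planes projecting to $\pm e'_i$ so that the two root lengths of $\mathfrak{b}_n$ line up to give the stated equality rather than merely an inequality; downstream, the resulting constraint $n/h\geq 3$ is contradicted by the values $n/h\in\{9/7,15/8,3/2\}$ forced on us respectively by $\mathfrak{h}=\mathfrak{e}_7,\mathfrak{e}_8,\mathfrak{f}_4$ through equation~(\ref{0040}).
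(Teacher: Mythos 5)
Your first equality and the bound $\dim\widehat{\mathfrak{g}}_{\pm e'_i}\geq 4$ are correct and coincide with the paper's steps (Lemma \ref{lemma-3-3}(1), and part (3) of the lemma preceding this one). The gap is the middle equality, which you leave as a plan, and the plan as described cannot deliver it. If you actually carry out your second-order computation, taking an oriented orthonormal basis $u,w$ of $\mathfrak{g}_{\pm\alpha}$ (so that $[u,w]=\alpha$) and a Cartan element $v\in\mathfrak{t}\cap\mathrm{Ad}(G)e_1$ with $\langle v,\alpha\rangle\neq 0$, the vanishing of $\tfrac{d^2}{dt^2}\big|_{t=0}\,||\mathrm{pr}_{\mathfrak{h}}(\mathrm{Ad}(\exp tu)v)||^2$ gives $||\mathrm{pr}_{\mathfrak{h}}(w)||^2=\langle\mathrm{pr}_{\mathfrak{h}}(v),\alpha\rangle/\langle v,\alpha\rangle$. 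With $v=e_p$ and $\alpha=e_p\pm e_q$ projecting to $\pm e'_i$, and using $||e'_l||^2=h/n$ for all $l$ together with $||\mathrm{pr}_{\mathfrak{h}}(\alpha)||^2=||e'_i||^2=h/n$, this scalar is $h/n-h/(2n)=h/(2n)$, i.e.\ exactly your $(h/n)/||\alpha||^2$, while for the short plane $\mathfrak{g}_{\pm e_i}$ it is $h/n$. Summing over an adapted orthonormal basis of $\widehat{\mathfrak{g}}_{\pm e'_i}$ (one short plane and $k$ long planes, $\dim\widehat{\mathfrak{g}}_{\pm e'_i}=2k+2$) gives $2(h/n)+2k\cdot h/(2n)=(k+2)h/n$, and equating this to the trace $2$ yields $n/h=(k+2)/2=(\dim\widehat{\mathfrak{g}}_{\pm e'_i}+2)/4$, not $\dim\widehat{\mathfrak{g}}_{\pm e'_i}-1=2k+1$. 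So the ``reconciliation of the two root lengths'' you flag as the remaining obstacle is not a loose end: your route provably produces a different identity, and the bound it gives is only $n/h\geq 3/2$, which does not contradict the value $n/h=3/2$ forced by (\ref{0040}) when $\mathfrak{h}=\mathfrak{f}_4$; hence the inequality $\geq 3$, which is what kills the $\mathfrak{f}_4$ case downstream, is not recovered.

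The ingredient you are missing is the paper's passage to the conjugate root planes. For each long root $\alpha_j=\pm e_p\pm e_q$ with $\mathrm{pr}_{\mathfrak{h}}(\alpha_j)=\pm e'_i$, set $\bar{\alpha}_j=\pm e_p\mp e_q$; since $e'_p$, $e'_q$ and $e'_i$ are short roots of $\mathfrak{h}$ and $\mathfrak{h}\not\cong\mathfrak{g}_2$, the vector $\mathrm{pr}_{\mathfrak{h}}(\bar{\alpha}_j)$ (of squared length $3h/n$) is not a root of $\mathfrak{h}$, so $\mathfrak{g}_{\pm\bar{\alpha}_j}\subset\mathfrak{m}$. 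The enlarged space $\mathfrak{v}=\mathfrak{g}_{\pm e_i}+\sum_j(\mathfrak{g}_{\pm\alpha_j}+\mathfrak{g}_{\pm\bar{\alpha}_j})$ has dimension $4k+2$, satisfies $\mathfrak{v}=(\mathfrak{v}\cap\mathfrak{h})+(\mathfrak{v}\cap\mathfrak{m})$ with $\mathfrak{v}\cap\mathfrak{h}=\mathfrak{h}_{\pm e'_i}$ of dimension $2$, and admits an orthonormal basis of matrices $E_{ab}-E_{ba}$ lying in $\mathrm{Ad}(G)e_1$; these orbit elements are spread across the pairs $\mathfrak{g}_{\pm\alpha_j}+\mathfrak{g}_{\pm\bar{\alpha}_j}$, which is precisely why the $\bar{\alpha}_j$-planes must be adjoined and why no argument confined to $\widehat{\mathfrak{g}}_{\pm e'_i}$ alone produces the stated number. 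Averaging the constant $||\mathrm{pr}_{\mathfrak{h}}(\cdot)||^2$ over this basis, as in Lemma \ref{lemma-3-3}, gives $n/h=\dim\mathfrak{v}/\dim(\mathfrak{v}\cap\mathfrak{h})=2k+1=\dim\widehat{\mathfrak{g}}_{\pm e'_i}-1$, which is the assertion to be proved. Without this step, or an equivalent substitute, your proposal does not prove the lemma.
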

\begin{proof}
Denote $\dim\widehat{\mathfrak{g}}_{\pm e'_i} =2k+2$ and
$\widehat{\mathfrak{g}}_{\pm e'_i} =\sum_{j=1}^k\mathfrak{g}_{\pm\alpha_j}$.
Consider any root $\alpha_j$
of $\mathfrak{g}$ in the above equality, and assume it has the form
 $\alpha_j=\pm e_p\pm e_q$ with $p<q$. Denote $\bar{\alpha}=\pm e_p \mp e_q$.
Because $\mathfrak{h}$ is not isomorphic to $\mathfrak{g}_2$,
and all the $e'_p$, $e'_q$ and $\mathrm{pr}_{\mathfrak{h}}
({\alpha}_j)=\pm e'_p\pm e'_q$ are short roots of
$\mathfrak{h}$, now $\mathrm{pr}_{\mathfrak{h}}
(\bar{\alpha}_j)=\pm e'_p\mp e'_q$ is not a root of $\mathfrak{h}$,
i.e. $\mathfrak{g}_{\pm\bar{\alpha}_j}\subset\mathfrak{m}$.

Let $\mathfrak{v}=\mathfrak{g}_{\pm
e_i}+\sum_{j=1}^k(\mathfrak{g}_{\pm\alpha_j}+\mathfrak{g}_{\pm\bar{\alpha}_j})$,
then $\mathfrak{v}=(\mathfrak{v}\cap\mathfrak{h})+(\mathfrak{v}\cap
\mathfrak{m})$, and $\mathfrak{v}\cap\mathfrak{h}=\mathfrak{h}_{\pm e'_i}$
is real 2-dimensional. Because different roots $\alpha_j$ correspond to different pairs
$\{p,q\}$ for $\alpha_j=\pm e_p\pm e_q$, all roots $\bar{\alpha}_j$
of $\mathfrak{g}$ are also different with each other. So $\mathfrak{v}$
is a real $4k+2$-dimensional linear space. Inside $\mathrm{Ad}(G)v =
\Ad(G)e_1$ we have an orthonormal basis of $\mathfrak{v}$ consisting of
$E_{1,2i}-E_{2i,1}$ and $E_{1,2i+1}-E_{2i+1,1}$ in $\mathfrak{g}_{\pm e_i}$ and
$E_{2p,2q}-E_{2q,2p}$, $E_{2p,2q+1}-E_{2q+1,2q}$, $E_{2p+1,2q}-E_{2q,2p+1}$ and
$E_{2p+1,2q+1}-E_{2q+1,2p+1}$ in each $\mathfrak{g}_{\pm\alpha_j}+
\mathfrak{g}_{\pm\bar{\alpha}_j}$.  By Lemma \ref{subsection-b-n-1}, and
using some arguments as before, we have
$$
\frac{||v||^2}{||\mathrm{pr}_{\mathfrak{h}}||^2}
=\frac{\dim\mathfrak{t}}{\dim(\mathfrak{t}\cap\mathfrak{h})}
=\frac{\dim\mathfrak{v}}{\dim(\mathfrak{v}\cap\mathfrak{h})}=2k+1=
\widehat{\mathfrak{g}}_{\pm e'_i}-1.
$$
\hfill\end{proof}
\smallskip

If $\mathfrak{h}$ is $\mathfrak{e}_7$, $\mathfrak{e}_8$ or
$\mathfrak{f}_4$,
then, by (\ref{0040}), $n=\dim\mathfrak{t}$ is $9$, $15$ or $6$, respectively,
contradicting (\ref{0041}).
\smallskip

Finally we consider the case $\mathfrak{h}=\mathfrak{g}_2$ and show it is
possible.  By (\ref{0040}), we have $n=3$ for this case, with $\pm e_1$,
$\pm e_2$ and $\pm e_3$ corresponding to the three pairs of short roots.
Suitably choosing $e_i$ and applying sign changes $e_i \mapsto -e_i$ as
appropriate, we may assume
$\mathfrak{t}\cap\mathfrak{m}=\mathbb{R}(e_1+e_2+e_3)$, and
the root planes of $\mathfrak{h}$ are
\begin{eqnarray*}
\mathfrak{h}_{\pm(e_i-e_j)}&=&\mathfrak{g}_{\pm(e_i-e_j)},
  \mbox{ for } 1\leqq i<j\leqq 3,\mbox{ and}\\
\mathfrak{h}_{\pm\frac{1}{3}(e_i+e_j-2e_k)}&\subset&
  \widehat{\mathfrak{g}}_{\pm\frac{1}{3}(e_i+e_j-2e_k)}=
  \mathfrak{g}_{\pm(e_i+e_j)} +\mathfrak{g}_{\pm e_k}
  \mbox{ for } \{i,j,k\}=\{1,2,3\}.
\end{eqnarray*}
The subalgebra $\mathfrak{h}$ is uniquely determined up to the action of
$\mathrm{Ad}(G)$.  Since the isotropy subgroup $\mathrm{G}_2$ is transitive
on directions in the tangent space of $S^7=\mathrm{Spin}(7)/\mathrm{G}_2$,
the $\mathrm{Spin}(7)$--invariant Riemannian metric on that space is the
standard constant positive curvature metric.
Now the vector $v=e_1\in\mathfrak{t}$ does defines a CK vector field on
$S^7=\mathrm{Spin(7)/\mathrm{G}_2}$.
\smallskip

It is well known that $v'=e_1+\cdots+e_4$ defines a CK vector field on
the symmetric space $S^7=\mathrm{Spin(8)}/\mathrm{Spin(7)}$ for the
standard imbedding $\mathfrak{so}(7)\hookrightarrow\mathfrak{so}(8)$.
But if we change the setup by a suitable outer automorphism, using
triality, $v'$ can be changed to $v=e_1$\,, which belongs to the
Cartan subalgebra $\mathfrak{t}$ of the standard
$\mathfrak{so}(7)\subset\mathfrak{so}(8)$. Inside
$\mathfrak{so}(8)$ the intersection of the standard $\mathfrak{so}(7)$ and
the isotropic one is just the $\mathfrak{g}_2$. So $v=e_1$ also defines a
CK vector field on the Riemannian symmetric $S^7=\mathrm{Spin(7)/\mathrm{G}_2}$.
\smallskip

In summary, we have the following proposition.
\begin{proposition}\label{prop-S7}
Let $G$ be a compact connected simple Lie group with
$\mathfrak{g}=\mathfrak{b}_n$ where $n>1$. Let $H$ be closed subgroup with
$0<\dim H<\dim G$, such that $G/H$ is a Riemannian normal homogeneous space.
Assume there is a nonzero vector $v\in\mathfrak{g}$ that
defines a CK vector field on $M=G/H$. Then $M=G/H$ is a locally symmetric
Riemannian manifold whose universal Riemannian covering is
$S^7=\mathrm{Spin}(7)/\mathrm{G}_2$.
\end{proposition}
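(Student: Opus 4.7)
The proof will split along the two possibilities for $v$ delivered by Proposition \ref{prop-3-2}: up to Weyl group action and rescaling, either $v=e_1+\cdots+e_n$ or $v=e_1$. My plan is to rule out the first shape entirely by a reduction to a smaller $\mathfrak{so}$-factor, and then to analyze the second shape by projecting the coordinate vectors to $\mathfrak{h}$ and using dimension counts to pin down the isomorphism type of $\mathfrak{h}$, ultimately forcing $\mathfrak{h}\cong\mathfrak{g}_2$ and $n=3$.

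For $v=e_1+\cdots+e_n$, the argument inside the proof of Proposition \ref{prop-3-2} already places $\mathfrak{t}\cap\mathfrak{h}$ and $\mathfrak{t}\cap\mathfrak{m}$ on complementary coordinate subspaces, with $e_1\in\mathfrak{t}\cap\mathfrak{h}$. The next step is to take the centralizer $\mathfrak{g}'$ of the piece of $\mathfrak{t}\cap\mathfrak{h}$ other than $\mathbb{R}e_1$, split off its abelian center, and obtain a smaller normal homogeneous space $G''/H''$ with $\mathfrak{g}''\cong\mathfrak{so}(2n-2h+3)$ and $\mathfrak{h}''$ either $\mathbb{R}e_1$ or an $\mathfrak{a}_1$ with Cartan $\mathbb{R}e_1$. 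By Lemma \ref{trivial-lemma} the reduced vector $v''=e_1+\cdots+e_{n-h+1}$ must be CK on $G''/H''$; one then produces a contradiction by exhibiting a single explicit $\mathrm{Ad}(G'')$-conjugate of $v''$ lying in a distinguished $4\times 4$ antisymmetric block whose $\mathfrak{h}''$-projection vanishes, breaking constancy of $\|\mathrm{pr}_{\mathfrak{h}''}(\cdot)\|$.

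For $v=e_1$, I set $e'_i=\mathrm{pr}_{\mathfrak{h}}(e_i)$. Using the explicit generators of $\mathrm{Ad}(G)v$ sitting in each $\mathfrak{g}_{\pm e_i}$ and in each combined plane $\mathfrak{g}_{\pm(e_i-e_j)}+\mathfrak{g}_{\pm(e_i+e_j)}$, I would show that the $\pm e'_i$ form $2n$ distinct short roots of $\mathfrak{h}$ with $\dim\widehat{\mathfrak{g}}_{\pm e'_i}>2$, that at least one of $e'_i\pm e'_j$ is a root for each $i\neq j$, and that consequently $\mathfrak{h}$ is compact simple. Since $\mathrm{Ad}(G)v$ also contains an orthogonal basis of $\mathfrak{g}$, Lemma \ref{lemma-3-3}(2) yields the rigid identity $2n+1=\dim\mathfrak{h}/\dim(\mathfrak{t}\cap\mathfrak{h})$. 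Comparing with the Bourbaki list under the constraint $\dim(\mathfrak{t}\cap\mathfrak{h})<n$, only $\mathfrak{h}\in\{\mathfrak{e}_7,\mathfrak{e}_8,\mathfrak{f}_4,\mathfrak{g}_2\}$ remain.

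To discard $\mathfrak{e}_7,\mathfrak{e}_8,\mathfrak{f}_4$, I refine the count inside one $\widehat{\mathfrak{g}}_{\pm e'_i}$: since $\mathfrak{h}\not\cong\mathfrak{g}_2$, the partner root $\pm e_p\mp e_q$ of each constituent $\pm e_p\pm e_q$ of $\widehat{\mathfrak{g}}_{\pm e'_i}$ projects to a non-root, so its root plane lies in $\mathfrak{m}$; assembling $\mathfrak{g}_{\pm e_i}$ with all such paired planes produces a subspace $\mathfrak{v}$ meeting $\mathrm{Ad}(G)v$ in an orthonormal basis, and the constancy of the projection norm forces $\dim\mathfrak{t}/\dim(\mathfrak{t}\cap\mathfrak{h})=\dim\widehat{\mathfrak{g}}_{\pm e'_i}-1\geq 3$, incompatible with the values $n=9,15,6$ attached to the three exceptional candidates. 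This leaves $\mathfrak{h}=\mathfrak{g}_2$, $n=3$; the embedding $\mathfrak{g}_2\subset\mathfrak{so}(7)$ is unique up to conjugation, the transitivity of $G_2$ on unit tangent directions forces the invariant normal metric to be the round metric on $S^7$, and a triality argument identifies $\mathrm{Spin}(7)/G_2$ with the symmetric space $\mathrm{Spin}(8)/\mathrm{Spin}(7)$. The main obstacle is this refined root-plane accounting: one must track very carefully which partner planes drop into $\mathfrak{m}$ versus $\mathfrak{h}$ in order to convert the global identity of Lemma \ref{lemma-3-3}(2) into the local inequality that kills the three exceptional isotropies.
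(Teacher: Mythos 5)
Your proposal is correct and follows essentially the same route as the paper: the same reduction via the centralizer and Lemma \ref{trivial-lemma} with an explicit $\mathrm{Ad}(G'')$-conjugate to kill $v=e_1+\cdots+e_n$, the same analysis of the projections $e'_i$ combined with Lemma \ref{lemma-3-3} giving $2n+1=\dim\mathfrak{h}/\dim(\mathfrak{t}\cap\mathfrak{h})$, the same paired-root-plane count (the paper's Lemma 6.3) eliminating $\mathfrak{e}_7,\mathfrak{e}_8,\mathfrak{f}_4$, and the same $\mathfrak{g}_2$/triality endgame identifying the round $S^7=\mathrm{Spin}(7)/\mathrm{G}_2$.
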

\smallskip

\section{The proof of Theorem \ref{main} when $\mathfrak{g}=\mathfrak{c}_n$}
\setcounter{equation}{0}
\smallskip

In this section we assume that $\mathfrak{g}=\mathfrak{c}_n=\mathfrak{sp}(n)$
with $n>2$, and that $0 \ne v\in\mathfrak{t}$ defines a CK vector field
on the Riemannian normal homogeneous space $M=G/H$.  According to Proposition
\ref{prop-3-2}, we may assume that either $v=e_1+\cdots+e_n$ or $v = e_1$
\smallskip

\subsection{The case $v=e_1+\cdots+e_n$}
\smallskip

After a suitable permutation of the $e_i$ we may assume
$\mathfrak{h}=\mathbb{R}e_1+\mathbb{R}e_{m+2}+\cdots+\mathbb{R}e_n$ and
$\mathfrak{m}=\mathbb{R}e_2+\cdots+\mathbb{R}e_{m+1}$. Consider the closed
subgroup $G'$ of $G$ whose Lie algebra $\mathfrak{g}'$ is the centralizer of
$\mathbb{R}e_{m+2}+\cdots+\mathbb{R}e_n$. Let $H'$ be the identity component
of $G'\cap H$. Then $\mathfrak{g}' = \mathfrak{g}'' \oplus \mathfrak{c}$
where $\mathfrak{g}''=\mathfrak{sp}(m+1)$ corresponds to
$\{e_1,\ldots,e_{m+1}\}$ and
$\mathfrak{c}=\mathbb{R}e_{m+2}+\cdots+\mathbb{R}e_n$ is its center. The
subalgebra $\mathfrak{h}' = \mathfrak{h}'' \oplus \mathfrak{c}$
where either $\mathfrak{h}'' = \mathfrak{h} \cap \mathfrak{g}''=\mathbb{R}e_1$
or $\mathfrak{h}'' \cong \mathfrak{a}_1$ with Cartan subalgebra $\mathbb{R}e_1$
and root plane $\mathbb{R}\mathbf{j}E_{1,1}+\mathbb{R}\mathbf{k}E_{1,1}$.
Let $G''$ be the analytic subgroup of $G'$ for $\mathfrak{g}''$ and let
$H''$ be the analytic subgroup of $G''$ for $\mathfrak{h}''$.
They are closed subgroups.  The restriction of the bi-invariant inner
product of $\mathfrak{g}$ to $\mathfrak{g}'$ and $\mathfrak{g}''$ defines
locally symmetric Riemannian normal homogeneous metrics
on $G'/H'$ and $G''/H''$ respectively. As argued before, the
orthogonal decomposition $\mathfrak{g}''=\mathfrak{h}''+\mathfrak{m}''$
is the same as $\mathfrak{g}''=(\mathfrak{g}''\cap\mathfrak{h})
+(\mathfrak{g}''\cap\mathfrak{m})$.
We can also decompose $v$ as the sum of
$v''=e_1+\cdots+e_{m+1}\in\mathfrak{t}\cap\mathfrak{g}''$ and
$v_{\mathfrak{c}}=e_{m+2}+\cdots+e_n\in\mathfrak{c}$ which is orthogonal to $\mathfrak{g}''$. By
Lemma \ref{trivial-lemma},
$v''$ defines a CK vector field on the Riemannian normal homogeneous space
$G''/H''$.
From the $\mathrm{Ad}(G'')$-orbit of $v''$ we have
$$
v''_1=\mathbf{i}\left(\mathrm{diag}\left (\left(
                 \begin{smallmatrix}
                   0 & 1 \\
                   1 & 0 \\
                 \end{smallmatrix}
               \right),1,\ldots,1\right )\right)\in\mathfrak{m}'',
$$
i.e. $||\mathrm{pr}_{\mathfrak{h}''}(v'')||>
||\mathrm{pr}_{\mathfrak{h}''}(v''_1)||=0$, which is a contradiction.
\smallskip

{\it Remark}. As in the case 5A in Section 5, for $n>2$,
$v=e_1+\cdots+e_n$ defines a CK vector field for the constant
curvature metric on $S^{4n-1}$.  That metric is normal homogeneous for
$S^{4n-1}=\mathrm{SO}(4n)/\mathrm{SO}(4n-1)$,
but it is not normal homogeneous for $S^{4n-1}=\mathrm{Sp}(n)/\mathrm{Sp}(n-1)$.
\smallskip

\subsection{The case $v=e_1$}
\smallskip
Here $v=e_1$ in Proposition \ref{prop-3-2}, and we denote
$e'_i=\mathrm{pr}_{\mathfrak{h}}(e_i)$ for $1\leqq i\leqq n$. The $e'_i$
all have the same length.
\smallskip

The orbit $\mathrm{Ad}(G)v$ contains
$v'=\mathbf{j}E_{i,i}\subset\mathfrak{g}_{\pm 2e_i}$\,, and
$\mathrm{pr}_{\mathfrak{h}}(v')\neq 0 \neq\mathrm{pr}_{\mathfrak{m}}(v')$,
for $1\leqq i\leqq n$.  So $2e'_i$ is a root of $\mathfrak{h}$ and
$\dim\widehat{\mathfrak{g}}_{\pm 2e'_i}>2$, for $1\leqq i\leqq n$.
\smallskip

Suitably choosing $e_i$ with any necessary sign changes $e_i\mapsto -e_i$,
we may assume, for $1\leqq i<j\leqq n$, that the roots $\pm 2e_i$ and
$\pm 2e_j$ project to the same pair of roots of $\mathfrak{h}$ only when
$e'_i=e'_j$. In other words $e'_i+e'_j\neq 0$, for $1\leqq i<j\leqq n$.
If we have a different presentation for the root $2e'_i$, e.g.
$2e'_i=\pm e'_j\pm e'_k$, then the plus signs must be taken and
$e'_j=e'_k=e'_i$.
\smallskip

If $e'_i=e'_j$ for some $i\neq j$, we can permute $e_i$ so that
$\mathrm{pr}_{\mathfrak{h}}^{-1}(e'_1)$ contains $\{e_1$, $\ldots$, $e_k\}$, where $k>1$,
and it does not contain any other $e_i$. So
$$
\widehat{\mathfrak{g}}_{\pm 2e'_1}=\sum_{1\leqq i\leqq k}\mathfrak{g}_{\pm 2e_i}   +\sum_{1\leqq i< j\leqq k} \mathfrak{g}_{\pm (e_i+ e_j)},
$$
in such a way that the root plane $\mathfrak{h}_{\pm e'_1}$ is
linearly generated by two matrices $u$ and $w$ in $\mathfrak{sp}(k)\subset
\mathfrak{sp}(n)$ with nonzero elements only in the upper left $k\times k$
corner.  So $[u,w]$, which is a nonzero multiple of $e'_1$, is represented
by a matrix in $\mathbb{R}e_1+\cdots+\mathbb{R}e_k \in \mathfrak{sp}(k)$.
Since $e_i-e_j\in\mathfrak{m}$ for $1\leqq i<j\leqq k$, we have
$e'_1=\frac{1}{k}(e_1+\cdots+e_k)$. Any root plane
$\mathfrak{g}_{\pm(e_i-e_j)}$, $1\leqq i<j\leqq k$ is contained in
$\mathfrak{m}$.
\smallskip

Let $G'$ be the closed subgroup of $G$ with Lie algebra $\mathfrak{g}'=
\mathfrak{sp}(k)$ corresponding to $\{e_1$, $\ldots$, $e_k\}$, and
$H'$ the identity component of $G'\cap H$.
The Lie algebra $\mathfrak{h}'=\mathfrak{g}'\cap\mathfrak{h}=
\mathbb{R}e'_1+\mathfrak{h}_{\pm e'_1}$.
The restriction of the bi-invariant inner product of $\mathfrak{g}$ to $\mathfrak{g}'$ defines
a Riemannian normal homogeneous space $M' = G'/H'$. The corresponding
orthogonal decomposition $\mathfrak{g}'=\mathfrak{h}'+\mathfrak{m}'$
coincides with
$\mathfrak{g}'=(\mathfrak{g}'\cap\mathfrak{h})+(\mathfrak{g}'\cap\mathfrak{m})$,
$v=e_1\in\mathfrak{g}'$ defines a CK vector field on the
normal homogeneous space $G'/H'$, and $\mathfrak{h}'$ is spanned by
$$
w_1=\mathbf{i}I_{k\times k}\,,\,\, w_2=\mathbf{j}A+\mathbf{k}B,\mbox{ and }
w_3=-\mathbf{j}B+\mathbf{k}A,
$$
where $A$ and $B$ are real symmetric $k\times k$ matrices.
From $w_1\in\mathbb{R}[w_2,w_3]$, we have $AB=BA$ and $A^2+B^2=cI>0$.

By a suitable $\mathrm{Ad}(G')$ conjugation (which is a $\mathrm{SO}(k)$
conjugation on $\mathfrak{sp}(k)$) we diagonalize $A$ and $B$
simultaneously. By a suitable $\mathrm{Ad}(G')$ conjugation and scalar
multiplications, i.e. $\mathrm{Sp}(k)$-conjugation, we may assume
$w_2=\mathbf{j}I_{k\times k}$ and $w_3=\mathbf{k}I_{k\times k}$. Notice
\begin{eqnarray*}
v'_1&=&\mathrm{diag}\left (\left(
                   \begin{smallmatrix}
                     \frac{\sqrt{2}}{2} & \frac{\sqrt{2}}{2} \\
                     -\frac{\sqrt{2}}{2} & \frac{\sqrt{2}}{2} \\
                   \end{smallmatrix}
                 \right),1,\ldots,1\right )\cdot v
			\cdot\mathrm{diag}\left (\left(
                   \begin{smallmatrix}
                     \frac{\sqrt{2}}{2} & -\frac{\sqrt{2}}{2} \\
                     \frac{\sqrt{2}}{2} & \frac{\sqrt{2}}{2} \\
                   \end{smallmatrix}
                 \right),1,\ldots,1\right )\nonumber\\
                 &=&\mathrm{diag}\left (\left(
                 \begin{smallmatrix}
                 \frac12\mathbf{i} & -\frac12\mathbf{i} \\
                 -\frac12\mathbf{i}& \frac12\mathbf{i} \\
                                         \end{smallmatrix}
                                       \right),0,\ldots,0
                 \right ),\mbox{ and}\nonumber\\
v'_2&=&\mathrm{diag}(1,-\mathbf{j},1,\ldots,1)\cdot v'\cdot
\mathrm{diag}(1,\mathbf{j},1,\ldots,1)\nonumber\\
&=&
\mathrm{diag}\left (\left(
                   \begin{smallmatrix}
                     \frac12\mathbf{i} & -\frac12\mathbf{k} \\
                     -\frac12\mathbf{k} &-\frac12\mathbf{i} \\
                   \end{smallmatrix}
                 \right),0,\ldots,0\right )
\end{eqnarray*}
belong to $\mathrm{Ad}(G')v$, but
$||\mathrm{pr}_{\mathfrak{h}'}(v)||>||\mathrm{pr}_{\mathfrak{h}'}(v'')||=0$.
This is a contradiction. So the $e'_i$ must all be distinct
and $\widehat{\mathfrak{g}}_{\pm 2e'_i}=
\mathfrak{g}_{\pm 2e_i}=\mathfrak{h}_{\pm 2e'_i}$ for $1\leqq i\leqq n$.
This contradicts our earlier
conclusion that $\dim\widehat{\mathfrak{g}}_{\pm 2e'_i}>2$.
\smallskip

In summary, we have proved
\begin{proposition}\label{no-cn}
Let $G$ be a compact connected simple Lie group with
$\mathfrak{g}=\mathfrak{c}_n$ and $n>2$.  Let $H$ be closed subgroup with
$0<\dim H<\dim G$ such that $G/H$ is a Riemannian normal homogeneous space.
Then there is no nonzero vector $v\in\mathfrak{g}$ that
defines a CK vector field on $G/H$.
\end{proposition}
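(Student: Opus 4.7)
The plan is to invoke Proposition \ref{prop-3-2} to reduce to the two candidate vectors $v = e_1 + \cdots + e_n$ and $v = e_1$ (up to nonzero scalar and the Weyl group action), and then rule out each candidate by restricting via Lemma \ref{trivial-lemma} to a small symplectic subgroup and exhibiting an element of $\mathrm{Ad}(G)v$ whose $\mathfrak{h}$-projection has the wrong length. Throughout I will work with the standard $\mathfrak{sp}(n)$ model of quaternionic matrices, so that the long root planes $\mathfrak{g}_{\pm 2e_i} = \mathbb{R}\mathbf{j} E_{i,i} + \mathbb{R}\mathbf{k} E_{i,i}$ and the short root planes are explicit.

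For $v = e_1 + \cdots + e_n$, the parallel argument to the $\mathfrak{b}_n$ case of Proposition \ref{prop-3-2} forces $\mathfrak{t}\cap\mathfrak{h}$ and $\mathfrak{t}\cap\mathfrak{m}$ each to be spanned by subsets of the $e_i$\,. After permuting the $e_i$ I may assume $\mathfrak{t}\cap\mathfrak{h} = \mathbb{R}e_1 + \mathbb{R}e_{m+2} + \cdots + \mathbb{R}e_n$ and $\mathfrak{t}\cap\mathfrak{m} = \mathbb{R}e_2 + \cdots + \mathbb{R}e_{m+1}$. Let $\mathfrak{g}'$ be the centralizer of $\mathbb{R}e_{m+2} + \cdots + \mathbb{R}e_n$; it splits as $\mathfrak{g}'' \oplus \mathfrak{c}$ with $\mathfrak{g}'' \cong \mathfrak{sp}(m+1)$ and central $\mathfrak{c}$. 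Write $v = v'' + v_{\mathfrak{c}}$ accordingly. Lemma \ref{trivial-lemma} then says $v'' = e_1 + \cdots + e_{m+1}$ defines a CK vector field on $G''/H''$, where $\mathfrak{h}''$ is either $\mathbb{R}e_1$ or an $\mathfrak{a}_1$ with Cartan subalgebra $\mathbb{R}e_1$. A single quaternionic matrix conjugation (swapping the first two diagonal entries) produces an element of $\mathrm{Ad}(G'')v''$ of the form $\mathbf{i}\,\mathrm{diag}(\left(\begin{smallmatrix} 0 & 1 \\ 1 & 0 \end{smallmatrix}\right), 1, \ldots, 1)$ that lies entirely in $\mathfrak{m}''$, contradicting the CK property.

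For $v = e_1$, set $e'_i = \mathrm{pr}_{\mathfrak{h}}(e_i)$; since the Weyl group permutes the $\pm e_i$ with arbitrary signs, all $e'_i$ have the same length. The orbit $\mathrm{Ad}(G) e_1$ contains the vectors $\mathbf{j} E_{i,i}$ and $\mathbf{k} E_{i,i}$ of $\mathfrak{g}_{\pm 2 e_i}$, so each $2 e'_i$ is a nonzero root of $\mathfrak{h}$ and, crucially, $\dim \widehat{\mathfrak{g}}_{\pm 2 e'_i} > 2$. Combined with the sign ambiguity $e_i \mapsto -e_i$, this forces the existence of $i \ne j$ with $e'_i = e'_j$. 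Grouping the $e_i$ accordingly, I will restrict to the $\mathrm{Sp}(k)$ subgroup $G'$ associated to $\{e_1,\ldots,e_k\}$ via Lemma \ref{trivial-lemma}; then $\mathfrak{h}' = \mathfrak{g}' \cap \mathfrak{h}$ contains $\mathbb{R} e'_1 = \mathbb{R}(e_1 + \cdots + e_k)$ together with a root plane spanned by two matrices $\mathbf{j} A + \mathbf{k} B$ and $-\mathbf{j} B + \mathbf{k} A$ with real symmetric commuting $A, B$ satisfying $A^2 + B^2 = c I$.

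The technical heart will be the last step: using an $\mathrm{SO}(k)$ conjugation inside $\mathrm{Sp}(k)$ to simultaneously diagonalize $A$ and $B$, followed by a rescaling, to normalize $\mathfrak{h}'$ to $\mathbb{R} \mathbf{i} I_k + \mathbb{R} \mathbf{j} I_k + \mathbb{R} \mathbf{k} I_k$. Once this canonical form is reached, an explicit pair of $2 \times 2$ block conjugations produces elements of $\mathrm{Ad}(G') e_1$ whose $\mathfrak{h}'$-projection vanishes, delivering the required contradiction. I expect the main obstacle to be verifying that the normalization of $\mathfrak{h}'$ can actually be achieved without destroying $v = e_1$; checking that $v$ lies in a common eigenspace at every stage of the reduction is what makes the explicit quaternionic conjugation argument go through.
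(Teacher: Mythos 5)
Your proposal follows essentially the same route as the paper's own proof: the same reduction via Proposition \ref{prop-3-2} to $v=e_1+\cdots+e_n$ and $v=e_1$, the same centralizer/$\mathfrak{sp}(m+1)$ restriction with the matrix $\mathbf{i}\,\mathrm{diag}\bigl(\bigl(\begin{smallmatrix}0&1\\1&0\end{smallmatrix}\bigr),1,\ldots,1\bigr)$ in the first case, and in the second case the same use of $\mathbf{j}E_{i,i}$ to get $\dim\widehat{\mathfrak{g}}_{\pm 2e'_i}>2$, the coincidence $e'_i=e'_j$, the $\mathrm{Sp}(k)$ restriction with $\mathfrak{h}'$ spanned by $\mathbf{i}I$, $\mathbf{j}A+\mathbf{k}B$, $-\mathbf{j}B+\mathbf{k}A$, the simultaneous diagonalization and normalization to $\mathbf{j}I,\mathbf{k}I$, and the $2\times 2$ block conjugations killing the $\mathfrak{h}'$-projection. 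The normalization worry you raise is harmless for exactly the reason you suspect: only the $\mathrm{Ad}(G')$-orbit of $e_1$ enters the contradiction, and that orbit is conjugation-invariant.
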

\smallskip

\section{The case $\mathfrak{g}=\mathfrak{d}_n$}
\setcounter{equation}{0}
\smallskip

In this section $\mathfrak{g}=\mathfrak{d}_n=\mathfrak{so}(2n)$ with $n>3$,
and $0 \ne v \in \mathfrak{g}$ defines a CK vector field on the Riemannian
normal homogeneous space $G/H$.
\smallskip

\subsection{The case $v=e_1+\cdots+e_n$}
We consider the case $v=e_1+\cdots e_n$ of Proposition \ref{prop-3-2} with
$n > 4$. If $n=4$, we can use the outer automorphism of $\mathfrak{g}$  that
changes $v$ to $e_1$, which will be discussed in the next case.
By an argument similar to that of Proposition \ref{prop-3-2}(2), we show
$$
\langle\mathrm{pr}_{\mathfrak{h}}(\pm e_i\pm e_j),\pm e_k\pm e_l\rangle
=\langle\mathrm{pr}_{\mathfrak{m}}(\pm e_i\pm e_j),\pm e_k\pm e_l\rangle=0,
$$
whenever $i$, $j$, $k$ and $l$ are different indices. Changing $k$ and $l$ arbitrarily, and
taking linear combinations of these two equalities,
$\mathrm{pr}_{\mathfrak{h}}(e_i)$ and $\mathrm{pr}_{\mathfrak{m}}(e_i)$ are
contained in $\mathbb{R}e_i+\mathbb{R}e_j$. Change $j$ as well, we get
$\mathrm{pr}_{\mathfrak{h}}(e_i)$ and $\mathrm{pr}_{\mathfrak{m}}(e_i)$ are contained
by $\cap_{j\neq i}(\mathbb{R}e_i+\mathbb{R}e_j)=\mathbb{R}e_i$. So either
$e_i\in\mathfrak{h}$ or
$e_i\in\mathfrak{m}$ for each $i$.
\smallskip

Let $m = \dim\mathfrak{t}\cap\mathfrak{m}$.  We will prove $m = 1$.
For if $m > 1$ we can suitably permute $e_i$ so that
$\mathfrak{h}\cap\mathfrak{t}
=\mathbb{R}e_1+\mathbb{R}e_{m+2}+\cdots+\mathbb{R}e_n$
and $\mathfrak{m}\cap\mathfrak{t}
=\mathbb{R}e_2+\cdots+\mathbb{R}e_{m+1}$.
Let $\mathfrak{g}'$ be the centralizer of $\mathbb{R}e_{m+2}+\cdots+
\mathbb{R}e_n$ in $\mathfrak{g}$ and $G'$ the analytic subgroup of $G$
for $\mathfrak{g}'$.  Similarly $\mathfrak{h}'=\mathfrak{g}'\cap\mathfrak{h}$
and $H'$ is the corresponding analytic subgroup.
Then $\mathfrak{g}'=\mathfrak{g}''\oplus\mathfrak{c}$ where
$\mathfrak{g}''\cong \mathfrak{so}(2m+2)$ corresponds to
the $(2m+2)\times(2m+2)$-block in the
upper left corner, and where $\mathfrak{c}=\mathbb{R}e_{m+1}+\cdots+\mathbb{R}e_n$
is the center of $\mathfrak{g}'$.  Observe
$\mathfrak{h}'=\mathfrak{h}''\oplus\mathfrak{c}$ where
$\mathfrak{h}''=\mathfrak{g}''\cap\mathfrak{h}'$ is either the
abelian subalgebra $\mathbb{R}e_1$ or is isomorphic to $\mathfrak{a}_1$ with
Cartan subalgebra $\mathbb{R}e_1$.
Let $G''$ be the analytic subgroup of $G'$ with Lie algebra
$\mathfrak{g}''$ and $H''$ the analytic subgroup of $G''$
for $\mathfrak{h}''$. As we argued in Section \ref{subsection-b-n-1},
the restriction of the bi-invariant inner product of $\mathfrak{g}$ to
$\mathfrak{g}''$ defines a Riemannian normal homogeneous metric on $G''/H''$,
such that
the orthogonal decomposition $\mathfrak{g}''=\mathfrak{h}''+\mathfrak{m}''$
coincides with $\mathfrak{g}''=(\mathfrak{g}''\cap\mathfrak{h})+
(\mathfrak{g}''\cap\mathfrak{m})$. For the corresponding decomposition
$v=v''+v_{\mathfrak{c}}$,
$$
v'' = e_1 + \dots + e_{m+1} \in \mathfrak{g}'' \text{ and }
v_{\mathfrak{c}} = e_{m+2} + \dots + e_n \in \mathfrak{c}\subset\mathfrak{g}''^\perp,
$$
$v''$ defines a CK vector field on the normal homogeneous space
$M'' = G''/H''$.
\smallskip

If $\mathfrak{h}''$ is abelian, then we can choose
$$
v''_1=\mathrm{diag}\left (\left(
                     \begin{smallmatrix}
                       0_{2\times 2} & I_{2\times 2}\\
                       -I_{2\times 2} & 0_{2\times 2}
                     \end{smallmatrix}
                   \right),\left(
                             \begin{smallmatrix}
                               0 & 1 \\
                               -1 & 0 \\
                             \end{smallmatrix}
                           \right),\ldots,
                           \left(
                             \begin{smallmatrix}
                               0 & 1 \\
                               -1 & 0 \\
                             \end{smallmatrix}
                           \right)\right )\in\mathfrak{m}''
			\cap \Ad(G'')v''.
$$
Then $||\mathrm{pr}_{\mathfrak{h}''}(v'')||>
||\mathrm{pr}_{\mathfrak{h}''}(v''_1)||=0$,
which is a contradiction. If $\mathfrak{h}''$ is not abelian, then we can
use suitable $\mathrm{Ad}(G')$ action, i.e. $\mathrm{SO}(2m+2)$ conjugation,
to move $\mathfrak{h}''$ to the subalgebra $\mathfrak{so}(3)$ for the
$3\times 3$-block in the upper left  corner. We can choose
$$
v''_2=\mathrm{diag}\left ( \left(
                     \begin{smallmatrix}
                       0_{3\times 3} & I_{3\times 3}\\
                       -I_{3\times 3} & 0_{3\times 3}
                     \end{smallmatrix}
                   \right),\left(
                             \begin{smallmatrix}
                               0 & 1 \\
                               -1 & 0 \\
                             \end{smallmatrix}
                           \right),\ldots,
                           \left(
                             \begin{smallmatrix}
                               0 & 1 \\
                               -1 & 0 \\
                             \end{smallmatrix}
                           \right)\right )\in\mathfrak{m}''
			\cap \Ad(G'')v''.
$$
Then we again have $||\mathrm{pr}_{\mathfrak{h}''}(v'')||>
||\mathrm{pr}_{\mathfrak{h}'}(v''_2)||=0$,
which is a contradiction.  This completes the proof that $m = 1$.
\smallskip

Now we suitably permute $e_i$ so that
$\mathfrak{t}\cap\mathfrak{h}=\mathbb{R}e_1+\cdots+\mathbb{R}e_{n-1}$
and $\mathfrak{t}\cap\mathfrak{m}=\mathbb{R}e_n$. Whenever $1\leqq i<j<n$,
we have vectors
$$
v'=\sum_{1\leqq k\leqq n-1,i\neq k\neq j}(E_{2k-1,2k}-E_{2k,2k-1}) +u'
	\in \Ad(G)v
$$
in which the possibilities for $u'$ are
$$
E_{2i-1,2j-1}+E_{2i,2j}-E_{2j-1,2i-1}-E_{2j,2i} \text{ and }
E_{2i-1,2j}-E_{2i,2j-1}+E_{2j-1,2i}-E_{2j,2i-1}
$$
in $\mathfrak{g}_{\pm(e_i-e_j)}$, and
$$
E_{2i-1,2j-1}-E_{2i,2j}-E_{2j-1,2i-1}+E_{2j,2i} \text{ and }
E_{2i-1,2j}+E_{2i,2j-1}+E_{2j-1,2i}+E_{2j,2i-1}
$$
in $\mathfrak{g}_{\pm(e_i+e_j)}$.
The condition that
$||\mathrm{pr}_{\mathfrak{h}}(v)||=||\mathrm{pr}_{\mathfrak{h}}(v')||$
for each choice of $u'$ indicates each $u'\in\mathfrak{h}$, i.e.
$\mathfrak{g}_{\pm(e_i\pm e_j)}\subset\mathfrak{h}$ for $1\leqq i<j<n$.
A similar argument can also show each $e_i$ is a root of $\mathfrak{h}$,
and $\widehat{\mathfrak{g}}_{\pm e_i}=
\mathfrak{g}_{\pm(e_i+e_n)}+\mathfrak{g}_{\pm(e_i-e_n)}$.  Now,
up to the action of $\mathrm{Ad}(G)$, $\mathfrak{h}$ is uniquely determined,
and is
the standard $\mathfrak{so}(2n-1)$ in $\mathfrak{so}(2n)$. We can also use
similar argument as for the case 5B in Section 5, to prove directly the homogeneous space is symmetric, i.e. $[\mathfrak{m},\mathfrak{m}]\subset\mathfrak{h}$. Then $G/H$
is Riemannian symmetric, covered by the sphere
$S^{2n-1}=\mathrm{SO}(2n)/\mathrm{SO}(2n-1)$ of positive constant curvature.
It is well known fact that $v=e_1+\cdots+e_n$ defines a CK vector field on
it, because its centralizer $\mathrm{U}(n)$ acts transitively on $S^{2n-1}$.
\smallskip

\subsection{The case $v=e_1$}
Finally we consider the case $v=e_1$ in Proposition \ref{prop-3-2}.
Denote $e'_i=\mathrm{pr}_{\mathfrak{h}}(e_i)$ for $1\leqq i\leqq n$.
They all have the same length. Either $e'_i+e'_j$ or $e'_i-e'_j$ is a
root of $\mathfrak{h}$ because $v'=E_{2i-1,2j-1}-E_{2j-1,2i-1}\in
\mathfrak{g}_{\pm e_i+e_j}+\mathfrak{g}_{\pm e_i-e_j}$ belongs to the $\mathrm{Ad}(G)$-orbit of $v$, i.e.
$||\mathrm{pr}_{\mathfrak{h}}(v')||=||\mathrm{pr}_{\mathfrak{h}}(v)||>0$.
\smallskip

{\bf Distinct Roots.}
We first prove that $\{\pm e'_1,\ldots,\pm e'_n\}$ consists of $2n$ distinct roots of
$\mathfrak{h}$.  For if not, then there are indices $i\neq j$ such that
one of $e'_i\pm e'_j=0$.  Then $2e'_i=e'_i\mp e'_j$ is a root of
$\mathfrak{h}$.
If $2e'_i=\pm e'_k\pm e'_l$, then $||e'_i||=||e'_k||=||e'_l||$ tells us
that the pairs $\{\pm e'_i\}$, $\{\pm e'_k\}$ and $\{\pm e'_l\}$
are equal. If $e'_i\neq\pm e'_k$ and $e'_i+e'_k$ is a root of
$\mathfrak{h}$
then, using $\langle 2e'_i,e'_i+e'_k\rangle>0$,
$2e'_i-(e'_i+e'_k)=e'_i-e'_k$ is a root of $\mathfrak{h}$. Similarly
if $e'_i-e'_k$ is a root of $\mathfrak{h}$ then $e'_i+e'_k$ is a root
of $\mathfrak{h}$.  Notice that
$2e'_i$ is a long root and $e'_i\pm e'_k$ are short roots orthogonal to
each other. This can only happen in a subalgebra of type $\mathfrak{c}_2$\,.
Thus $2(e'_i+e'_k)-2e'_i=2e'_k$ is also a long root of
$\mathfrak{h}$ with $\langle e'_i,e'_k\rangle=0$, and there must be an
index $l$ with $e'_k=\pm e'_l$. From this argument all $\pm 2e'_i$s are
long roots of $\mathfrak{h}$. If there are exactly $m$ different pairs
$\{\pm e'_i\}$, then
$\mathfrak{h}$ is isomorphic to $\mathfrak{c}_m$.
\smallskip

Suitably choosing $e_i$, then we can
assume $e'_i+e'_j\ne 0$ for $i\ne j$. Without loss of generality, we may
assume that $\mathrm{pr}^{-1}_{\mathfrak{h}}(e'_1)$
contains $\{e_1$, $\ldots$, $e_k\}$ but no other $e_l$ and does not
contain any $-e_l$\,.
Then
$$
\mathfrak{g}_{\pm (e_i-e_j)}\subset\mathfrak{m},\mbox{ for } 1\leqq i<j\leqq k
\mbox{ and }
\mathfrak{h}_{\pm 2e'_1}\subset\widehat{\mathfrak{g}}_{\pm 2e'_1}=
{\sum}_{1\leqq i<j\leqq k}\mathfrak{g}_{\pm(e_i+e_j)}.
$$
Now $e'_1\subset[\mathfrak{h}_{\pm 2e'_1},\mathfrak{h}_{\pm 2e'_1}]$ is
realized as a matrix in $\mathfrak{so}(2k)$ corresponding the left upper
$(2k\times 2k)$--block, i.e. $e'_1\in\mathbb{R}e_1+\cdots+\mathbb{R}e_k$.
By our assumption, $e_i-e_j\in\mathfrak{t}\cap\mathfrak{m}$
for $1\leqq i<j\leqq k$, so $e'_1= \frac1k(e_1+\cdots+e_k)$.
\smallskip

Let $G'$ be the analytic subgroup of $G$ with Lie algebra $\mathfrak{g}'=
\mathfrak{so}(2k)$ corresponding the  upper left $2k\times 2k$
corner and $H'$ the analytic subgroup for
$\mathfrak{h}'=\mathfrak{g}'\cap\mathfrak{h}=\mathbb{R}e'_1+
\mathfrak{h}_{\pm 2e'_1}$\,. Then $\mathfrak{h}'\cong \mathfrak{a}_1$.
Also,
$$
\mathfrak{g}'\cap\mathfrak{m}=
{\sum}_{1\leqq i< k}\,\,\mathbb{R}(e_{i+1}-e_i)\,\,+\,\,
{\sum}_{1\leqq i<j\leqq k}\,\,\mathfrak{g}_{\pm (e_i-e_j)}.
$$
The restriction to $\mathfrak{g}'$ of the bi-invariant inner product on
$\mathfrak{g}$ defines a Riemannian normal homogeneous metric on $G'/H'$.
The orthogonal decomposition
$\mathfrak{g}'=\mathfrak{h}'+\mathfrak{m}'$ coincides with the decomposition
$\mathfrak{g}'=(\mathfrak{g}'\cap\mathfrak{h})+(\mathfrak{g}'\cap\mathfrak{m})$.
So the vector $v=e_1\in\mathfrak{g}'$ defines a CK vector field on the
Riemannian normal homogeneous space $G'/H'$.
\smallskip

The orbit $\mathrm{Ad}(G')v$ contains an orthonormal basis of $\mathfrak{g}'$
which in turn contains an orthonormal basis for $\mathfrak{t}\cap\mathfrak{g}'$.
By Lemma \ref{lemma-3-3},
$$
2k-1=\frac{\dim\mathfrak{g}'}{\dim(\mathfrak{t}\cap\mathfrak{g}')}=
\frac{\dim\mathfrak{h}'}{\dim(\mathfrak{h}'\cap\mathfrak{t})}=3,
\mbox{ so } k = 2.
$$
\smallskip

Suitably permuting the $e_i$, we can assume the distinct $e'_i$ are given
by $$
\{e'_1=e'_2,e'_3=e'_4,\ldots,e'_{n-1}=e'_{n}\}.
$$
Then
$\dim\widehat{\mathfrak{g}}_{\pm(e'_1\pm e'_3)}=8,$ in which 2 dimensions belong to
$\mathfrak{h}$ and the other 6 dimensions belong to $\mathfrak{m}$.
The $\mathrm{Ad}(G)$--orbit of $v=e_1$ contains an orthonormal basis of
$\mathfrak{t}$ consisting of all the $e_i$\,.  It also contains an
orthonormal basis of $\widehat{\mathfrak{g}}_{\pm(e_1+e_3)}
+\widehat{\mathfrak{g}}_{\pm(e_1-e_3)}$. As in Lemma \ref{lemma-3-3}, we have
$$
\frac{\dim\mathfrak{t}}{\dim(\mathfrak{t}\cap\mathfrak{h})}=
\frac{\dim(\widehat{\mathfrak{g}}_{\pm(e_1+e_3)}+
\widehat{\mathfrak{g}}_{\pm(e_1-e_3)})}{\dim (\mathfrak{h}\cap(\widehat{\mathfrak{g}}_{\pm(e_1+e_3)}+
\widehat{\mathfrak{g}}_{\pm(e_1-e_3)}))},
$$
which is a contradiction because the left side is $2$ and the right side is
$4$. In summary, we have proved
\begin{lemma}
Suppose $\mathfrak{g}=\mathfrak{d}_n$ with $n>3$.  Suppose that
$v=e_1\in\mathfrak{t}$ defines a CK vector field on the Riemannian normal
homogeneous space $G/H$. Denote $e'_i= \mathrm{pr}_{\mathfrak{h}}(e_i)$ for
$1\leqq i\leqq n$, then $\{\pm e'_1,\ldots,\pm e'_n\}$ has cardinality $2n$.
\end{lemma}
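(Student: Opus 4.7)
The plan is to argue by contradiction: suppose the cardinality of $\{\pm e'_1,\ldots,\pm e'_n\}$ is strictly less than $2n$. Since all $e'_i$ have the same length (by the CK condition applied to Weyl conjugates of $v$) and the length is nonzero (otherwise $\mathfrak{t}\subset\mathfrak{m}$, contradicting the construction of $\mathfrak{t}$), the failure must come from a coincidence $e'_i=\pm e'_j$ with $i\ne j$. I would first argue that, after applying a Weyl group element of $\mathfrak{d}_n$ possibly composed with the outer automorphism $e_n\mapsto -e_n$, one may arrange all such coincidences to be of the form $e'_i=e'_j$; the alternative $e'_i=-e'_j$ produces $2e'_i=e'_i-e'_j$ as a root of $\mathfrak{h}$, and the analysis of long-versus-short roots forces $\mathfrak{h}\cong \mathfrak{c}_m$, which can then be absorbed into the same reduction by flipping signs on appropriate $e_i$.

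Next I would fix a nontrivial fiber of $\mathrm{pr}_{\mathfrak{h}}$. Permuting indices, assume $\mathrm{pr}_{\mathfrak{h}}^{-1}(e'_1)=\{e_1,\ldots,e_k\}$ with $k>1$ and no other $\pm e_l$ mapping to $\pm e'_1$. Then $e_i-e_j\in\mathfrak{t}\cap\mathfrak{m}$ and $\mathfrak{g}_{\pm(e_i-e_j)}\subset\mathfrak{m}$ for $1\leqq i<j\leqq k$, while
$$\mathfrak{h}_{\pm 2e'_1}\subset \widehat{\mathfrak{g}}_{\pm 2e'_1}={\sum}_{1\leqq i<j\leqq k}\mathfrak{g}_{\pm(e_i+e_j)}.$$
Realizing the bracket $[\mathfrak{h}_{\pm 2e'_1},\mathfrak{h}_{\pm 2e'_1}]\ni e'_1$ inside the $\mathfrak{so}(2k)$ block attached to $\{e_1,\ldots,e_k\}$ and using that $\mathfrak{m}$ absorbs every $e_i-e_j$ in this block, I would conclude $e'_1=\tfrac{1}{k}(e_1+\cdots+e_k)$.

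Now comes the key reduction. Let $G'$ be the analytic subgroup with $\mathfrak{g}'=\mathfrak{so}(2k)$ corresponding to the block indexed by $\{e_1,\ldots,e_k\}$, and $H'$ the identity component of $G'\cap H$; then $\mathfrak{h}'=\mathbb{R}e'_1+\mathfrak{h}_{\pm 2e'_1}\cong\mathfrak{a}_1$. By Lemma \ref{trivial-lemma}, $v=e_1\in\mathfrak{g}'$ still defines a CK vector field on $G'/H'$. Since $\mathrm{Ad}(G')v$ already contains an orthonormal basis of $\mathfrak{g}'$ (hence of $\mathfrak{t}\cap\mathfrak{g}'$), Lemma \ref{lemma-3-3} gives
$$2k-1=\frac{\dim\mathfrak{g}'}{\dim(\mathfrak{t}\cap\mathfrak{g}')}=\frac{\dim\mathfrak{h}'}{\dim(\mathfrak{t}\cap\mathfrak{h}')}=3,$$
so $k=2$: every nontrivial fiber has size exactly two.

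Finally I would derive the contradiction from two such paired fibers. After a further permutation we have $e'_1=e'_2$ and $e'_3=e'_4$, so the $8$-dimensional subspace $\widehat{\mathfrak{g}}_{\pm(e_1+e_3)}+\widehat{\mathfrak{g}}_{\pm(e_1-e_3)}$ meets $\mathfrak{h}$ in the two root planes $\mathfrak{h}_{\pm(e'_1\pm e'_3)}$ (total dimension $4$) and meets $\mathfrak{m}$ in the remaining $4$ dimensions. Applying the Lemma \ref{lemma-3-3} ratio argument to the orthonormal basis of $\mathfrak{t}$ given by the $e_i$ and comparing with the same ratio on this $8$-dimensional block, I obtain
$$\frac{\dim\mathfrak{t}}{\dim(\mathfrak{t}\cap\mathfrak{h})}=2\ne 4=\frac{\dim(\widehat{\mathfrak{g}}_{\pm(e_1+e_3)}+\widehat{\mathfrak{g}}_{\pm(e_1-e_3)})}{\dim(\mathfrak{h}\cap(\widehat{\mathfrak{g}}_{\pm(e_1+e_3)}+\widehat{\mathfrak{g}}_{\pm(e_1-e_3)}))},$$
the required contradiction. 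The main obstacle is the initial sign-change reduction: because $W(\mathfrak{d}_n)$ only allows an even number of sign changes, one must carefully combine Weyl group moves with the outer automorphism to funnel both coincidence patterns $e'_i=e'_j$ and $e'_i=-e'_j$ into a single uniform case before the fiber analysis can begin.
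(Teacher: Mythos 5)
Your route is essentially the paper's: normalize signs so coincidences read $e'_i=e'_j$, compute $e'_1=\tfrac1k(e_1+\cdots+e_k)$, cut down to the $\mathfrak{so}(2k)$ block where $\mathfrak{h}'=\mathbb{R}e'_1+\mathfrak{h}_{\pm 2e'_1}\cong\mathfrak{a}_1$ and Lemma \ref{lemma-3-3} forces $k=2$, then compare averaged ratios. However, your decisive last step is numerically inconsistent as written. With $e'_1=e'_2$ and $e'_3=e'_4$, the space $\widehat{\mathfrak{g}}_{\pm(e'_1+e'_3)}+\widehat{\mathfrak{g}}_{\pm(e'_1-e'_3)}$ is not $8$-dimensional: each summand is already $8$-dimensional, being the sum of the four root planes $\mathfrak{g}_{\pm(e_i\pm e_j)}$ with $i\in\{1,2\}$, $j\in\{3,4\}$ and a fixed sign, so the total is $16$, of which only $4$ dimensions (one root plane of $\mathfrak{h}$ in each summand) lie in $\mathfrak{h}$. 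Your stated figures --- an $8$-dimensional block meeting $\mathfrak{h}$ in $4$ dimensions --- would give the ratio $8/4=2$, which equals $\dim\mathfrak{t}/\dim(\mathfrak{t}\cap\mathfrak{h})=2$ and yields no contradiction at all; the displayed value $4$ does not follow from them. The correct count $16/4=4\neq 2$ (this is what the paper does) restores the contradiction, and one should also note, as the paper does, that $\mathrm{Ad}(G)e_1$ contains an orthonormal basis of this $16$-dimensional block (matrices of the form $E_{pq}-E_{qp}$), which is what licenses the averaging argument there.

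A second point you gloss over: both the equality $\dim\mathfrak{t}/\dim(\mathfrak{t}\cap\mathfrak{h})=2$ and the existence of a \emph{second} pair $e'_3=e'_4$ require that every index is paired, not merely that every nontrivial fiber of $\mathrm{pr}_{\mathfrak{h}}$ has size two. This is exactly what the long/short-root ($\mathfrak{c}_2$-subsystem) analysis delivers in the paper: one coincidence makes $2e'_i$ a root of $\mathfrak{h}$; then for every $k$ both $e'_i\pm e'_k$ are short roots, $2e'_k$ is a long root, and since $\widehat{\mathfrak{g}}_{\pm 2e'_k}\neq 0$ while all $e'_l$ have equal length, some $e'_l=\pm e'_k$ with $l\neq k$. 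You invoke the $\mathfrak{h}\cong\mathfrak{c}_m$ conclusion only as a device for converting $e'_i=-e'_j$ into $e'_i=e'_j$, but it is this argument that guarantees all fibers are nontrivial, hence (after $k=2$) that $n$ is even, $\dim(\mathfrak{t}\cap\mathfrak{h})=n/2$, and a second pair exists. With the dimension count corrected and this pairing step made explicit, your proof coincides with the paper's.
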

\smallskip

Second, we consider the case where one of $e'_i\pm e'_j$ is a
root of $\mathfrak{h}$ and the other is not, for any $1\leqq i<j\leqq n$.
Given a root $\alpha'$ of $\mathfrak{h}$,
we denote $\dim\widehat{\mathfrak{g}}_{\pm\alpha'}=2k_{\alpha'}$ and
$\dim(\mathfrak{t}\cap\mathfrak{h})=h$, and we express
$\widehat{\mathfrak{g}}_{\pm \alpha'}=
\sum_{i=1}^{k_{\alpha'}}\mathfrak{g}_{\pm\alpha_i}$.
By an argument similar to that of Lemma \ref{lemma-5-3}, we see
$$
\frac{n}{h}=\frac{\dim\mathfrak{t}}{\dim(\mathfrak{t}\cap\mathfrak{h})}=
{\dim\widehat{\mathfrak{g}}_{\pm\alpha'}}
=2k_{\alpha'}\,,\mbox{ i.e. } n=2k_{\alpha'}h.
$$
Thus $k = k_{\alpha'}$ is independent of the
choice of $\alpha'$. Denote the number of roots of $\mathfrak{g}$ and
$\mathfrak{h}$ by $|\Delta|$ and $|\Delta'|$, respectively.  Then
$|\Delta|=2k|\Delta'|$. But $|\Delta|=2n(n-1)$, which implies
$|\Delta'|=2h(2kh-1)\geqq 4h^2-2h$.  Calculate $|\Delta'|$ and $4h^2-2h$
for each simple Lie algebra:
$$
\begin{tabular}{|l|l|l|l|l|l|l|l|l|l|}\hline
& $\mathfrak{a}_q$ & $\mathfrak{b}_q$ & $\mathfrak{c}_q$ & $\mathfrak{d}_q$
	& $\mathfrak{g}_2$ & $\mathfrak{f}_4$ & $\mathfrak{e}_6$
	& $\mathfrak{e}_7$ & $\mathfrak{e}_8$ \\
\hline
$|\Delta'|$ & $q(q+1)$ & $2q^2$ & $2q^2$ & $2q(q-1)$ & $12$ & $48$ & $72$ & $126$ & $240$ \\
\hline
$4h^2-2h$ & $4q^2-2q$&$4q^2-2q$&$4q^2-2q$&$4q^2-2q$&$12$&$56$&$132$&$182$&$240$\\
\hline
\end{tabular}
$$
Now for any compact simple Lie algebra
$\mathfrak{h}$ of rank $h$, the number $|\Delta'|$ of all roots satisfies
$|\Delta'|\leqq 4h^2-2h$, with equality if and only if $k=1$ and
$\mathfrak{h}$ is isomorphic to $\mathfrak{a}_1$, $\mathfrak{g}_2$ or
$\mathfrak{e}_8$.
\smallskip

If $\mathfrak{h}$ is not simple, say
$\mathfrak{h}=\mathfrak{h}_1\oplus\cdots\oplus\mathfrak{h}_p\oplus\mathbb{R}^q,
$
let $h_i$ denote the rank, and $\Delta'_i$ the root system, of
$\mathfrak{h}_i$\,.   Then
\begin{equation}\label{9999}
|\Delta'| = \sum_{i=1}^p|\Delta'_i|\leqq \sum_{i=1}^p(4h_i^2-2h_i)
	\leqq 4(\sum_{i=1}^p h_i)^2-2\sum_{i=1}^p h_i\leqq 4h^2-2h.
\end{equation}
If $q > 0$ the last $\leqq$ in (\ref{9999}) is strict, and if $p > 1$ then
the second to last $\leqq$ in (\ref{9999}) is strict.  So in those cases
$|\Delta'| < 4h^2-2h$.  Now $\mathfrak{h}$ is simple.
\smallskip

Since $\mathfrak{h}$ is simple, it is isomorphic to $\mathfrak{a}_1$\,,
$\mathfrak{g}_2$ or $\mathfrak{e}_8$\,,  Here $\mathfrak{a}_1$
would imply $n = 2$, while we are assuming $n > 3$, so $\mathfrak{h}$ is
$\mathfrak{g}_2$ or $\mathfrak{e}_8$\,,  With some sign changes
$e_i\mapsto -e_i$ now every $e'_i - e'_{i+1}$ is a root of $\mathfrak{h}$.
As $\dim\widehat{\mathfrak{g}}_{\pm(e'_i-e'_{i+1})}=2k_{e'_i-e'_{i+1}}=2$,
i.e. $\mathfrak{h}_{\pm(e'_i-e'_{i+1})}=\mathfrak{g}_{\pm(e_i-e_{i+1})}$,
$$
e'_i - e'_{i+1} \in
  [\mathfrak{g}_{\pm(e_i-e_{i+1})},\mathfrak{g}_{\pm(e_i-e_{i+1})}]
  \subset\mathfrak{t}\cap\mathfrak{h} \mbox{ for } 1\leqq i<n.
$$
Thus $\dim(\mathfrak{t}\cap\mathfrak{h})\geqq n-1>h$. This is a contradiction.

Summarizing the above argument,
\begin{lemma}
Assume $\mathfrak{g}=\mathfrak{d}_n$ with $n>3$ and suppose that
$v=e_1\in\mathfrak{t}$ defines a CK vector field on the Riemannian normal
homogeneous space $G/H$. Denote $e'_i=
\mathrm{pr}_{\mathfrak{h}}(e_i)$ for $1\leqq i\leqq n$. Then there exist
$i< j$ such that both $e'_i\pm e'_j$ are roots of $\mathfrak{h}$.
\end{lemma}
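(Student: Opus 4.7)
The plan is to prove the lemma by contradiction: assume that for every pair $1 \leq i < j \leq n$ exactly one of $e'_i + e'_j$ and $e'_i - e'_j$ is a root of $\mathfrak{h}$, and extract a contradiction by combining Lemma~\ref{lemma-3-3} with the classification of compact simple Lie algebras.

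The first step is to show that the integer $k_{\alpha'} := \tfrac12 \dim\widehat{\mathfrak{g}}_{\pm\alpha'}$ is a single constant $k$ as $\alpha'$ ranges over the roots of $\mathfrak{h}$, and that $n = 2kh$ where $h = \dim(\mathfrak{t}\cap\mathfrak{h})$. Fix $\alpha'$ and decompose $\widehat{\mathfrak{g}}_{\pm\alpha'} = \sum_{i=1}^{k_{\alpha'}} \mathfrak{g}_{\pm\alpha_i}$ with $\alpha_i = \varepsilon_p e_p + \varepsilon_q e_q$. By the contradiction hypothesis, the ``conjugate'' root $\bar{\alpha}_i = \varepsilon_p e_p - \varepsilon_q e_q$ has $\mathrm{pr}_{\mathfrak{h}}(\bar{\alpha}_i)$ not a root of $\mathfrak{h}$, so $\mathfrak{g}_{\pm\bar{\alpha}_i}\subset\mathfrak{m}$. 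Set $\mathfrak{v} := \widehat{\mathfrak{g}}_{\pm\alpha'} + \sum_i \mathfrak{g}_{\pm\bar{\alpha}_i}$; then $\dim\mathfrak{v} = 4k_{\alpha'}$, and $\mathfrak{v}\cap\mathfrak{h} = \mathfrak{h}_{\pm\alpha'}$ has dimension $2$. The orbit $\mathrm{Ad}(G)v$ contains an orthogonal basis of $\mathfrak{v}$ made of standard skew matrix units (the same ``matrix unit'' construction used in Lemma~\ref{lemma-5-3}), so the CK identity applied along this basis gives $\dim(\mathfrak{v}\cap\mathfrak{h})/\dim\mathfrak{v} = \|\mathrm{pr}_\mathfrak{h}(v)\|^2/\|v\|^2 = h/n$ by Lemma~\ref{lemma-3-3}(1). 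Hence $n = 2k_{\alpha'}h$, and $k := k_{\alpha'}$ is independent of $\alpha'$.

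The second step is a root-count comparison. Using $n = 2kh$ together with $|\Delta| = 2n(n-1)$ and the fact that each $\mathfrak{h}$-root has a fixed $\widehat{\mathfrak{g}}_{\pm\alpha'}$ of dimension $2k$, one obtains $|\Delta'| \geq 4h^2 - 2h$ with equality forcing $k = 1$. Against this one checks from the standard root-system tables that every compact simple Lie algebra of rank $h$ satisfies $|\Delta'| \leq 4h^2 - 2h$, with equality only for $\mathfrak{a}_1$, $\mathfrak{g}_2$, $\mathfrak{e}_8$. A subadditivity estimate rules out a non-simple $\mathfrak{h}$: if $\mathfrak{h} = \mathfrak{h}_1 \oplus \cdots \oplus \mathfrak{h}_p \oplus \mathbb{R}^q$ then
$$|\Delta'| = \sum_{i=1}^p |\Delta'_i| \leq \sum_{i=1}^p (4h_i^2 - 2h_i) \leq 4\Bigl(\sum_{i=1}^p h_i\Bigr)^2 - 2\sum_{i=1}^p h_i \leq 4h^2 - 2h,$$
with strict inequality whenever $p > 1$ or $q > 0$. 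So $\mathfrak{h}$ must be simple, $k = 1$, and isomorphic to $\mathfrak{a}_1$, $\mathfrak{g}_2$, or $\mathfrak{e}_8$.

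Finally, each of these three options is eliminated. The case $\mathfrak{h}\cong\mathfrak{a}_1$ forces $n = 2h = 2$, contradicting $n > 3$. For $\mathfrak{h}\cong\mathfrak{g}_2$ or $\mathfrak{e}_8$, after sign changes $e_i \mapsto \pm e_i$ each $e'_i - e'_{i+1}$ becomes a root of $\mathfrak{h}$; since $k = 1$, $\mathfrak{h}_{\pm(e'_i - e'_{i+1})} = \mathfrak{g}_{\pm(e_i - e_{i+1})}$, so $e'_i - e'_{i+1} \in [\mathfrak{g}_{\pm(e_i - e_{i+1})}, \mathfrak{g}_{\pm(e_i - e_{i+1})}] \subset \mathfrak{t}\cap\mathfrak{h}$ for $1 \leq i < n$. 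These $n - 1$ linearly independent vectors lie in $\mathfrak{t}\cap\mathfrak{h}$, forcing $h \geq n - 1$, which contradicts $n = 2h$ (recall $h = 2$ or $8$). The main obstacle I expect is the first step: setting up the auxiliary subspace $\mathfrak{v}$ and verifying, uniformly in $\alpha'$, that $\mathrm{Ad}(G)v$ carries an orthogonal basis of $\mathfrak{v}$ so that the trace/CK identity produces the ratio $h/n$. Everything after that is combinatorics and case-checking.
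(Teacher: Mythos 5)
Your proposal is correct and takes essentially the same route as the paper's own proof: the same contradiction hypothesis (exactly one of $e'_i\pm e'_j$ a root for every pair), the same auxiliary subspace $\mathfrak{v}$ built from the conjugate root planes with the Lemma~\ref{lemma-5-3}-style trace/averaging identity giving $n=2k_{\alpha'}h$, the same root count $|\Delta'|\geqq 4h^2-2h$ checked against the classification table together with the subadditivity estimate ruling out non-simple $\mathfrak{h}$, and the same final elimination of $\mathfrak{a}_1$, $\mathfrak{g}_2$ and $\mathfrak{e}_8$ via $e'_i-e'_{i+1}\in\mathfrak{t}\cap\mathfrak{h}$ forcing $h\geqq n-1$. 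There are no substantive differences to report.
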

\smallskip

{\bf $\mathfrak{h}$ is simple.}
Third, we will prove that $\mathfrak{h}$ is a compact simple Lie algebra.
Suitably permuting $e'_i$ we can assume both $e'_1+e'_2$ and
$e'_1-e'_2$ are roots of $\mathfrak{h}$. They are orthogonal. For each $i>2$,
 either $e'_1+e'_i$ or $e'_1-e'_i$ is a root of $\mathfrak{h}$.
Suitably replace some $e_i$ by its negative; then we can assume $e'_1-e'_i$ is
a root of $\mathfrak{h}$. Then we have $c_i=\pm 1$ for $i>2$ such that
$\langle c_i e'_2,e'_1 - e'_i\rangle\geq 0$. Then
$\langle e'_1+c_i e'_2,e'_1-e'_i\rangle\geq \langle e'_1,e'_1- e'_i\rangle>0$,
in other words $e'_1-e'_i$ is a root for the same simple component of
$\mathfrak{h}$ as $e'_1+c_i e'_2$. The roots
$\{e'_1-e'_i \mid 1<i\leqq n\}\cup \{e'_1+e'_2\}$ of $\mathfrak{h}$ generate
$\mathfrak{t}\cap\mathfrak{h}$, so
$\mathfrak{h}$ is semi-simple.
If $\mathfrak{h}$ is not simple then the above argument shows
$\mathfrak{h}=\mathfrak{h}_1\oplus\mathfrak{h}_2$  where $e'_1-e'_2$ is
a root of $\mathfrak{h}_1$\,, $e'_1+e'_2$ is a root of $\mathfrak{h}_2$\,,
and $\mathfrak{h}_1$ and $\mathfrak{h}_2$ are simple.
\smallskip

Suppose $\mathfrak{h}=\mathfrak{h}_1\oplus\mathfrak{h}_2$ as just described.
Suppose that there are indices $i\ne j$, both $> 2$, $e'_1-e'_i$ is a root of
$\mathfrak{h}_1$, and $e'_1-e'_j$ is a root of $\mathfrak{h}_2$.
Because $e'_2+e'_i=(e'_1+e'_2)-(e'_1-e'_i)
\notin\mathfrak{t}\cap(\mathfrak{h}_1\cup\mathfrak{h}_2)$ is not a root
of $\mathfrak{h}$, $e'_2-e'_i\in\mathfrak{t}\cap\mathfrak{h}_1$
is a root of $\mathfrak{h}_1$. Similarly,
$e'_2+e'_j\in\mathfrak{t}\cap\mathfrak{h}_2$ is a root of $\mathfrak{h}_2$.
Then neither $e'_i+e'_j=(e'_i-e'_2)+(e'_2+e'_j)$
nor $e'_i-e'_j=(e'_i-e'_1)+(e'_1-e'_j)$
is contained in $\mathfrak{t}\cap(\mathfrak{h}_1 \cup \mathfrak{h}_2)$,
so they are not roots of $\mathfrak{h}$. That is a contradiction. So
all $e'_1-e'_i$ for $i>2$ are roots of the same $\mathfrak{h}_1$ or $\mathfrak{h}_2$.
\smallskip

Suitably choose $e_2$ from $\pm e_2$ so that $e'_1-e'_i$ is a root
of $\mathfrak{h}_1$ for $1 < i \leqq n$. It implies $\mathrm{rk}\mathfrak{h}_1=
\mathrm{rk}\mathfrak{h}-1$, and then
$\mathfrak{h}_2\cong\mathfrak{a}_1$ with the only roots $\pm (e'_1+e'_2)$.
There does not exist any root $e'_i+e'_j$ of $\mathfrak{h}$ for $2<i<j\leq n$,
because otherwise it is a root of $\mathfrak{h}_1$, and it implies
$\mathfrak{t}\cap\mathfrak{h}_1=\mathfrak{t}\cap\mathfrak{h}$ which is a contradiction.
As $e'_1 + e'_2 \perp e_1' - e'_i$, $e'_1+e'_2\in\mathfrak{h}$ is
orthogonal to $e_1-e_i=(e'_1-e'_i)+\mathrm{pr}_{\mathfrak{m}}(e_1-e_i)$ as well, for $1 < i \leqq n$.  That implies
$e'_1+e'_2\in\mathbb{R}(e_1+\cdots+e_n)$.
Let $G'$ be the analytic subgroup of $G$ with Lie algebra
$$
\mathfrak{g}'=\sum_{1\leq i<n}\mathbb{R}(e_i-e_{i+1})+
{\sum}_{1\leqq i<j\leqq n}\,\,\mathfrak{g}_{\pm(e_i-e_j)}.
$$
It is the standard $\mathfrak{su}(n-1)$ in $\mathfrak{so}(2n)$.
The identity component $H'$ of $G'\cap H$ has Lie algebra
$\mathfrak{h}'=\mathfrak{h}_1$.
The restriction of the bi-invariant inner product of $\mathfrak{g}$
to $\mathfrak{g}'$ defines a normal homogeneous metric on $G'/H'$.
For $(\mathfrak{g}',\mathfrak{h}')$, we have the decomposition
$$
\mathfrak{g}'=\mathfrak{t}\cap\mathfrak{g}'+
{\sum}_{\gamma\in\mathfrak{t}\,\,\cap\mathfrak{h}_1}
\widehat{\mathfrak{g}}'_{\pm\gamma},
$$
in which $\widehat{\mathfrak{g}}'_{\pm\gamma}$ coincides with
$\widehat{\mathfrak{g}}_{\pm\gamma}$ because
$\langle\gamma,e_1+\cdots+e_n\rangle=0$ whenever $\gamma$ is a root of
$\mathfrak{h}_1$.  The orthogonal decomposition
$\mathfrak{g}'=\mathfrak{h}'+\mathfrak{m}'$ is given by
$\mathfrak{g}'=(\mathfrak{g}'\cap\mathfrak{h})+
	(\mathfrak{g}'\cap\mathfrak{m})$.
Decompose $v=e_1$ as $v' + v_\mathfrak{c}$ where
$
v'=\tfrac{1}{n}((n-1)e_1-e_2-\cdots-e_n)\in\mathfrak{t}\cap\mathfrak{g}'
$
and
$
v_\mathfrak{c}=\tfrac{1}{n}(e_1+\cdots+e_n) \mbox{ with }
[v_{\mathfrak{c}},\mathfrak{g}']=0
$ and $\langle v_\mathfrak{c},\mathfrak{g}'\rangle=0$.
So $v'$ defines a CK vector field on the Riemannian normal homogeneous space
$G'/H'$.
\smallskip

By Proposition \ref{prop-5-3}, $n$ is even, say $n=2k$ with $k\geq 2$,
and conjugation by an element of $\Ad(G')$ carries
$\mathfrak{h}'$ to the standard $\mathfrak{sp}(k)$ in $\mathfrak{su}(n)$.
So $\mathfrak{h}\cong\mathfrak{c}_k\oplus\mathfrak{a}_1$.
Since $\mathrm{Ad}(G)(v)$ contains an orthonormal
basis for $\mathfrak{g}$ as well as an orthonormal basis for $\mathfrak{t}$,
Lemma \ref{lemma-3-3} implies
$$
2n-1
=\frac{\dim\mathfrak{g}}{\dim\mathfrak{t}}
=\frac{\dim\mathfrak{h}}{\dim(\mathfrak{t}\cap\mathfrak{h})}
=\frac{2k^2+k+3}{k+1}\,, \mbox{ not an integer unless } k = 3.
$$
But if $k = 3$ then $2n-1 = 5$ while $\frac{2k^2+k+3}{k+1} = 6$.  So in
any case this is a contradiction. Summarizing the above arguments, we have
proved

\begin{lemma}
Assume $\mathfrak{g}=\mathfrak{d}_n$ with $n>3$ and suppose that
$v=e_1\in\mathfrak{t}$ defines a CK vector field on the Riemannian normal
homogeneous space $G/H$. Then the Lie algebra $\mathfrak{h}$ is simple.
\end{lemma}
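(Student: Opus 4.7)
The plan is a two-step argument: first show $\mathfrak{h}$ is semisimple by exhibiting enough roots to span $\mathfrak{t}\cap\mathfrak{h}$, then rule out a nontrivial direct sum decomposition via structural constraints on the projected vectors $e'_i$.

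For semisimplicity, I would exploit the previous lemma which provides indices $i<j$ with both $e'_i\pm e'_j$ roots of $\mathfrak{h}$; after a Weyl group action take $i=1$, $j=2$. For every $k>2$, either $e'_1+e'_k$ or $e'_1-e'_k$ is a root of $\mathfrak{h}$; replacing $e_k$ by $-e_k$ where needed, I arrange that $e'_1-e'_k$ is a root for all $k>1$. These $n-1$ roots together with $e'_1+e'_2$ clearly span $\mathfrak{t}\cap\mathfrak{h}$, so $\mathfrak{h}$ has no center and is semisimple.

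Now assume for contradiction $\mathfrak{h}=\mathfrak{h}_1\oplus\mathfrak{h}_2\oplus\cdots$ is not simple. Since $e'_1+e'_2\perp e'_1-e'_2$ and both are roots, they must belong to different simple factors; call them $\mathfrak{h}_2$ and $\mathfrak{h}_1$. The key combinatorial step is to show all roots $e'_1-e'_i$ ($i>1$) lie in the same factor as $e'_1-e'_2$, i.e.\ in $\mathfrak{h}_1$. For if $e'_1-e'_i\in\mathfrak{h}_1$ and $e'_1-e'_j\in\mathfrak{h}_2$ for some distinct $i,j>2$, then $e'_2+e'_i=(e'_1+e'_2)-(e'_1-e'_i)$ is a sum of a root of $\mathfrak{h}_2$ and a root of $\mathfrak{h}_1$, so it cannot be a root of $\mathfrak{h}$; likewise for $e'_2-e'_i$, $e'_2\pm e'_j$, and ultimately for both $e'_i\pm e'_j$, contradicting the earlier dichotomy. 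Hence all $e'_1-e'_i$ sit in $\mathfrak{h}_1$, forcing $\mathrm{rk}\,\mathfrak{h}_1=\mathrm{rk}\,\mathfrak{h}-1$ and $\mathfrak{h}_2\cong\mathfrak{a}_1$ with unique pair of roots $\pm(e'_1+e'_2)$; in particular $e'_1+e'_2\in\mathbb{R}(e_1+\cdots+e_n)$.

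The final blow comes by reduction to the $\mathfrak{a}_{n-1}$ case already handled. Let $G'\subset G$ be the standard $\mathrm{SU}(n)\subset\mathrm{SO}(2n)$, whose root planes are exactly the $\mathfrak{g}_{\pm(e_i-e_j)}$, and let $H'$ be the identity component of $G'\cap H$, so $\mathfrak{h}'=\mathfrak{h}_1$. Writing $v=e_1=v'+v_{\mathfrak{c}}$ with $v'=\tfrac{1}{n}((n-1)e_1-e_2-\cdots-e_n)\in\mathfrak{g}'$ and $v_{\mathfrak{c}}=\tfrac{1}{n}(e_1+\cdots+e_n)$ satisfying $\langle v_{\mathfrak{c}},\mathfrak{g}'\rangle=0$ and $[v_{\mathfrak{c}},\mathfrak{g}']=0$, Lemma~3.2 transfers the CK property to $G'/H'$. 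Proposition~5.3 then forces $n=2k$ and $\mathfrak{h}_1\cong\mathfrak{sp}(k)$, so $\mathfrak{h}\cong\mathfrak{c}_k\oplus\mathfrak{a}_1$. The contradiction is dimensional: since $\mathrm{Ad}(G)v$ contains orthonormal bases for both $\mathfrak{g}$ and $\mathfrak{t}$, Lemma~3.3(2) gives
$$
2n-1=\frac{\dim\mathfrak{g}}{\dim\mathfrak{t}}=\frac{\dim\mathfrak{h}}{\dim(\mathfrak{t}\cap\mathfrak{h})}=\frac{2k^2+k+3}{k+1},
$$
which is never an integer except possibly at $k=3$, where the two sides evaluate to $5$ and $6$ respectively.

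The hard part will be the combinatorial argument that pins all $e'_1-e'_i$ into a single simple factor; once that is in place the reduction to $\mathrm{SU}(n)$ and the closing dimension count proceed mechanically from the toolkit already developed.
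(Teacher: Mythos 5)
Your route is the paper's route: semisimplicity from the spanning set $\{e'_1-e'_i\}\cup\{e'_1+e'_2\}$, forcing all $e'_1-e'_i$ into one simple factor so that $\mathfrak{h}\cong\mathfrak{h}_1\oplus\mathfrak{a}_1$ with $e'_1+e'_2\in\mathbb{R}(e_1+\cdots+e_n)$, reduction to the standard $\mathfrak{su}(n)\subset\mathfrak{so}(2n)$ via Lemma 3.2, then Proposition 5.3 and the Lemma 3.3 count. However, there is a genuine gap where you launch the non-simple case: the assertion that ``$e'_1+e'_2\perp e'_1-e'_2$ and both are roots, so they must belong to different simple factors'' is false as a principle --- orthogonal roots routinely live in one simple algebra (e.g.\ $e_1-e_2$ and $e_3-e_4$ in $\mathfrak{su}(4)$, or $e_1\pm e_2$ in $\mathfrak{so}(2m)$ itself). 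What is actually needed, and what the paper supplies, is the observation that for each $i>2$ one can pick $c_i=\pm1$ with $\langle e'_1+c_ie'_2,\,e'_1-e'_i\rangle\geq\langle e'_1,e'_1-e'_i\rangle>0$, so each root $e'_1-e'_i$ is non-orthogonal to one of $e'_1\pm e'_2$ and hence lies in the same simple ideal as that root; combined with the spanning statement this shows $\mathfrak{h}$ has at most two simple ideals, and that in the non-simple case $e'_1-e'_2$ and $e'_1+e'_2$ necessarily sit in the two different ones. Without this, your ``key combinatorial step'' is not well-posed: a priori some $e'_1-e'_i$ could lie in a third ideal, or both $e'_1\pm e'_2$ could lie in the same one, and the dichotomy you argue between $\mathfrak{h}_1$ and $\mathfrak{h}_2$ does not cover those configurations.

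Once that piece is inserted, your combinatorial step works, but note the details: $e'_2-e'_i$ and $e'_2+e'_j$ \emph{are} roots (of $\mathfrak{h}_1$ and $\mathfrak{h}_2$ respectively, since they lie in the corresponding Cartans and the dichotomy forces one of each pair to be a root); it is only the two combinations $e'_i\pm e'_j$ that lie in neither $\mathfrak{t}\cap\mathfrak{h}_1$ nor $\mathfrak{t}\cap\mathfrak{h}_2$, and that is the contradiction --- so ``likewise for $e'_2-e'_i$'' is a misstatement, though harmless to the conclusion. Two further small elisions: the claims $e'_1+e'_2\in\mathbb{R}(e_1+\cdots+e_n)$ and $\mathfrak{h}'=\mathfrak{h}_1$ each need a short argument (orthogonality of $e'_1+e'_2$ to every $e_1-e_i$, and the fact that no root of $\mathfrak{h}_1$ is of the form $e'_p+e'_q$, so its root planes lie in the $\mathfrak{g}_{\pm(e_i-e_j)}$'s), as in the paper. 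Finally, in the closing count, with $n=2k$ one has $2n-1=11$ at $k=3$ (not $5$), but the mismatch with $\frac{2k^2+k+3}{k+1}=6$ still gives the contradiction for every $k\geq 2$.
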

\smallskip

Since $\mathfrak{h}$ is simple, the ratio
$r=\frac{||e'_1+e'_2||}{ ||e'_1-e'_2||}$ can only be $(\sqrt{3})^{\pm 1}$,
$(\sqrt{2})^{\pm 1}$ or $1$.  The $e'_i$ cannot all be mutually
orthogonal because that would imply $\mathfrak{t}\subset \mathfrak{h}$, a
contradiction.  If $r \ne 1$, so $\mathfrak{h}$ has two root lengths
and must be $\mathfrak{b}_h$ or $\mathfrak{c}_h$ with 
$h=\dim(\mathfrak{t}\cap\mathfrak{h})$, or $\mathfrak{f}_4$ with $h=4$, or
$\mathfrak{g}_2$ with $h=2$.  If $r = 1$ and we have $e'_i$ and $e'_j$, such
that $i\neq j$ and $\langle e'_i,e'_j\rangle\neq 0$, either $e'_i+e'_j$ or
$e'_i-e'_j$ is a root whose length is different from that of $e'_1\pm e'_2$.
So in that case also $\mathfrak{h}$ is isomorphic to $\mathfrak{b}_h$,
$\mathfrak{c}_h$, $\mathfrak{f}_4$ or $\mathfrak{g}_2$.
\smallskip

As seen earlier, $\mathrm{Ad}(G)(v)$ contains orthonormal bases of
$\mathfrak{t}$ and $\mathfrak{g}$. By Lemma \ref{lemma-3-3},
\begin{equation}\label{0050}
2n-1=\frac{\dim\mathfrak{g}}{\dim\mathfrak{t}}=
\frac{\dim(\mathfrak{h})}{\dim(\mathfrak{t}\cap\mathfrak{h})}.
\end{equation}

{\bf The $\mathfrak{g}_2$ case.}
When $\mathfrak{h}$ is isomorphic to $\mathfrak{g}_2$, the right side of
(\ref{0050}) is $7$, so $n=4$. In this case $e'_i$ and $e'_j$ must have
an angle $\frac{\pi}3$ or $\frac{2\pi}3$, when $i\neq j$.
The only possible choices for all $\pm e'_i$ are $\pm e'_1$, $\pm e'_2$
and the shorter pair among $\pm e'_1\pm e'_2$. One can't have
four different pairs of $\pm e'_i$. This is a contradiction.
\smallskip

{\bf The $\mathfrak{b}_h$ and $\mathfrak{c}_h$ cases.}
When $\mathfrak{h}$ is isomorphic to $\mathfrak{b}_h$ or $\mathfrak{c}_h$,
the right side of (\ref{0050}) is $2h+1$, so $n=h+1$. Note that
$\mathfrak{h}\cap\mathfrak{t}$ is a hyperplane in $\mathfrak{t}$, so
the complement has $2$ components.  With suitable sign changes
$e_i \mapsto -e_i$ we may suppose that
all $e_i$ belong to the same component and they all have the
same projection to $\mathfrak{m}$. So $e_i-e_j\in\mathfrak{h}$ for $i<j$ and
$\mathfrak{m}\cap\mathfrak{t}=\mathbb{R}(e_1+\cdots+e_n)$.
\smallskip

Since $e'_1-e'_2=e_1-e_2$ is a long root of $\mathfrak{h}$, with length
$\sqrt{2}$, $e'_1+e'_2$ can have length $1$ only when $n=4$. In that case
$\pm(e'_1+e'_2)=\mp(e'_3+e'_4)$, $\pm(e'_1+e_3)=\mp(e'_2+e'_4)$ and
$\pm(e'_1+e'_4)=\mp(e'_2+e'_3)$ are the only three possible pairs of short
roots that are orthogonal to each other, so $\mathfrak{h}$ is isomorphic
to $\mathfrak{b}_3$\,, where the root system contains all above shorts roots
and all long roots of the form $\pm(e_i-e_j)$. Up to $\mathrm{Ad}(G)$
conjugacy now $\mathfrak{h}$ is uniquely determined, and it satisfies the
symmetric space condition $[\mathfrak{m},\mathfrak{m}]\subset\mathfrak{h}$.
By a suitable outer automorphism of $\mathfrak{d}_4$, it can be changed back
to the standard $\mathfrak{so}(7)$ inside $\mathfrak{g}=\mathfrak{so}(8)$,
and so $G/H$ is a locally Riemannian symmetric space,
covered by the round $S^7 =
\mathrm{Spin}(8)/\mathrm{Spin}(7)=\mathrm{SO}(8)/\mathrm{SO}(7)$.
At the same time,
the automorphism changes $v=e_1$ to a vector of the form
$\pm e_1\pm\cdots\pm e_4$, which is well known to define a CK vector field
on $S^7$.
\smallskip

{\bf The $\mathfrak{f}_4$ case.}
For the rest of this section, $\mathfrak{h}$ is isomorphic to
$\mathfrak{f}_4$. Then the right side of (\ref{0050}) is $13$, so
$n=7$. For simplicity, we rescale the inner product of $\mathfrak{g}$,
so that $||e'_i||=1$ for $1\leqq i\leqq n$. Then
$\langle e'_1, e'_2\rangle$ must be $0$ or $\pm\frac13$.
\smallskip

Our next step is to prove the case that
$e'_1\pm e'_2$ are roots of $\mathfrak{h}$ with
$\langle e'_1,e'_2\rangle=\pm\frac13$ is impossible. Assume conversely it happens,
then one of $e'_1\pm e'_2$ is short
with length $\frac{2}{\sqrt{3}}$ and the other is long with
length $\frac{2\sqrt{2}}{\sqrt{3}}$. For any indices $i\neq j$, either
$e'_i+e'_j$ or $e'_i-e'_j$ is a root of $\mathfrak{h}$, with length
$\frac{2}{\sqrt{3}}$ or $\frac{2\sqrt{2}}{\sqrt{3}}$. So
$\langle e_i,e_j\rangle=\pm\frac13$ for $1\leqq i<j\leqq n$.
\smallskip

We make appropriate sign changes
$e_i\mapsto -e_i$ so that $\langle e'_1,e'_i\rangle=-\frac13$ for all $i>1$.
So if $e'_1-e'_i$ is a root of $\mathfrak{h}$, it is a long root, and
if $e'_1+e'_i$ is a root of $\mathfrak{h}$, it is a short root.
\smallskip

For any $i>2$, if $e'_1+e'_i$ is a short root of $\mathfrak{h}$,
then $e'_1+e'_i$ has an angle $\frac{\pi}{4}$ with the long root $e'_1-e'_2$,
because
$
\langle e'_1+e'_i,e'_1-e'_2\rangle=1-\langle e'_i,e'_2\rangle>0.
$
So
$$
\langle e'_1+e'_i,e'_1-e'_2\rangle= |e'_1 + e'_i|\cdot|e'_1 - e'_2|
	\cdot \cos\tfrac{\pi}{4} = \tfrac{2}{\sqrt{2}}\cdot
	\tfrac{2\sqrt{2}}{\sqrt{3}}\cdot\tfrac{\sqrt{2}}{2} = \tfrac43\,.
$$
That implies
$\langle e'_2,e'_i\rangle=-\frac13$.
If $e'_1-e'_i$ is a long root of $\mathfrak{h}$,
then $e'_1-e'_i$ has an angle $\frac{\pi}{4}$ with the short root $e'_1+e'_2$,
because
$\langle e'_1-e'_i,e'_1+e'_2\rangle=1-\langle e'_i,e'_2\rangle>0$.
Arguing as above, $\langle e'_2,e'_i\rangle=-\frac13$.
\smallskip

Assume $i\neq j$ with $\{i,j\} \subset \{3,\ldots,n\}$.
If $e'_1-e'_i$ and $e'_1-e'_j$ are both long roots of $\mathfrak{h}$,
they must have an angle $\frac{\pi}{3}$ because
$
\langle e'_1-e'_i,e'_1-e'_j\rangle=\frac53+\langle e'_i,e'_j\rangle>0.
$
So $\langle e'_1-e'_i,e'_1-e'_j\rangle=\frac43$, which implies
$\langle e'_i,e'_j\rangle=-\frac13$.
If $e'_1-e'_i$ is a long root and $e'_1+e'_j$ is short they must have
an angle $\frac{\pi}{4}$ because
$
\langle e'_1-e'_i,e'_1+e'_j\rangle=1-\langle e'_i,e'_j\rangle>0.
$
So $\langle e'_1-e'_i,e'_1+e'_j\rangle=\frac43$, which implies
$\langle e'_i,e'_j\rangle=-\frac13$.
\smallskip

Based on the above observations, we see if there is $e'_i$, $i>2$, such that
$e'_1-e'_i$ is a long root of $\mathfrak{h}$, we can suitably permute $e_j$\,,
$j>2$ to make $i=3$. Then the matrix
$(\langle e'_p,e'_q\rangle)_{1\leqq p,q\leqq 5}$ must be of the form
$$\left(
    \begin{array}{ccccc}
      1 & -\tfrac13 & -\tfrac13 & -\tfrac13 & -\tfrac13 \\
      -\tfrac13 & 1 & -\tfrac13 & -\tfrac13 & -\tfrac13 \\
      -\tfrac13 & -\tfrac13 & 1 & -\tfrac13 & -\tfrac13 \\
      -\tfrac13 & -\tfrac13 & -\tfrac13 & 1 & \tfrac13 \\
      -\tfrac13 & -\tfrac13 & -\tfrac13 & \tfrac13 & 1 \\
    \end{array}
  \right) \mbox{ or }
\left(
    \begin{array}{ccccc}
      1 & -\tfrac13 & -\tfrac13 & -\tfrac13 & -\tfrac13 \\
      -\tfrac13 & 1 & -\tfrac13 & -\tfrac13 & -\tfrac13 \\
      -\tfrac13 & -\tfrac13 & 1 & -\tfrac13 & -\tfrac13 \\
      -\tfrac13 & -\tfrac13 & -\tfrac13 & 1 & -\tfrac13 \\
      -\tfrac13 & -\tfrac13 & -\tfrac13 & -\tfrac13 & 1 \\
    \end{array}
  \right),
$$
which is non-singular in either case.  That contradicts
$\dim(\mathfrak{t}\cap\mathfrak{h})=4$. So $\mathfrak{h}$ has no long root
of the form $e'_1-e'_i$, $i>2$. Furthermore, $\mathfrak{h}$ has no
long root of the form $e'_2-e'_i$, $i>2$; for if
$e'_2-e'_i$ is a long root with $i>2$, then it has an angle
$\tfrac{2\pi}3$ with
the long root $e'_1-e'_2$ of $\mathfrak{h}$, i.e. $e'_1-e'_i=
(e'_1-e'_2)+(e'_2-e'_i)$ is a long root of $\mathfrak{h}$, which contradicts
our previous statement. The number of long roots
with the form $\pm e'_i\pm e'_j$ is at most $22$, consisting of
$\pm(e'_1-e'_2)$, and at most a pair of long roots
from each set $\{\pm e'_i\pm e'_j\}$ for $2<i<j\leqq 7$. The number of long
roots of $\mathfrak{h}$ can not reach $24$, which is a contradiction.

This completes the proof, in the $\mathfrak{f}_4$ case, that
we do not have roots $e'_1\pm e'_2$ of $\mathfrak{h}$ with
$\langle e'_1,e'_2\rangle=\pm\frac13$.
\smallskip

Next, in the $\mathfrak{f}_4$ case, we consider the  situation where
$\langle e'_1,e'_2\rangle=0$ and $e'_1\pm e'_2$ are roots of $\mathfrak{h}$.
Since $\mathfrak{h}$ has no root of length $2$, any short root
$\pm e'_i\pm e'_j$ has the length 1 with $\langle e'_i,e'_j\rangle=\pm\frac12$,
and any long root $\pm e'_i\pm e'_j$ has the length $\sqrt{2}$ with
$\langle e'_i,e'_j\rangle=0$. If $i$, $j$ and $k$ are distinct,
$\langle e'_i,e'_j\rangle=0$, and $\langle e'_i,e'_k\rangle=0$, then for
suitable $c_1=\pm 1$ and $c_2=\pm 1$ the roots $e'_i+c_1 e'_j$ and
$e'_i+c_2 e'_k$ of $\mathfrak{h}$ are long.  Because
$$
\langle e'_i+c_1 e'_j,e'_i+c_2 e'_j\rangle=1\pm\langle e'_i,e'_j \rangle>0,
$$
the combination $c_1 e'_j-c_2 e'_k=(e'_i+c_1 e'_j)-(e'_i+c_2 e'_j)$ is a long
root of $\mathfrak{h}$.  That implies $\langle e'_j,e'_k\rangle=0$.
Now $\{1,\ldots,7\}$ is a disjoint union $\coprod_{a\in\mathcal{A}}S_a$
such that (i) if $i\neq j$ in the same $S_a$ then $e'_i\perp e'_j$
and (ii) if $i\in S_a$ and $j\in S_b$ with $a\neq b$ then
$\langle e_i,e_j\rangle=\pm\frac12$. If $i\neq j$ are in the same
$S_a$\,, and $e_i\pm e_j$ are both long roots of $\mathfrak{h}$, then for
whenever $k\in S_a$, $i\neq k\neq j$,
there is a long root of the form $e_i\pm e_k$\,. It has
angle $\frac{\pi}3$ with both $e_i + e_j$ and $e_i -e_j$\,, so both
$e_k\pm e_j$ are long roots of $\mathfrak{h}$. Similarly, both $e_i\pm e_k$
are long roots of $\mathfrak{h}$. Extending this argument, whenever $k\neq l$
in the same $S_a$, both $e_k\pm e_l$ are long roots of $\mathfrak{h}$.
\smallskip

Each $|S_a| \leqq 4$.  For if  $S_a$ contains five elements then
$\dim\mathfrak{t}\cap\mathfrak{h}>4$ which is impossible.
\smallskip

Suppose $|S_a| = 4$ with $\{1,2\} \subset S_a$\,.
We may permute the $e_i$ so that $S_a=\{1,2,3,4\}$. Then $\{e'_1,\dots,e'_4\}$
is an orthonormal basis of $\mathfrak{t}\cap\mathfrak{h}$. By our previous
observation, $\pm e'_i\pm e'_j$ provide all long roots of $\mathfrak{h}$.
From the standard presentation (\ref{root-system-F-4}) of $\mathfrak{f}_4$,
we can see, for any orthogonal pair of long roots $\alpha'$ and
$\beta'$ of $\mathfrak{h}$, $\frac12(\alpha'\pm\beta')$ are short roots of
$\mathfrak{h}$.  Thus the
$\pm e'_i=\pm\frac12((e'_i+e'_j)+(e'_i-e'_j))$, $1\leqq i\leqq 4$ and $i\neq j$,
are short roots of $\mathfrak{h}$.
But $e'_i=\frac12(\pm e'_1\pm\cdots\pm e'_4)$ for $i=5$, $6$
and $7$, so any short root $\pm e'_i\pm e'_j$ of $\mathfrak{h}$\,, for $1\leqq i\leqq 4<j\leqq 7$,
is a vector of the form $\frac12(\pm e_1\pm\cdots\pm e_4)$. And each set
$\{\pm e'_j\pm e'_k\}$, $4<j<k\leqq 7$, contains at most one pair of short
roots, resulting in 3 pairs in total.  That is not enough for the
presentation just above for all the $\pm e'_i$, $1\leqq i\leqq 4$.
There is at least a short root $e'_i$ of $\mathfrak{h}$, $1\leq i\leq 4$,
which can not be given as any $\pm e'_j\pm e'_k$.
This is a contradiction to our observation for the root system of $\mathfrak{h}$.
So there is no $S_a$ with  $|S_a| = 4$ and
$\{1,2\} \subset S_a$\,.
\smallskip

Assume that one of the sets $S_b$ satisfies $|S_b| = 4$ and it does not contain $1$ and $2$\,.  Then we can permute the $e_i$ so that
$S_b=\{4,5,6,7\}$. From argument above, if $4<i<j\leqq 7$, then either
$e'_i+e'_j$ or $e'_i-e'_j$ is not a root of $\mathfrak{h}$. So
$S_b$ can at most provide 6 pairs of long roots of $\mathfrak{h}$.
The only way that $\mathfrak{h}$ can have 12 pairs of long roots is that
$1$, $2$ and $3$ must belong to the same $S_a$\,, and $\pm e'_i\pm e'_j$
are long roots of $\mathfrak{h}$ for $1\leqq i<j\leqq 3$.  In that case
we look at the short roots.  Each set $\{\pm e'_i\pm e'_j\}$,
$1\leqq i\leqq 4<j\leqq 7$, can only provide one pair of short roots, and in
fact it must provide one pair to make the 12 pairs of short roots of
$\mathfrak{h} = \mathfrak{f}_4$\,. Now, if $\alpha'$ is a root
of $\mathfrak{h}$, then $\mathfrak{h}_{\pm\alpha'}=
\widehat{\mathfrak{g}}_{\pm\alpha'}$ is a root plane of $\mathfrak{g}$,
say $\mathfrak{h}_{\pm\alpha'} = \mathfrak{g}_{\pm\alpha}$\,.  Then
$\alpha'$ is also a root of $\mathfrak{g}$ because
$$
\mathbb{R}\alpha'=[\mathfrak{h}_{\pm\alpha'},
\mathfrak{h}_{\pm\alpha'}]=[\mathfrak{g}_{\pm\alpha},
\mathfrak{g}_{\pm\alpha}]=\mathbb{R}\alpha,
$$
Then the root system of $\mathfrak{h}$ is a subset of the root system of
$\mathfrak{g}$.  That is impossible because all the roots of $\mathfrak{g}$
have the same length.  This completes the argument that none of the
$S_a$ can contain more than 3 elements.
\smallskip

Since it has at most 3 elements, each $S_a$ can contribute no more than
12 long roots. For $\mathfrak{h}$ to have 24 long roots, $\{1,\ldots,7\}
= \coprod_{a\in\mathcal{A}}S_a$ is the union of three subsets, two with
three elements each, one with one element. Suitably permuting the $e_i$ we
can assume $\mathcal{A}=\{a,b,c\}$, $S_a=\{1,2,3\}$, $S_b=\{4,5,6\}$,
and $S_c=\{7\}$. All $\pm e_i\pm e_j$ must be long roots of $\mathfrak{h}$
for $1\leqq i<j\leqq 3$ or $4\leqq i<j\leqq 6$; they give all 24 long
roots of $\mathfrak{h}$.
\smallskip

From above argument, $\pm e'_i$, $1\leqq i<7$, are short roots of $\mathfrak{h}$, because
they can be presented as $\frac12(\pm\alpha'\pm\beta')$ for an orthogonal pair of long roots $\alpha'$ and $\beta'$ of $\mathfrak{h}$. So for $1\leqq i<7$, we can find $j$ and $k$, such that
$e'_i=\pm e'_j\pm e'_k$. It is easy to see, if $i<4$, then $j>3$ and $k>3$,
further more, $j$ and $k$ can not both be chosen from $\{4,5,6\}$, so one of them is from $\{4,5,6\}$, which must be different for different $e'_i$s, and
the other is just $7$. So suitably substitute some $e'_i$s by $-e'_i$s, we
can have $$e'_1+e'_4+e'_7=0,e'_2+e'_5+e'_7=0,\mbox{ and }e'_3+e'_6+e'_7=0,$$
i.e. $\mathfrak{m}$ is linearly spanned by $e_1+e_4+e_7$, $e_2+e_5+e_7$
and $e_3+e_6+e_7$. Direct calculation shows, for $v=e_1$ and $v'=e_7$ in the $\mathrm{Ad}(G)$-orbit of $v$,
$$||\mathrm{pr}_{\mathfrak{m}}(v)||<||\mathrm{pr}_{\mathfrak{m}}(v')||,
$$
which is a contradiction.
\smallskip

To summarize, we have the following proposition.
\begin{proposition}\label{prop-dn}
Suppose that $G$ is a compact connected simple Lie group with
$\mathfrak{g}=\mathrm{Lie}(G)=\mathfrak{d}_n$ where $n>1$, $H$ is a closed
subgroup with $0<\dim H<\dim G$, and $G/H$ is a Riemannian normal
homogeneous space. Assume there is a nonzero vector $v\in\mathfrak{g}$ which
defines a CK vector field on $G/H$. Then $G/H$ is a locally Riemannian
symmetric space which is covered by the sphere
$S^{2n-1}=\mathrm{Spin}(2n)/\mathrm{Spin}(2n-1)=
\mathrm{SO}(2n)/\mathrm{SO}(2n-1)$.
\end{proposition}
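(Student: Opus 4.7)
The plan is to invoke Proposition \ref{prop-3-2} to reduce, up to the Weyl group action, to two cases: $v=e_1+\cdots+e_n$ or $v=e_1$. The case $n=4$ deserves a separate remark: the triality outer automorphism of $\mathfrak{d}_4$ interchanges these two vectors, so only one of them needs to be treated in that rank. For $n>4$ I will handle both cases in parallel with arguments modeled on those for $\mathfrak{b}_n$, and in the end both will funnel into a locally symmetric sphere quotient.

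For $v=e_1+\cdots+e_n$ (with $n>4$), I first apply Lemma \ref{lemma-3-1} to each orthogonal pair $\pm e_i\pm e_j,\pm e_k\pm e_l$ with $i,j,k,l$ distinct. Taking linear combinations of the resulting orthogonality relations and varying $k,l$, I conclude that $\mathrm{pr}_\mathfrak{h}(e_i)$ lies in $\bigcap_{j\ne i}(\mathbb{R}e_i+\mathbb{R}e_j)=\mathbb{R}e_i$, so each $e_i$ belongs to $\mathfrak{h}$ or $\mathfrak{m}$. To pin down $\dim(\mathfrak{t}\cap\mathfrak{m})=1$, I use Lemma \ref{trivial-lemma} to pass to the centralizer of $\mathfrak{t}\cap\mathfrak{h}\setminus\mathbb{R}e_1$, reducing to a Riemannian normal homogeneous $\mathrm{Spin}(2m+2)/H''$ where $\mathfrak{h}''$ is either abelian or a standard $\mathfrak{so}(3)$. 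In either sub-case I exhibit an explicit element in $\mathrm{Ad}(G'')v''$ whose projection to $\mathfrak{h}''$ vanishes while $\mathrm{pr}_{\mathfrak{h}''}(v'')\ne 0$, contradicting the CK property unless $m=1$. With $\dim(\mathfrak{t}\cap\mathfrak{m})=1$ in hand, I then examine orbit elements inside each root plane $\mathfrak{g}_{\pm(e_i\pm e_j)}$ and $\mathfrak{g}_{\pm e_i}$ (the latter using the enlargement $\widehat{\mathfrak{g}}_{\pm e_i}$) and use the constancy of $\|\mathrm{pr}_\mathfrak{h}(\cdot)\|$ to force every such root plane into $\mathfrak{h}$. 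This forces $\mathfrak{h}$ to be the standard $\mathfrak{so}(2n-1)$, whence $G/H$ is covered by $S^{2n-1}=\mathrm{SO}(2n)/\mathrm{SO}(2n-1)$, and the symmetric-space conclusion $[\mathfrak{m},\mathfrak{m}]\subset\mathfrak{h}$ is either immediate or proved by the same bilinear argument used at the end of Section 5.

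For $v=e_1$, set $e'_i=\mathrm{pr}_\mathfrak{h}(e_i)$; they have common length. An orbit element in $\mathfrak{g}_{\pm(e_i+e_j)}+\mathfrak{g}_{\pm(e_i-e_j)}$ shows at least one of $e'_i\pm e'_j$ is a root of $\mathfrak{h}$. I then prove three structural lemmas in sequence. First, $\{\pm e'_i\}$ has exactly $2n$ elements: otherwise some $2e'_i$ is a long root forcing $\mathfrak{h}\cong\mathfrak{c}_m$, and a reduction to the $\mathfrak{sp}(2)$ subalgebra sitting over a coincidence class yields an explicit contradiction by $\mathrm{SO}(k)$-diagonalising the generators and conjugating $v$ to land entirely in $\mathfrak{m}$. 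Second, there exist $i\ne j$ with both $e'_i\pm e'_j$ roots of $\mathfrak{h}$: otherwise applying Lemma \ref{lemma-3-3} gives $n=2k_{\alpha'}h$ for every root $\alpha'$ of $\mathfrak{h}$, leading to the root-count inequality $|\Delta'|\ge 4h^2-2h$ which only $\mathfrak{a}_1,\mathfrak{g}_2,\mathfrak{e}_8$ can satisfy; a chain of elementary reductions kills these. Third, $\mathfrak{h}$ is simple: if $\mathfrak{h}=\mathfrak{h}_1\oplus\mathfrak{h}_2$ with $e'_1-e'_2\in\mathfrak{h}_1$, $e'_1+e'_2\in\mathfrak{h}_2$, then all long roots involving $e'_1$ lie in the same simple factor; this forces $\mathfrak{h}_2\cong\mathfrak{a}_1$, and applying Proposition \ref{prop-5-3} to the standard $\mathfrak{su}(n-1)\subset\mathfrak{so}(2n)$ yields $\mathfrak{h}\cong\mathfrak{c}_k\oplus\mathfrak{a}_1$, whose dimension fails the integrality check from Lemma \ref{lemma-3-3}.

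With $\mathfrak{h}$ simple, its root system has at most two lengths, so it is $\mathfrak{b}_h$, $\mathfrak{c}_h$, $\mathfrak{f}_4$, or $\mathfrak{g}_2$. The identity $2n-1=\dim\mathfrak{h}/\dim(\mathfrak{t}\cap\mathfrak{h})$ from Lemma \ref{lemma-3-3} immediately eliminates $\mathfrak{g}_2$ (forces $n=4$ but only three distinct $\pm e'_i$ pairs fit) and forces $n=h+1$ in the $\mathfrak{b}_h/\mathfrak{c}_h$ cases; after sign normalisation, length comparison collapses this to $n=4$, $\mathfrak{h}=\mathfrak{b}_3$, which via triality is the standard $\mathfrak{so}(7)\subset\mathfrak{so}(8)$ yielding the symmetric $S^7$. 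The main obstacle, and the step that demands the most care, is the elimination of $\mathfrak{h}\cong\mathfrak{f}_4$: here $n=7$, and I must split into the subcase $\langle e'_1,e'_2\rangle=\pm\tfrac13$ (ruled out by computing the $5\times 5$ Gram matrix of the $e'_i$ and observing it is non-singular, contradicting $\dim(\mathfrak{t}\cap\mathfrak{h})=4$) and the subcase $\langle e'_1,e'_2\rangle=0$ (where I partition $\{1,\dots,7\}=\coprod S_a$ by mutual orthogonality, show each $|S_a|\le 3$ by a sub-analysis of long vs.\ short roots and the impossibility $|S_a|=4$ yielding all short roots $\pm e'_i\pm e'_j$ present, then exhaust the remaining partition $3+3+1$ and produce two orbit elements of $v$ with different $\|\mathrm{pr}_\mathfrak{m}\|$). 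This combinatorial $\mathfrak{f}_4$ elimination is where essentially all the work lies; every other branch closes up quickly once the preliminary lemmas of Section 3 are in place.
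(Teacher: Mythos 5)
Your outline reproduces the paper's strategy step for step: the reduction via Proposition \ref{prop-3-2} to $v=e_1+\cdots+e_n$ or $v=e_1$ (with triality disposing of $n=4$), the centralizer reduction through Lemma \ref{trivial-lemma} for $v=e_1+\cdots+e_n$, and for $v=e_1$ the same three structural lemmas followed by the $\mathfrak{g}_2$, $\mathfrak{b}_h/\mathfrak{c}_h$ and $\mathfrak{f}_4$ endgame. But two of your sub-steps would not go through as described. In the distinct-roots lemma you propose to reach a contradiction \emph{inside} the subalgebra over a coincidence class $\mathrm{pr}_{\mathfrak{h}}^{-1}(e'_1)\supset\{e_1,\dots,e_k\}$ by diagonalising generators and conjugating $v$ so that it lands entirely in $\mathfrak{m}$, in the style of the $\mathfrak{c}_n$ computation. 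Here the reduced pair is $\mathfrak{g}'\cong\mathfrak{so}(2k)$ with $\mathfrak{h}'=\mathbb{R}e'_1+\mathfrak{h}_{\pm 2e'_1}\cong\mathfrak{a}_1$, and in the critical case $k=2$ (the only case surviving Lemma \ref{lemma-3-3}, which forces $2k-1=3$) one has $\mathfrak{h}'=\mathbb{R}(e_1+e_2)+\mathfrak{g}_{\pm(e_1+e_2)}$, i.e.\ an $\mathfrak{su}(2)$ \emph{ideal} of $\mathfrak{so}(4)$. Then $\|\mathrm{pr}_{\mathfrak{h}'}(\mathrm{Ad}(g')v)\|$ is automatically constant on the $\mathrm{Ad}(G')$-orbit, so no orbit element lies in $\mathfrak{m}'$ and no contradiction can be produced inside $G'$. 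The paper closes this case by going back to the full algebra and comparing the ratio $\dim\mathfrak{t}/\dim(\mathfrak{t}\cap\mathfrak{h})=2$ with the ratio $4$ obtained from $\widehat{\mathfrak{g}}_{\pm(e_1+e_3)}+\widehat{\mathfrak{g}}_{\pm(e_1-e_3)}$ (dimension $8$, of which only $2$ lie in $\mathfrak{h}$); some global step of this kind is missing from your sketch.

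Second, in the $\mathfrak{f}_4$ case with $\langle e'_1,e'_2\rangle=\pm\tfrac13$, the claim that the $5\times5$ Gram matrix of the $e'_i$ is non-singular, contradicting $\dim(\mathfrak{t}\cap\mathfrak{h})=4$, cannot stand on its own: since $\dim(\mathfrak{t}\cap\mathfrak{h})=4$, the Gram matrix of any five of the $e'_i$ \emph{must} be singular, and the off-diagonal signs $\pm\tfrac13$ are not determined a priori. The non-singularity computation only becomes available after assuming that some $e'_1-e'_i$ with $i>2$ is a long root of $\mathfrak{h}$ (which pins down the signs), so it merely rules out long roots of that form (and then of the form $e'_2-e'_i$); the subcase is finished by counting the long roots of the form $\pm e'_i\pm e'_j$, which is at most $22<24$. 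Your compression skips this counting step, so as written the $\pm\tfrac13$ branch is not eliminated. Apart from these two points, your proposal follows the paper's proof.
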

\smallskip

Theorem \ref{main} follows by combining
Propositions \ref{easy-no},
\ref{proposition-4-1}, \ref{prop-5-3}, \ref{prop-S7}, \ref{no-cn}
and \ref{prop-dn}.

\end{document}